\pgfplotsset{compat=1.9}
\newcommand{\tikloglogpic}[2]{%
  \begin{tikzpicture}
    \begin{loglogaxis}[#1]
      #2
    \end{loglogaxis}
\end{tikzpicture}}
\newcommand{\norm}[1]{\|#1\|}
\newcommand{\set}[1]{\{#1\}}
\newcommand{\abs}[1]{|#1|}
\newcommand{\ahalf}{{\frac{1}{2}}}
\newenvironment{keywords}%
   {\begin{trivlist}\item[]{\bfseries\sffamily Keywords:}\ }
   {\end{trivlist}}
\newtheorem{lemma}{Lemma}
\newtheorem{definition}[lemma]{Definition}
\newtheorem{theorem}[lemma]{Theorem}
\newtheorem{remark}[lemma]{Remark}
\newtheorem{assumption}[lemma]{Assumption}
\renewcommand{\div}{\operatorname{div}}
\newcommand\restrtmp[3][]{{
  #2 {#1|}_{#3}
}}
\newcommand\restr[3][-1]{{ 
  \ifcase#1\relax
    \restrtmp{#2}{#3}\or         
    \restrtmp[\big]{#2}{#3}\or   
    \restrtmp[\Big]{#2}{#3}\or   
    \restrtmp[\bigg]{#2}{#3}\or  
    \restrtmp[\Bigg]{#2}{#3}     
  \else
	\left.\kern-\nulldelimiterspace #2\vphantom{|} \right|_{#3}
  \fi
}}
\newcommand{\dif}{\mathop{}\!\mathrm{d}}
\newcommand{\euler}{\mathrm{e}}
\newcommand{\ik}{\mathfrak i_k}
\newcommand{\ak}{\mathfrak a_k}
\newcommand{\mk}{\mathfrak m_k}
\newcommand{\grid}{\mathds{I}}
\newcommand{\indicatorfunc}{\mathds{1}}
\newcommand{\oCdiff}{N}
\newcommand\sH[1]{H^{#1}}
\newcommand\sHz[1]{H_0^{#1}} 
\newcommand\sL[1]{L^{#1}}
\newcommand\sLd[1]{L^{#1}_\sigma}
\newcommand\sLa[1]{L_0^{#1}} 
\newcommand\sLJ[1]{L_{\!J}^{#1}} 
\newcommand\sLJp[1]{L_{\!J'}^{#1}} 
\newcommand\sW[1]{W^{#1}} 
\newcommand\sV[1]{V^{#1}} 
\newcommand\sQ[1]{Q^{#1}} 
\newcommand\sC[1]{C^{#1}} 
\newcommand\sCJ[1]{C_{\!J}^{#1}} 
\newcommand\sCJp[1]{C_{\!J'}^{#1}} 
\DeclareDocumentCommand{\sCcd}{o}{ 
\IfValueTF{#1}{C_{0, \sigma}^{#1}}{C_{0, \sigma}}
}
\newcommand\sP[1]{P^{#1}} 
\newcommand\sdG[1]{dG^{#1}} 
\newcommand\scG[1]{cG^{#1}} 
\newcommand\vars{\sigma} 
\DeclareDocumentCommand{\spL}{ s m }{
  \IfBooleanTF #1
  {\left(#2\right)}
  {(#2)}
}
\newcommand\spTS[1]{(\mkern-4mu(#1)\mkern-4mu)}
\newcommand\spLL[1]{\spTS{#1}} 
\DeclareDocumentCommand{\nS}{ s m }{
  \IfBooleanTF #1
	{\norm[2]{#2}}
	{\norm{#2}}
}
\newcommand\nTS[1]{\norm{#1}} 
\DeclareDocumentCommand{\nHH}{ o o m }{
\IfValueTF{#1}
  {\IfValueTF{#2} 
    {\nTS{#3}_{#1, #2}}
    {\nTS{#3}_{#1}}
  }
  {\nTS{#3}}
}
\DeclareMathOperator\Id{Id}
\newcommand\Cas[1]{C_{\hyperref[assumption:#1]{A\ref*{assumption:#1}}}}
\title{Second order pressure estimates for the Crank-Nicolson
  discretization of the incompressible Navier-Stokes Equations}
\author{Florian Sonner
\thanks{Universit\"at Erlangen-N\"urnberg, Erlangen, Germany (\url{florian.sonner@math.fau.de}).}
\and
Thomas Richter
\thanks{Universit\"at Magdeburg, Magdeburg and Interdisciplinary Center
  for Scientific Computing, Heidelberg, Germany,
(\url{thomas.richter@ovgu.de}).}}
\begin{document}

\maketitle

\begin{abstract}
	We provide optimal order pressure error estimates for the
        Crank-Nicolson semidiscretization  of the incompressible
        Navier-Stokes equations. Second order estimates for the
        velocity error are long known, we prove that the pressure
        error is of the same order if considered at interval
        midpoints, confirming previous numerical evidence. For
        simplicity we first give a proof under high regularity
        assumptions that include nonlocal compatibility conditions for
        the initial data, then use smoothing techniques for a proof
        under reduced assumptions based on standard local conditions
        only.

\end{abstract}
\begin{keywords}  incompressible Navier-Stokes equations,
  Crank-Nicolson, error estimates 
\end{keywords}

\section{Introduction}

We consider the Crank-Nicolson timestepping scheme for the Stokes and Navier-Stokes equations as described in the seminal paper by Heywood and Rannacher~\cite{HeywoodRannacher1990}, where optimal velocity error estimates were proven under weak regularity assumptions. Due to its implicit occurrence, the pressure error is only linear in time at the timesteps. Numerically it is well-known  that quadratic convergence can be recovered using the midpoint values of the pressure, see for example Reusken and Esser~\cite{ReuskenEsser2013}, Rank~\cite{Rang2008} or Hussain, Schieweck and Turek~\cite{HussainSchieweckTurek2013} who also consider higher order schemes. 

The main result of this paper is a proof of this result: If $p_k$ denotes the semidiscrete and $p$ the continuous pressure, we prove in section~\ref{sec:high-reg} the temporal $\sL2$-estimate
\[ \nTS{\ak p - p_k}_{\sL2(0, T; \sH1(\Omega))} \leq C k^2, \]
where $\ak p$ is an average over $p$, and the $\sL\infty$-estimate
\[ \nTS{\mk p - p_k}_{\sL\infty(0, T; \sH1(\Omega))} \leq C k^2 \]
where $\mk p$ denotes the midpoint values of $p$, for details on the notation see below. These results hold for solutions of both the Stokes and Navier-Stokes equations. Comparable are temporal $\sL2$-estimates by De Frutos, Garci\'ia-Archilla, John and Novo~\cite{deFrutosArchillaJohnNovo2016} for the Oseen problem in the fully discrete setting. In~\cite{deFrutosArchillaJohnNovo2018} these results are extended to the Navier-Stokes equations including optimal order estimates for the pressure in the interval midpoints. The evaluation of the adjoint variables in the midpoints by Meidner and Vexler~\cite{MeidnerVexler2011} for parabolic optimal control problems is of similar spirit. Rang~\cite{Rang2008} shows second order pressure convergence for a variant of the Crank-Nicolson timestepping scheme applied to the Navier-Stokes equations, where the pressure is treated like the velocity and split into explicit and implicit part. 

The previous results assume highly regular solutions which only exist if the initial data satisfies nonlocal compatibility conditions. Similar to Heywood and Rannacher~\cite{HeywoodRannacher1990} we reduce the assumptions on the data in section~\ref{sec:low-reg} and use the smoothing properties of the solution operator: Replacing the first timestep by an implicit Euler step, we have the $\sL2$-estimate
\[ \nTS{\tau_k^{\frac 3 2} (\ak p - p_k)}_{\sL2(t_1, T; \sH1(\Omega))} \leq C k^2 \]
where $\tau_k$ is a discretization of the continuous smoothing function $\tau(t) \coloneqq \min(1, t)$, and replacing the first two steps by implicit Euler steps we have the $\sL\infty$-estimate
\[ \nTS{\tau_k^2 (\mk p - p_k)}_{\sL\infty(t_2, T; \sH1(\Omega))} \leq C k^2.\]
These results are derived using energy techniques and technically involved, but based on the simple observation that the discrete smoothing creates jump terms which require a cascade of estimates due to the weak bounds available for the Crank-Nicolson scheme.

In section~\ref{sec:num} we present a numerical study illustrating the optimality of the error estimates and the necessity to consider both a weighted norm and initial Euler steps, if the initial data does not satisfy the compatibility conditions. 

We interpret the Crank-Nicolson scheme as a discontinuous Petrov-Galerkin in time method, extending a construction by Aziz and Monk~\cite{AzizMonk1989}: Velocities are approximated using continuous, piecewise-linear functions, the pressure with discontinuous, piecewise-constant functions. The test functions for both velocity and pressure are discontinuous, piecewise-constant in time. This mismatch between velocity test and ansatz spaces is central to the a priori analysis of the problem. Further analysis of Petrov-Galerkin in time methods can be found in Schieweck~\cite{Schieweck2010} for parabolic equations, of related interest are also results by Chrysafinos and Karatzas~\cite{ChrysafinosKaratzas2015} for discontinuous Galerkin methods applied to the Stokes equations, where best approximation results for the velocity error are derived.

\subsection{General Notation}
In the following let $\Omega \subseteq \mathds{R}^d$ with $d \in \set{2, 3}$ be a bounded domain with regularity described later on. Let $I \coloneqq [0, T)$ with $T > 0$ denote the finite time interval on which a solution is sought.	We write $\spL{\,\cdot\,, \,\cdot\,}$ for the scalar product on $\sL2(\Omega)$ but also for the duality product on a generic Banach space if no confusion is possible. For spatial norms we omit the domain $\Omega$ and write e.g.\ $\nS{\cdot}_{\sL{p}} \coloneqq \nS{\cdot}_{\sL{p}(\Omega)}$. Functions in $\sLa2(\Omega)$ have zero average and those in $\sLd2(\Omega)$ are solenoidal, i.e.\ have zero (weak) divergence. We write $P$ for the Helmholtz projection. 

We use the Bochner spaces $\sL{p}(0, T; X)$, $\sW{m, p}(0, T; X)$ and $\sH{m}(0, T; X)$ for a Banach space $X$, $m \in \mathbb N$ and $1 \leq p \leq \infty$. For $J \subset I$ we write $\spLL{\cdot, \cdot}_J$ for the scalar product on $\sL2(J, \sL2(\Omega))$, but also for the duality product on $\sL2(J, X)$ if no confusion is possible. We write $\nHH{\cdot}_J$ for the norm on $\sL2(J, \sL2(\Omega))$ and omit $J$ in both norm and scalar product if $J = I$. For general Bochner spaces we abbreviate $\nTS{\cdot}_{\sLJ{p} X} \coloneqq \nTS{\cdot}_{\sL{p}(J, X)}$ and again omit $J$ if $J = I$. 

The natural spaces of velocity regularity are induced by the Stokes operator $-P \Delta$, see \cite{sohr2001} for details: Since $\Omega$ will have at least a $C^2$-boundary, the Stokes operator $-P \Delta\colon \mathcal D(-P \Delta) \subset \sLd2(\Omega) \to \sLd2(\Omega)$ has domain $\mathcal D(-P\Delta) = \sLd2(\Omega) \cap \sHz1(\Omega) \cap \sH2(\Omega)$. Furthermore, $- P \Delta$ is positive, selfadjoint, compact and has a bounded inverse. In particular we may define $\sV{s} \coloneqq \mathcal D((-P \Delta)^{\frac s 2})$ for $s \geq 0$ with graph norm $\norm{\cdot}_{\sV{s}}$ and $\sV{-s} \coloneqq (\sV{s})'$. Then $\sV0 \cong \sLd2(\Omega)$ and $\sV{s} \cong \sLd2(\Omega) \cap \sHz1(\Omega) \cap \sH{s}(\Omega)$ for $s \in \mathbb N$. For $u, v \in \sV1$ there holds $\spL{\nabla u, \nabla v} = \spL{(-P\Delta)^{\frac 12} u, (- P \Delta)^{\frac 12} v}$. For brevity we use the notation $\sQ{s} \coloneqq \sLa2(\Omega) \cap \sH{s}(\Omega)$ for the pressure regularity spaces with the $\sH{s}(\Omega)$-norm.

We denote the identity operator by $\Id$ and the indicator function on $J$ by $\indicatorfunc_J$. For a real number $a \in \mathbb R$ we write $(a)^+ \coloneqq \max\set{0, a}$ and $(a)^- \coloneqq \min\set{0, a}$. We denote by $C > 0$ a generic constant independent of the timestep size which may change with each occurrence.

\subsection{Discrete Spaces and Operators}

Let $X$ be a real Hilbert space.

\begin{definition}
	Let $0 = t_0 < t_1 < \dots < t_N = T$. We call $\grid_k = \set{ I^1, \dots, I^N }$ with $I^n \coloneqq (t_{n-1}, t_n]$ for $n = 1, \dots, N$ a \emph{discretization of $I$ with nodes $(t_n)_{n = 0}^N$}. Let $k_n \coloneqq \abs{I^n}$, $k \coloneqq \max_{n = 1, \ldots, N} k_n$ and denote the midpoints by $t_{n-\ahalf} \coloneqq \frac 12 (t_{n-1} + t_n)$ for $n = 1, \dots, N$.
\end{definition}

For the remainder of the paper $\grid_k$ is fixed and satisfies, for the Stokes equations,\ldots

\begin{assumption}\label{as:time1}
	There exists $\kappa > 0$ such that for $n = 1, \dots, N-1$ there holds $\kappa^{-1} \le \frac{k_n}{k_{n+1}} \le \kappa$. The generic constants $C > 0$ may depend on $\kappa$.
\end{assumption}

\ldots and for the Navier-Stokes equations the stronger\ldots

\begin{assumption}\label{as:time2}
	There exists $\rho \geq 1$ such that $\max_{n = 1, \ldots, N} k_n \leq \rho \min_{n = 1, \ldots, N} k_n$. The generic constants $C > 0$ may depend on $\rho$.
\end{assumption}

\begin{definition}
	We define the \emph{continuous and discontinuous Galerkin spaces on $X$ of degree $r \in \mathbb N_0$} as
	\begin{alignat*}{2}
		\scG{r}(\grid_k, X) &\coloneqq \{ f_k \in \sC0([0, T], X) &\;\,\mid\,\;& \restr{f_k}{I^n} \in \sP{r}(I^n, X) \; \forall I^n \in \grid_k \},\\
		\sdG{r}(\grid_k, X) &\coloneqq \{ f_k : (0, T] \to X &\;\,\mid\;\,& \restr{f_k}{I^n} \in \sP{r}(I^n, X) \; \forall I^n \in \grid_k \},
	\end{alignat*}
	where $\sP{r}(I^n, X)$ is the space of polynomials of degree $r$ with values in $X$ on $I^n$. We equip $\sdG{r}(\grid_k, X)$ with the $\sL2(0, T; X)$-topology.
\end{definition}

\begin{definition}
	For $u\colon I \to X$ sufficiently regular, we define
	\begin{itemize}
		\item 
			the \emph{nodal interpolation operator} 
			\[ \ik u \in \scG{1}(\grid_k, X), \quad (\ik u)(t_n) \coloneqq u(t_n) \quad\text{for $n = 0, \dots, N$,} \]
		\item
			the \emph{$\sL2$-projection} onto $\sdG0(\grid_k, X)$, which corresponds to \emph{averaging}
			\[ \ak u \in \sdG{0}(\grid_k, X), \quad \restr{\ak u}{I^n} \coloneqq \frac{1}{k_n} \int_{I^n} u(t) \dif t, \quad \text{for $n = 1, \ldots, N$,} \]
		\item
			the \emph{constant continuation of midpoint values} 
			\[ \mk u \in \sdG{0}(\grid_k, X), \quad \restr{\mk u}{I^n} \coloneqq u(t_{n-\ahalf}) \quad \text{for $n = 1, \dots, N$.} \]
	\end{itemize}
\end{definition}

\begin{remark}
	The operator $\ik$ is used for the velocity and the operator $\mk$ for the pressure error estimate. The averaging operator $\ak$ occurs naturally in a-priori estimates for the Crank-Nicolson scheme, see remark~\ref{rm:dual-space}. 
\end{remark}

\begin{lemma}\label{th:nodal-int-properties}
	Let $2 \leq p \leq \infty$ and $m \in \set{1,2}$. For $u \in \sW{m,p}(0, T; X)$ there holds
	\[ \nTS{u - \ik u}_{\sL{p} X} \le C k^m \nTS{\partial_t^m u}_{\sL{p} X}. \]
\end{lemma}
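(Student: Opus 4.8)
The plan is to prove the estimate element by element and then assemble the global bound by summation. Fix an interval $I^n = (t_{n-1}, t_n]$ of length $k_n$ and set $e \coloneqq u - \ik u$. Since $\ik$ reproduces affine functions exactly, the error annihilates polynomials of degree $\le 1$, and the cleanest way to exploit this for $X$-valued $u$ — avoiding a vector-valued Bramble–Hilbert argument — is an explicit integral representation of $e$ whose kernel is scalar and hence acts on the $X$-valued integrand without difficulty. Once a local bound $\nTS{e}_{\sL{p}(I^n, X)} \le C k_n^m \nTS{\partial_t^m u}_{\sL{p}(I^n, X)}$ is in hand, the claimed estimate follows routinely: for $p < \infty$ one raises to the $p$-th power, sums over $n$, and uses $k_n \le k$; for $p = \infty$ one takes the maximum over $n$. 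Note that the embedding $\sW{1,p}(0,T;X) \hookrightarrow \sC0([0,T], X)$ ensures that the nodal values defining $\ik u$ exist in the first place.

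For $m = 1$ I would argue by the fundamental theorem of calculus. On $I^n$ the interpolant $\ik u$ is affine with $\partial_t \ik u = \frac{1}{k_n}\int_{I^n}\partial_t u$, so $\partial_t e = \partial_t u - \frac{1}{k_n}\int_{I^n}\partial_t u$ has vanishing mean, and since $e(t_{n-1}) = 0$ we obtain $e(t) = \int_{t_{n-1}}^t \partial_t e(s)\dif s$. This yields $\nTS{e}_{\sL\infty(I^n, X)} \le \nTS{\partial_t e}_{\sL1(I^n, X)} \le 2 \nTS{\partial_t u}_{\sL1(I^n, X)}$, where the factor $2$ comes from bounding the mean in $\sL1$ by $\partial_t u$ itself. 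Two applications of Hölder's inequality — first to pass from the $\sL1$ bound on $\partial_t u$ to an $\sL{p}$ bound, then from the $\sL\infty$ bound on $e$ to an $\sL{p}$ bound — supply the factor $k_n$ and give the local estimate.

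For $m = 2$ I would use the Green's function of the two-point boundary value problem. Because $\ik u$ is affine, $\partial_t^2 e = \partial_t^2 u$ on $I^n$, and $e$ vanishes at both endpoints, so $e(t) = \int_{I^n} G_n(t, s)\, \partial_t^2 u(s) \dif s$ with $G_n$ the Green's function of the Dirichlet problem on $I^n$. This kernel satisfies the uniform bounds $\sup_t \int_{I^n} \abs{G_n(t,s)}\dif s \le \frac{k_n^2}{8}$ and, by symmetry, $\sup_s \int_{I^n}\abs{G_n(t,s)}\dif t \le \frac{k_n^2}{8}$, so the Schur test shows the associated integral operator is bounded on $\sL{p}(I^n, X)$ for every $1 \le p \le \infty$ with norm at most $\frac{k_n^2}{8}$, giving $\nTS{e}_{\sL{p}(I^n, X)} \le \frac{k_n^2}{8}\nTS{\partial_t^2 u}_{\sL{p}(I^n, X)}$. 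The main point to get right, and the reason I prefer these kernel representations over a scaling-plus-Bramble–Hilbert route, is obtaining a bound that is simultaneously valid across the full range $2 \le p \le \infty$ and for general Banach-space-valued $u$; the scalar kernel together with the Schur test dispatches both issues at once, while everything else reduces to the summation over $n$ described above.
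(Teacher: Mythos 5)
Your argument is correct, but it takes a different route from the paper, whose entire proof is the single sentence ``Follows by reference transformation techniques'' --- i.e.\ the standard map of $I^n$ onto a fixed reference interval, an estimate there exploiting that $u - \ik u$ annihilates affine functions (Bramble--Hilbert or a Peano-kernel computation), and a scaling back that produces the factor $k_n^m$. You deliberately avoid the reference transformation and instead work with explicit scalar kernels directly on $I^n$: the fundamental theorem of calculus combined with the mean-value property of $\partial_t \ik u$ for $m=1$, and the Dirichlet Green's function of $\partial_t^2$ together with the Schur test for $m=2$; the local-to-global summation and the $p=\infty$ case are handled correctly, and the observation that $\sW{1,p}(0,T;X)\hookrightarrow \sC0([0,T],X)$ justifies the nodal values. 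What your version buys is full explicitness --- concrete constants ($2$ and $\tfrac18$), a bound uniform over the whole range $2\le p\le\infty$ obtained in one stroke, and no appeal to a vector-valued Bramble--Hilbert lemma, since the scalar kernel acts on $\norm{\partial_t^m u(\cdot)}_X$ directly. What the paper's approach buys is brevity and immediate generalization to higher polynomial degrees and other interpolation operators (indeed the same phrase is reused for lemma~\ref{th:nodal-midpoint-estimates}), at the cost of leaving the vector-valued details implicit. Both proofs are valid.
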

\begin{proof}
	Follows by reference transformation techniques.
\end{proof}

\begin{lemma}\label{th:midpoint-quadratic-estimate}
	For $u \in \sW{2,\infty}(0, T; X)$ there holds
	\[ \nTS{\ak u - \mk u}_{\sL\infty X} \le C k^2 \nTS{\partial_{tt} u}_{\sL\infty X}. \]
\end{lemma}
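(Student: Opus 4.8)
The plan is to reduce the global $\sL\infty$-bound to a uniform per-interval estimate and to exploit that $t_{n-\ahalf}$ is the \emph{exact} midpoint of $I^n$. Since $\ak u$ and $\mk u$ are both piecewise constant on $\grid_k$, on each $I^n$ their difference is the constant
\[ \restr{(\ak u - \mk u)}{I^n} = \frac{1}{k_n}\int_{I^n} \big(u(t) - u(t_{n-\ahalf})\big) \dif t, \]
and it suffices to bound the $X$-norm of this constant by $C k_n^2 \nTS{\partial_{tt}u}_{\sL\infty X}$ for each $n = 1, \dots, N$; taking the maximum over $n$ and using $k_n \le k$ then yields the claimed $\sL\infty$-estimate.

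The key idea is a symmetric folding that annihilates the first-order contribution. Substituting $t = t_{n-\ahalf} + s$ and pairing $s$ with $-s$, I would rewrite the right-hand side as
\[ \frac{1}{k_n}\int_0^{k_n/2}\big(u(t_{n-\ahalf}+s) + u(t_{n-\ahalf}-s) - 2u(t_{n-\ahalf})\big)\dif s. \]
The integrand is the second symmetric difference of $u$ at $t_{n-\ahalf}$; for $u \in \sW{2,\infty}(0, T; X)$ it can be written as a twofold integral of $\partial_{tt}u$, giving the pointwise bound $\le s^2 \nTS{\partial_{tt}u}_{\sL\infty X}$. Inserting this and using the Bochner inequality $\nTS{\int f}_X \le \int \nTS{f}_X$ leads to
\[ \Bigl\| \restr{(\ak u - \mk u)}{I^n} \Bigr\|_X \le \frac{1}{k_n}\int_0^{k_n/2} s^2 \dif s\; \nTS{\partial_{tt}u}_{\sL\infty X} = \frac{k_n^2}{24}\, \nTS{\partial_{tt}u}_{\sL\infty X}, \]
which is the required per-interval bound.

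The only point needing care is justifying the Bochner-valued expansion. Because $u \in \sW{2,\infty}(0, T; X)$ agrees almost everywhere with a $\sC1$ function whose derivative $\partial_t u$ is Lipschitz, the fundamental theorem of calculus is available in the Bochner sense and $\partial_{tt}u$ exists a.e.\ with $\nTS{\partial_{tt}u}_{\sL\infty X} < \infty$; this justifies the second-difference identity above and, equivalently, the Taylor formula $u(t) = u(t_{n-\ahalf}) + (t - t_{n-\ahalf})\partial_t u(t_{n-\ahalf}) + \int_{t_{n-\ahalf}}^t (t-\tau)\partial_{tt}u(\tau)\dif\tau$, in which the linear term integrates to zero over the symmetric interval $I^n$ and the remainder is controlled by $\tfrac12 (t-t_{n-\ahalf})^2 \nTS{\partial_{tt}u}_{\sL\infty X}$. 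I expect no genuine obstacle beyond this: the statement is essentially the classical midpoint-rule error bound transported to the Bochner setting, and the precise value of the constant is immaterial for the result.
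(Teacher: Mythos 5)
Your proposal is correct and follows essentially the same route as the paper: a Taylor expansion (equivalently, the second symmetric difference) of $u$ about the interval midpoint $t_{n-\ahalf}$, cancellation of the first-order term by symmetry of $I^n$, and an integral-remainder bound of order $k_n^2$ on each interval. The per-interval constant $\tfrac{1}{24}$ and the reduction to a maximum over $n$ are consistent with the paper's argument.
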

\begin{proof}
	For each $I^n = (t_{n-1}, t_n] \in \grid_k$ we have, by Taylor expansion around $t_{n-\ahalf}$:
	\begin{align*}
		\int_{I^n} u(t) - u(t_{n-\frac 12}) \dif t &= \partial_t u(t_{n-\ahalf}) \int_{I^n} t - t_{n-\ahalf} \dif t + \int_{I^n} \int_{t_{n-\ahalf}}^t (t-s) \partial_{tt} u(s) \dif s \dif t.
	\end{align*}
	The first term on the right vanishes and we can hence conclude
	\begin{equation*}
		\max_{I^n \in \grid_k} \norm{\restr{(\ak u - \mk u)}{I^n}}_X \leq C k^2 \nTS{\partial_{tt} u}_{\sL\infty X}.\qedhere
	\end{equation*}
\end{proof}

\begin{lemma}\label{th:nodal-midpoint-estimates}
	For $2 \leq p \leq \infty$ and $u \in \sW{1,p}(0, T; X)$ there holds:
	\[ \nTS{u - \ak \ik u}_{\sL{p} X} \le C k \nTS{\partial_t u}_{\sL{p} X}. \]
\end{lemma}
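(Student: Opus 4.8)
The plan is to evaluate $\ak \ik u$ explicitly and then reduce the estimate to the fundamental theorem of calculus on each subinterval. Since $\ik u$ is affine on every $I^n = (t_{n-1}, t_n]$, its average over $I^n$ equals its midpoint value, so $\restr{(\ak \ik u)}{I^n}$ is the constant $c_n \coloneqq \frac{1}{2}(u(t_{n-1}) + u(t_n))$. Writing $u(t) - c_n = \frac{1}{2}(u(t) - u(t_{n-1})) + \frac{1}{2}(u(t) - u(t_n))$ and using $u(t) - u(t_j) = \int_{t_j}^t \partial_t u(s) \dif s$ for both endpoints $t_j \in \set{t_{n-1}, t_n}$, I would obtain the pointwise bound
\[ \norm{u(t) - \restr{(\ak \ik u)}{I^n}}_X \leq \int_{I^n} \norm{\partial_t u(s)}_X \dif s \qquad \text{for all } t \in I^n. \]

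Next I would turn this into the $\sL{p}$ estimate on a single interval. Applying H\"older's inequality to the right-hand side gives $\int_{I^n} \norm{\partial_t u}_X \dif s \le k_n^{1 - 1/p} \nTS{\partial_t u}_{\sL{p}(I^n, X)}$, and since this bound is independent of $t$, integrating its $p$-th power over $I^n$ contributes an additional factor $k_n^{1/p}$. This yields $\nTS{u - \ak \ik u}_{\sL{p}(I^n, X)} \le k_n \nTS{\partial_t u}_{\sL{p}(I^n, X)}$, with the case $p = \infty$ following directly from $\int_{I^n} \norm{\partial_t u}_X \dif s \le k_n \nTS{\partial_t u}_{\sL\infty(I^n, X)}$.

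Finally I would sum over the intervals. Using $k_n \le k$ for every $n$ and summing the $p$-th powers (taking the maximum when $p = \infty$) gives
\[ \nTS{u - \ak \ik u}_{\sL{p} X}^p = \sum_{n=1}^N \nTS{u - \ak \ik u}_{\sL{p}(I^n, X)}^p \le k^p \sum_{n=1}^N \nTS{\partial_t u}_{\sL{p}(I^n, X)}^p = k^p \nTS{\partial_t u}_{\sL{p} X}^p, \]
so the claim holds with $C = 1$. The argument is essentially routine; the only point requiring care is to keep the interval-wise constants uniform, so that the summation produces exactly one power of $k$ rather than $k_n$-dependent factors. An alternative would be to split $u - \ak \ik u = (u - \ak u) + \ak(u - \ik u)$ and invoke Lemma~\ref{th:nodal-int-properties} together with the $\sL{p}$-stability of the averaging operator $\ak$, but the direct estimate above avoids needing a separate bound for $u - \ak u$.
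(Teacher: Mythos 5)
Your proof is correct, but it takes a more explicit route than the paper. The paper disposes of this lemma in one line: it observes that the operator $u \mapsto u - \ak \ik u$ vanishes on piecewise constant functions and then invokes the reference transformation technique together with the Bramble--Hilbert lemma, which delivers the factor $k$ by scaling. You instead compute $\restr{\ak \ik u}{I^n}$ explicitly as the endpoint average $\frac{1}{2}(u(t_{n-1})+u(t_n))$, bound $u(t) - c_n$ via the fundamental theorem of calculus applied to both endpoints, and carry the H\"older exponents through by hand. Both arguments rest on the same structural fact --- that $\ak\ik$ reproduces constants; your version makes this implicit in the explicit formula, while the paper makes it the hypothesis of an abstract lemma. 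What the paper's approach buys is brevity and immediate generalization (the same two-sentence argument covers lemma~\ref{th:nodal-int-properties} with $m\in\{1,2\}$ and the weighted variants of remark~\ref{rm:approx-weights}); what yours buys is a fully self-contained elementary proof with the explicit constant $C=1$ and no appeal to Bochner-valued Bramble--Hilbert machinery. The only point worth making explicit is that $u \in \sW{1,p}(0,T;X)$ has an absolutely continuous representative, so the pointwise evaluations $u(t_{n-1})$, $u(t_n)$ and the identity $u(t)-u(t_j)=\int_{t_j}^{t}\partial_t u(s)\dif s$ are legitimate; this is standard and does not affect the validity of your argument.
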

\begin{proof}
	Follows from the reference transformation technique and the Bramble-Hilbert lemma, since $u - \ak \ik u$ vanishes for piecewise constant functions.
\end{proof}

\begin{remark}\label{rm:approx-weights}
	In the second part of this paper we apply the estimates of this section to $\sL{p}$-spaces equipped with a discrete weight function $\tau_k^\alpha \in \sdG0(\grid_k, \mathbb R_+)$. Since $\restr{\tau_k^\alpha}{I^n} \equiv \text{const}$ the weighting does not affect the error estimates, such that e.g.\ lemma~\ref{th:nodal-int-properties} reads
	\[ \nTS{\tau_k^\alpha (u - \ik u)}_{\sL{p} X} \leq C k^m \nTS{\tau_k^\alpha \partial_t^m u}_{\sL{p} X}. \]
\end{remark}

\section{Estimates with High Regularity}\label{sec:high-reg}

\subsection{Stokes Equations}

Let $u$ and $p$ denote a solution of the Stokes equations
\begin{equation}\label{eq:st}
	\partial_t u - \Delta u + \nabla p = f, \quad \div u = 0 \qquad \text{in $\Omega$},
\end{equation}
with $u(0) = u^0$, $\restr[0]{u}{\partial\Omega} = 0$ and $\int_\Omega p = 0$.
In this section, we require solutions of high regularity and hence the following assumptions on the problem data:
\begin{assumption}\label{assumption:stokes1}
  We assume that $\Omega$ has a $C^5$-boundary, the
  right hand side has regularity $f\in C^0(\bar I, \sH3(\Omega))\cap
  C^1(\bar I, \sH1(\Omega))\cap 
  \sH2(I, \sL2(\Omega))$ and that the initial data $u^0\in \sV5$ satisfies the
	compatibility conditions that $p^0$ and $q^0$ with
  \begin{equation}\label{comp:stokes1}
		\Delta p^0 = \nabla\cdot f(0), \quad \Delta q^0 = \nabla\cdot \partial_t f(0) \quad \text{ in }\Omega
  \end{equation}
  can be chosen such that
  \begin{equation}\label{comp:stokes2}
    \nabla p^0 = f(0)+\Delta u^0,\quad
    \nabla q^0 = \partial_t f(0)+\Delta f(0) + \Delta^2 u^0 - \Delta \nabla p^0 \quad \text{ on }\partial\Omega.
  \end{equation}
  We write
  \[
		\Cas{stokes1}:= C \left(
		\nTS{u^0}_{\sV5}
		+\nTS{f}_{C^0\sH3}
		+\nTS{f}_{C^1\sH1}
		+\nTS{f}_{\sH2\sL2} \right)
  \]
	where $C > 0$ is independent of the data and, by abuse of notation, may change with each occurrence of $\Cas{stokes1}$.
\end{assumption}
In~\cite[theorem 2.1 and theorem
  2.2]{Temam1982} it is shown that exactly under these conditions
there exists a solution to the Stokes equations that satisfies the
bound
\begin{equation}\label{reg:stokes}
	\nTS{u}_{\sC0 \sV5} + \nTS{u}_{\sC1 \sV3} + \nTS{u}_{\sC2\sV1} + \nTS{u}_{\sH2\sV2} + \nTS{p}_{\sH2\sQ1} \le
  \Cas{stokes1}. 
\end{equation}
The equations \eqref{comp:stokes1} and \eqref{comp:stokes2} require $p^0$ and $q^0$ to solve Poisson problems
with overdetermined boundary conditions. This makes assumption~\ref{assumption:stokes1} not only hard to check for given $u^0$ and $f$ but unlikely to hold. Nevertheless we will assume its validity 
throughout this section.  A more technical analysis without compatibility assumptions will be carried out in section~\ref{sec:low-reg}.

To formulate the timestepping scheme in dual spaces of $\sdG0$-functions we note:

\begin{remark}\label{rm:dual-space}
	Let $X$ be a Hilbert space. Since $\sdG0(\grid_k, X') \subset \sL2(I, X')$, we have both $\sL2(I, X') \cong \sL2(I, X)'$ and $\sdG0(\grid_k, X') \cong \sdG0(\grid_k, X)'$ and the diagram
	\begin{center}
		\vspace{1em}
		\begin{tikzcd}[column sep=6em,row sep=2.5em]
			\sdG0(\grid_k, X') \arrow[r, "\cong"', "\set{\phi_k\;\mapsto\;\spLL{\,\cdot\,, \phi_k}}"] \arrow[d, hook] \arrow[d, "\subset", hook] & \sdG0(\grid_k, X)'\\
			\sL2(I, X') \arrow[u, shift left=3, "\ak"] \arrow[r, "\cong", "\set{\phi\;\mapsto\;\spLL{\,\cdot\,, \phi}}"'] & \sL2(I, X)' \arrow[u, "{\left.\rule{0pt}{6pt}\,\cdot\, \right|_{\sdG0(\grid_k, X)}}"']
		\end{tikzcd}
		\vspace{1em}
	\end{center}
	commutes. We emphasize the occurrence of $\ak$ in this diagram. In particular, any $f \in \sL2(I, X')$ can be understood in $\sdG0(\grid_k, X)'$ through
	\begin{equation}\label{eq:dual-incl}
		\spLL{f, \phi_k} \coloneqq \int_I \spL{f(t), \phi_k(t)} \dif t = \int_I \spL{\ak f(t), \phi_k(t)} \dif t = \spLL{\ak f, \phi_k}
	\end{equation}
	for arbitrary $\phi_k \in \sdG0(\grid_k, X)$. For the dual norm we have
	\begin{equation*}
		\norm{f}_{\sdG0(\grid_k, X)'} \coloneqq \sup_{\phi_k \in \sdG0(\grid_k, X)} \frac{\int_I \spL{f(t), \phi_k(t)} \dif t}{\norm{\phi_k}_{\sL2 X}} = \norm{\ak f}_{\sL2 X'}.
	\end{equation*}	
\end{remark}

We can now give an abstract formulation of the Crank-Nicolson scheme as solution $u_k \in \scG{1}(\grid_k, \sV1)$ and $p_k \in \sdG{0}(\grid_k, \sQ0)$ of
\begin{equation}\label{eq:st:disc}
	\partial_t u_k - \Delta u_k + \nabla p_k = f \quad\text{in $\sdG0(\grid_k, \sHz1(\Omega))'$}
\end{equation}
with $u_k(0) = u^0$. The equation must be understood with left- and right-hand side interpreted as elements of $\sdG0(\grid_k, \sHz1(\Omega))'$ as in \eqref{eq:dual-incl}. Using $\ak$ we have more explicitly:
\begin{equation}\label{eq:st:disc-avg}
	\partial_t u_k - \Delta \ak u_k + \nabla p_k = \ak f,
\end{equation}
now as an equality of $\sdG0$-functions. With $u_k^n \coloneqq u_k(t_n)$ and $p_k^n \coloneqq \restr[0]{p_k}{I^n}$, the equation \eqref{eq:st:disc} / \eqref{eq:st:disc-avg} can be written as a classical timestepping scheme:
\begin{equation}\label{eq:st:disc-classical}
	u_k^n - u^{n-1}_k - \tfrac{k_n}{2} \Delta (u_k^n + u_k^{n-1}) + k_n \nabla p_k^n = \int_{I^n} f \dif t
\end{equation}
for $n = 1, \ldots, N$ and $u_k^0 \coloneqq u^0$. This is the usual Crank-Nicolson scheme with exact integration of the right-hand side. Equation \eqref{eq:st:disc} emphasizes the relation to the continuous problem with replaced time-discrete trial and test spaces and can be thought of as integral version of the pointwise-in-time scheme \eqref{eq:st:disc-classical}. Our error analysis is based on \eqref{eq:st:disc-avg} rather than \eqref{eq:st:disc-classical}, trading notational simplicity for, in our opinion, more transparent proofs since algebraic manipulations and choices of test functions can be identified as standard integration techniques, such as partial integration, and use of temporal projections.

\begin{remark}
	We emphasize the purely implicit occurrence of the pressure in \eqref{eq:st:disc-classical}. The character of the analysis changes significantly if $\frac{k_n}{2} \nabla (p_k^n + p_k^{n-1})$ is used as pressure term, which corresponds to \eqref{eq:st:disc} with $p_k \in \scG1(\grid_k, \sQ0)$ and leads more naturally to second-order pressure estimates. This approach is investigated by Rang~\cite{Rang2008} and requires the construction of an initial pressure $p^0$. 
\end{remark}

The following stability estimate is formulated in a generality only needed for section~\ref{sec:low-reg}. In this section only the cases $s = 1$ and $s = 2$ are used.

\begin{lemma}[Discrete stability]\label{th:st:disc-stab}
	Let $s \in \mathbb Z$. Let $v_k \in \scG1(\grid_k, \sV{s+1})$ and $r_k \in \sdG0(\grid_k, \sV{s-1})$ be such that 
	\begin{equation}\label{eq:disc-stab:diveq}
		\partial_t v_k - P \Delta v_k = r_k \; \text{in $\sdG0(\grid_k, \sV{1-s})'$} \quad \xLeftrightarrow{\;\eqref{eq:dual-incl}\;} \quad \partial_t v_k - P \Delta \ak v_k = r_k.
	\end{equation}
	Then we have the stability estimate
	\begin{equation} \label{eq:disc-stab:div}
		\nTS{v_k}_{\sL\infty \sV{s}} + \nTS{\partial_t v_k}_{\sL2\sV{s-1}} + \nTS{\ak v_k}_{\sL2\sV{s+1}} \le C \left( \nS{v_k^0}_{\sV{s}} + \nTS{r_k}_{\sL2 \sV{s-1}} \right).
	\end{equation}
\end{lemma}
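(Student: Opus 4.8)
The plan is to run the classical Crank--Nicolson energy argument directly on the averaged form \eqref{eq:disc-stab:diveq}, exploiting that the midpoint structure turns the bilinear term into an exact telescoping sum. Write $A := -P\Delta$, positive and selfadjoint, whose fractional powers $A^\alpha$ ($\alpha \in \mathbb R$) are defined via the spectral calculus and satisfy $\nTS{v}_{\sV{s}}$ equivalent to $\nTS{A^{s/2} v}$. Since $A^{(s-1)/2}$ commutes with the purely temporal operators $\partial_t$ and $\ak$ and maps $\sV{t} \to \sV{t-(s-1)}$ isomorphically, applying it to \eqref{eq:disc-stab:diveq} reduces the claim to the case $s=1$: setting $w_k := A^{(s-1)/2} v_k \in \scG1(\grid_k, \sV2)$ and $\tilde r_k := A^{(s-1)/2} r_k \in \sdG0(\grid_k, \sV0)$ yields $\partial_t w_k + A\ak w_k = \tilde r_k$, and the three target norms become $\nTS{w_k}_{\sL\infty \sV1}$, $\nTS{\partial_t w_k}_{\sL2\sV0}$, $\nTS{\ak w_k}_{\sL2\sV2}$, with data $\nTS{w_k^0}_{\sV1}$ and $\nTS{\tilde r_k}_{\sL2\sV0}$. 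Hence it suffices to prove the estimate for $s=1$.

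For $s=1$ the equation is an identity of $\sdG0$-functions in $\sV0$, so I simply take the $\sL2(\Omega)$-inner product with admissible $\sdG0(\grid_k,\sV0)$ test functions and integrate in time. The crux is that on each $I^n$ the piecewise-linear $v_k$ gives $\ak v_k|_{I^n} = \tfrac12(v_k^{n-1}+v_k^n)$ and $\partial_t v_k|_{I^n} = k_n^{-1}(v_k^n - v_k^{n-1})$, whence
\[ \int_{I^n} \spL{A\ak v_k, \partial_t v_k} \dif t = \tfrac12\left(\nTS{v_k^n}_{\sV1}^2 - \nTS{v_k^{n-1}}_{\sV1}^2\right), \]
the discrete analogue of $\tfrac12 \tfrac{\mathrm d}{\mathrm d t}\nTS{v_k}_{\sV1}^2$. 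This is the only place the midpoint structure enters, and it requires no restriction on the timestep ratios.

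First energy estimate: testing with $\partial_t v_k \in \sdG0(\grid_k, \sV0)$ and summing the telescoping identity from $n=1$ to an arbitrary $m$ gives $\sum_{n\le m}\int_{I^n}\nTS{\partial_t v_k}^2 + \tfrac12\nTS{v_k^m}_{\sV1}^2 = \tfrac12\nTS{v_k^0}_{\sV1}^2 + \sum_{n\le m}\int_{I^n}\spL{r_k, \partial_t v_k}$. Bounding the coupling by Cauchy--Schwarz and Young's inequality and absorbing the resulting $\partial_t v_k$ term controls both $\nTS{\partial_t v_k}_{\sL2\sV0}$ (take $m=N$) and $\max_m \nTS{v_k^m}_{\sV1}$; as $v_k$ is piecewise linear its $\sV1$-norm is convex in $t$, so $\nTS{v_k}_{\sL\infty\sV1} = \max_m\nTS{v_k^m}_{\sV1}$. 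Second energy estimate: testing with $A\ak v_k \in \sdG0(\grid_k, \sV0)$, the coupling term telescopes exactly as above while the diagonal term yields $\nTS{\ak v_k}_{\sL2\sV2}^2$, and absorbing $\spL{r_k, A\ak v_k}$ by Young's inequality controls $\nTS{\ak v_k}_{\sL2\sV2}$. Combining the two estimates and undoing the reduction $w_k = A^{(s-1)/2}v_k$ gives \eqref{eq:disc-stab:div}.

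I expect the main obstacle to be bookkeeping rather than conceptual: checking that the reduction by $A^{(s-1)/2}$ is legitimate (commuting the closed fractional power past $\ak$ and $\partial_t$, and confirming that $w_k$ and $\tilde r_k$ land in the stated discrete spaces, including for negative indices where $\sV{s-1}$ is a dual space), and verifying that the chosen test functions are genuine elements of $\sdG0(\grid_k, \sV0)$ so that the duality pairings in \eqref{eq:disc-stab:diveq} collapse to the plain $\sL2(\Omega)$-products used above. Once the telescoping identity is in place the energy manipulations are routine.
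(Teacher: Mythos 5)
Your proposal is correct and follows essentially the same route as the paper: the core is the identical energy identity obtained by testing with the Stokes operator applied to $v_k$ (your second test, with $A\ak v_k$, is exactly the paper's test function $(-P\Delta)^s v_k\indicatorfunc_J$ after inserting $\ak$ via the projection property), together with the telescoping of $\spLL{\partial_t v_k, A v_k}$. The only cosmetic differences are that you normalize to $s=1$ by applying $A^{(s-1)/2}$ instead of carrying $(-P\Delta)^s$ through, and that you recover $\nTS{\partial_t v_k}_{\sL2\sV{s-1}}$ by an additional test with $\partial_t v_k$, whereas the paper simply reads it off the equation by the triangle inequality.
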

\begin{proof}
	By assumption $(-P\Delta)^s v_k(t) \in \sV{1-s}$ for $t \in \bar I$. Hence \eqref{eq:disc-stab:diveq}, as element of $\sL2(I, \sV{1-s})'$, can be tested with $(-P\Delta)^s v_k \indicatorfunc_J$ where $J \coloneqq (0, t_n]$ for $n = 1, \ldots, N$:
	\begin{equation}\label{eq:th:st:disc-stab:tmp}
		\spLL{\partial_t v_k, (-P\Delta)^s v_k}_J + \spLL{
                  -P\Delta \ak v_k, (-P\Delta)^s v_k}_J = \spLL{r_k, (-P\Delta)^s v_k}_J.
	\end{equation}
	Partial integration in time of the continuous function $v_k$ implies
	\begin{equation*}
		\spLL{\partial_t v_k, (-P\Delta)^s v_k}_J = \tfrac 1 2 \left( \nS{v_k^n}_{\sV{s}}^2 - \nS{v_k^0}_{\sV{s}}^2 \right).
	\end{equation*}
	For the other term on the left of \eqref{eq:th:st:disc-stab:tmp} we use the projection property to insert $\ak$:
	\begin{equation*}
		\spLL{- P \Delta \ak v_k, (-P\Delta)^s v_k}_J = \spLL{- P \Delta \ak v_k, (-P\Delta)^s \ak v_k}_J = \nTS{\ak v_k}_{\sLJ2\sV{s+1}}^2. 
	\end{equation*}
	For the right-hand side of \eqref{eq:th:st:disc-stab:tmp} we again insert $\ak$ and estimate with Young's inequality 
	\begin{equation*}
		\spLL{r_k, (-P \Delta)^s v_k}_J = \spLL{r_k, (- P \Delta)^s \ak v_k}_J \leq \tfrac 12 \norm{r_k}_{\sL2 \sV{s-1}}^2 + \tfrac 1 2 \norm{\ak v_k}_{\sLJ2 \sV{s+1}}^2
	\end{equation*}
	and combination of the three previous estimates implies for \eqref{eq:th:st:disc-stab:tmp}:
	\begin{equation*}
		\norm{v_k^n}_{\sV{s}}^2 + \norm{\ak v_k}_{\sLJ2 \sV{s+1}}^2 \leq \norm{v_k^0}_{\sV{s}}^2 + \norm{r_k}_{\sL2 \sV{s-1}}^2.
	\end{equation*}
	Taking the maximum over $n = 1, \ldots, N$ we arrive at the estimates for $\nTS{v_k}_{\sL\infty \sV{s}}$ and $\nTS{\ak v_k}_{\sL2\sV{s+1}}$. For the remaining estimate of $\norm{\partial_t v_k}_{\sL2 \sV{s-1}}$ we have by \eqref{eq:disc-stab:diveq}
	\begin{align*}
		\norm{\partial_t v_k}_{\sL2 \sV{s-1}} &= \norm{r_k + P \Delta \ak v_k}_{\sL2 \sV{s-1}} \leq \norm{r_k}_{\sL2 \sV{s-1}} + \norm{P \Delta \ak v_k}_{\sL2 \sV{s-1}} \\
		&\leq \norm{r_k}_{\sL2 \sV{s-1}} + \norm{\ak v_k}_{\sL2 \sV{s+1}}
	\end{align*}
	and \eqref{eq:disc-stab:div} follows by our estimate for $\norm{\ak v_k}_{\sL2 \sV{s+1}}$.
\end{proof}

\begin{remark}
	We emphasize that in \eqref{eq:disc-stab:div} only $\norm{\ak v_k}_{\sL2 \sV{s+1}}$ can be controlled, i.e.\ the integral of the averages $\frac 12(v_k^{n-1} + v_k^n)$ for $n = 1, \ldots, N$, and not $\norm{v_k}_{\sL2 \sV{s+1}}$. This will be a central problem in section~\ref{sec:low-reg}.
\end{remark}

\begin{remark}\label{rm:cont-stab}
	Given a sufficiently regular continuous solution $v$ of
	\begin{equation*}	
		\partial_t v - P \Delta v = r \quad \text{in $\sL2(I, \sV{1-s})'$}
	\end{equation*}
	we can derive stability estimates by testing with $\phi = (-P\Delta)^s v \indicatorfunc_{(0, t]}$ for $t \in I$ a.e. For our time-discrete solution $v_k$ of 
	\begin{equation*}
		\partial_t v_k - P \Delta v_k = r_k \quad \text{in $\sdG0(\grid_k, \sV{1-s})'$},
	\end{equation*}
	we derived stability estimates by testing the equivalent formulation
	\begin{equation*}
		\partial_t v_k - P \Delta \ak v_k = r_k,
	\end{equation*}
	with $\phi_k = (-P\Delta)^s v_k \indicatorfunc_{(0, t_n]}$ for $n = 1, \ldots, N$. We note the similarity of the approaches and the fact that $\phi_k$ is not a valid test function in the first formulation of the time-discrete problem, but in the second one since $\sdG0(\grid_k, \sV{s-1}) \subset \sL2(I, \sV{s-1}) \cong \sL2(I, \sV{1-s})'$.
\end{remark}

With these preparations we can prove second order error estimates for both velocity and pressure based on the following error identities:

\begin{lemma}\label{th:st:error-id}
	For the solutions $(u, p)$ of the continuous and $(u_k, p_k)$ of the time-discrete Stokes problem we have the velocity error identity
	\begin{equation}\label{eq:st:error-u}
		\partial_t(u_k - \ik u) - P \Delta \ak (u_k - \ik u) = -P\Delta \ak(u - \ik u)
	\end{equation}
	and the pressure error identity
	\begin{equation}\label{eq:st:error-p} 
		\nabla(p_k - \ak p) = (P - \Id) \Delta \ak (u - u_k).
	\end{equation}
\end{lemma}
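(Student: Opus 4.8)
The plan is to derive both identities by writing the continuous equation \eqref{eq:st} and the discrete equation \eqref{eq:st:disc-avg} in a common $\sdG0$-form and subtracting. The linchpin is the commutation identity $\partial_t \ik u = \ak \partial_t u$: on each $I^n$ both sides equal the difference quotient $k_n^{-1}(u(t_n) - u(t_{n-1}))$, since $(\ik u)|_{I^n}$ is the affine function through the nodal values while $\ak$ integrates $\partial_t u$ exactly. Alongside this I would use that the spatial operators $P$, $\Delta$ and $\nabla$ all commute with the temporal averaging $\ak$, and that $P\partial_t u = \partial_t u$, $P\partial_t u_k = \partial_t u_k$ and $P\nabla(\,\cdot\,) = 0$, because $u$ and $u_k$ are solenoidal and gradients lie in the orthogonal complement of the range of the Helmholtz projection.

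For the velocity identity, apply $P$ to \eqref{eq:st} to obtain $\partial_t u - P\Delta u = Pf$, then apply $\ak$ and use $\ak\partial_t u = \partial_t \ik u$ together with the commutations to get $\partial_t \ik u - P\Delta \ak u = P \ak f$. Applying $P$ to the discrete equation \eqref{eq:st:disc-avg}, where $\nabla p_k$ is annihilated, gives $\partial_t u_k - P\Delta\ak u_k = P\ak f$. Subtracting the two eliminates the common right-hand side and yields $\partial_t(u_k - \ik u) - P\Delta\ak(u_k - u) = 0$. Rewriting $\ak(u_k - u) = \ak(u_k - \ik u) - \ak(u - \ik u)$ moves the first piece into the stated form and leaves exactly $-P\Delta\ak(u - \ik u)$ on the right, which is \eqref{eq:st:error-u}.

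For the pressure identity I would avoid re-deriving anything: I subtract the $\ak$-averaged continuous equation in unprojected form, $\partial_t \ik u - \Delta\ak u + \nabla\ak p = \ak f$, from the discrete equation \eqref{eq:st:disc-avg}, obtaining $\partial_t(u_k - \ik u) - \Delta\ak(u_k - u) + \nabla(p_k - \ak p) = 0$. Subtracting from this the velocity identity in the form $\partial_t(u_k - \ik u) - P\Delta\ak(u_k - u) = 0$ cancels the time-derivative term and produces $\nabla(p_k - \ak p) = (\Id - P)\Delta\ak(u_k - u) = (P - \Id)\Delta\ak(u - u_k)$, which is \eqref{eq:st:error-p}. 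The only genuine subtlety, and the step I would treat most carefully, is the rigorous justification that each manipulation is a valid equality of $\sdG0$-functions: this rests on the regularity in \eqref{reg:stokes} guaranteeing that $\partial_t u$ is regular enough for $\ak\partial_t u = \partial_t \ik u$, and on reading the projected discrete equations in the dual sense of remark~\ref{rm:dual-space}, where $\nabla p_k$ tested against solenoidal functions vanishes.
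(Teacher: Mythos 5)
Your proposal is correct and follows essentially the same route as the paper: both rest on the Galerkin orthogonality obtained by subtracting the averaged continuous equation from the discrete one, on the commutation fact $\ak\partial_t u = \partial_t \ik u$ (which is exactly the paper's $\ak\partial_t(u-\ik u)=0$), and on eliminating the time-derivative term via the velocity identity to isolate $(P-\Id)\Delta\ak(u-u_k)$ for the pressure. The signs and the final identities all check out.
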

\begin{proof}
	By definition of $(u, p)$ and $(u_k, p_k)$ we have Galerkin orthogonality:
	\begin{equation}\label{eq:th:st:error:tmp}
		\partial_t (u_k - u) - \Delta (u_k - u) + \nabla (p_k - p) = 0 \quad \text{in $\sdG0(\grid_k, \sL2(\Omega))'$}
	\end{equation}
	and hence in particular 
	\begin{equation*}
		\ak \partial_t (u_k - u) - P \Delta \ak (u_k - u) = 0.
	\end{equation*}
	Adding and subtracting $\ik u$ we arrive at \eqref{eq:st:error-u} if we can show that $\ak \partial_t(u - \ik u) = 0$, noting that $\ak \partial_t (u_k - \ik u) = \partial_t (u_k - \ik u)$. The identity $\ak \partial_t(u - \ik u) = 0$ follows from
	\begin{equation*}
		\restr{\ak \partial_t(u - \ik u)}{I^n} = k_n^{-1} \int_{I^n} \partial_t (u - \ik u) \dif t = k_n^{-1} \left( (u - \ik u)(t_n) - (u - \ik u)(t_{n-1}) \right) = 0
	\end{equation*}
	for $I^n \in \grid_k$, since $(u - \ik u)(t_j) = 0$ for all $j = 0, \dots, N$. To prove the pressure identity, \eqref{eq:st:error-p}, the Galerkin orthogonality \eqref{eq:th:st:error:tmp} yields that
	\begin{equation*}
		\nabla(p_k - \ak p) = -\partial_t(u_k - \ik u) - \Delta \ak(u - u_k)
	\end{equation*}
	and by \eqref{eq:st:error-u} we have $\partial_t(u_k - \ik u) = - P \Delta \ak(u - u_k)$ which implies \eqref{eq:st:error-p}.
\end{proof}

\begin{theorem}\label{th:st:error-l2}
  Let assumption~\ref{assumption:stokes1} hold. Then,
	the Crank-Nicolson time discretization \eqref{eq:st:disc} of the Stokes equations \eqref{eq:st} satisfies the a priori error estimate
  \[ \nTS{u - u_k}_{\sL\infty \sV1} + \nTS{\ak(u - u_k)}_{\sL2\sV2} +
  \nTS{\ak p - p_k}_{\sL2\sQ1} \le \Cas{stokes1} k^2. \]
\end{theorem}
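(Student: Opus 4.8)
The plan is to reduce everything to the two error identities of Lemma~\ref{th:st:error-id} and the discrete stability estimate of Lemma~\ref{th:st:disc-stab}, used with $s = 1$. Write the discrete velocity error as $e_k \coloneqq u_k - \ik u$. By the velocity error identity \eqref{eq:st:error-u} it solves $\partial_t e_k - P\Delta \ak e_k = -P\Delta \ak(u - \ik u)$, which is precisely the equation appearing in \eqref{eq:disc-stab:diveq} with right-hand side $r_k \coloneqq -P\Delta\ak(u-\ik u)$. Moreover $e_k^0 = u_k(0) - (\ik u)(0) = u^0 - u^0 = 0$, and under assumption~\ref{assumption:stokes1} elliptic regularity for the per-step Stokes problems gives $u_k \in \scG1(\grid_k, \sV2)$, so that $e_k$ and $r_k$ have the regularity required by Lemma~\ref{th:st:disc-stab}.

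Applying \eqref{eq:disc-stab:div} with $s = 1$ and $e_k^0 = 0$ yields
\[ \nTS{e_k}_{\sL\infty \sV1} + \nTS{\ak e_k}_{\sL2\sV2} \le C\nTS{r_k}_{\sL2\sV0}. \]
The right-hand side is controlled using that $-P\Delta$ is bounded from $\sV2$ to $\sV0$, that $\ak$ is an $\sL2$-contraction, and Lemma~\ref{th:nodal-int-properties} with $m = 2$: concretely
\[ \nTS{r_k}_{\sL2\sV0} \le C\nTS{\ak(u-\ik u)}_{\sL2\sV2} \le C\nTS{u-\ik u}_{\sL2\sV2} \le Ck^2 \nTS{\partial_{tt}u}_{\sL2\sV2}, \]
and the last factor is bounded by $\Cas{stokes1}$ through the regularity estimate \eqref{reg:stokes}. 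Splitting $u - u_k = (u - \ik u) - e_k$ and using Lemma~\ref{th:nodal-int-properties} once more, now with $p = \infty$ and the bound $\nTS{u}_{\sC2\sV1} \le \Cas{stokes1}$, produces the first two claimed terms: $\nTS{u-u_k}_{\sL\infty\sV1} \le \Cas{stokes1}k^2$ and, since $\ak$ is a contraction, $\nTS{\ak(u-u_k)}_{\sL2\sV2} \le \Cas{stokes1}k^2$.

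For the pressure I would start from the pressure error identity \eqref{eq:st:error-p}, $\nabla(p_k - \ak p) = (P - \Id)\Delta\ak(u - u_k)$. Since $p$, $\ak p$ and $p_k$ all have zero spatial mean, the zero-average Poincar\'e inequality gives $\nTS{\ak p - p_k}_{\sQ1} \le C\nTS{\nabla(\ak p - p_k)}_{\sL2}$ pointwise in time. The operator $(P - \Id)\Delta$ is bounded from $\sV2$ into $\sL2(\Omega)$, so integrating in time and invoking the $\sL2\sV2$ bound on $\ak(u - u_k)$ from the previous step yields $\nTS{\ak p - p_k}_{\sL2\sQ1} \le C\nTS{\ak(u-u_k)}_{\sL2\sV2} \le \Cas{stokes1}k^2$, which closes the estimate.

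The computation is mechanical once the error identities are in hand; the conceptual crux, and the only place where care is genuinely needed, is that the pressure estimate rests entirely on controlling the \emph{averaged} velocity error $\ak(u - u_k)$ in $\sL2\sV2$ rather than the pointwise error, which the stability lemma delivers at the full order $k^2$ precisely because its left-hand side bounds $\nTS{\ak v_k}_{\sL2\sV{s+1}}$ and not $\nTS{v_k}_{\sL2\sV{s+1}}$. I would double-check two supporting points: that the per-step Stokes solves indeed give $u_k \in \scG1(\grid_k, \sV2)$ so that Lemma~\ref{th:st:disc-stab} applies with $s=1$, and that $(P-\Id)\Delta = -(\Id - P)\Delta$ really maps $\sV2$ boundedly into $\sL2(\Omega)$, which holds since $\Delta\colon \sH2 \to \sL2$ is bounded and $P$ is an $\sL2$-projection.
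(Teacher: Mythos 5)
Your proposal is correct and follows essentially the same route as the paper's own proof: the velocity error identity \eqref{eq:st:error-u} combined with the discrete stability estimate \eqref{eq:disc-stab:div} at $s=1$, the interpolation bounds of lemma~\ref{th:nodal-int-properties}, and the pressure identity \eqref{eq:st:error-p} with Poincar\'e's inequality. The two supporting points you flag for checking (the regularity $u_k \in \scG1(\grid_k,\sV2)$ and the boundedness of $(P-\Id)\Delta$ from $\sV2$ to $\sL2(\Omega)$) are indeed the ones the paper also relies on, the first being noted explicitly in its proof.
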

\begin{proof}
	We split the velocity error as follows:
	\begin{align*}
		\norm{u - u_k}_{\sL\infty \sV1} + \norm{\ak (u - u_k)}_{\sL2 \sV2} &\leq \norm{u - \ik u}_{\sL\infty \sV1} + \norm{\ak ( u - \ik u)}_{\sL2 \sV2} \\
		&\qquad+ \norm{u_k - \ik u}_{\sL\infty \sV1} + \norm{\ak (u_k - \ik u)}_{\sL2 \sV2}.
	\end{align*}
	We apply the discrete stability estimate \eqref{eq:disc-stab:div} with $s = 1$ to the error identity \eqref{eq:st:error-u} with $v_k \coloneqq u_k - \ik u$ and $r_k \coloneqq - P \Delta \ak(u - \ik u)$. Let us note that, here and in the following, it is easy to check that the the discrete solution has enough regularity to apply the stability estimate using the regularity theory for the stationary Stokes equations. For the last two terms above this yields
	\begin{align*}
		\MoveEqLeft\norm{u_k - \ik u}_{\sL\infty \sV1} + \norm{\ak (u_k - \ik u)}_{\sL2 \sV2}\\
		&\leq C \norm{{-} P\Delta \ak(u - \ik u)} = C \norm{\ak (u - \ik u)}_{\sL2 \sV2}.
	\end{align*}
	Combining this estimate with the previous one, using the stability of $\ak$ and the interpolation error estimate from lemma~\ref{th:nodal-int-properties} we arrive at
	\begin{align*}
		\MoveEqLeft\norm{u - u_k}_{\sL\infty \sV1} + \norm{\ak (u - u_k)}_{\sL2 \sV2} \leq \norm{u - \ik u}_{\sL\infty \sV1} + C \norm{\ak (u - \ik u)}_{\sL2 \sV2}\\
		&\leq C k^2 \left( \norm{\partial_{tt} u}_{\sL\infty \sV1} + \norm{\partial_{tt} u}_{\sL2 \sV2} \right) \leq \Cas{stokes1} k^2.
	\end{align*}
	For the pressure error we use \eqref{eq:st:error-p} and the validity of Poincar\'e's inequality on $Q^1$:
	\begin{align*}
		\nTS{p_k - \ak p}_{\sL2 \sQ1} &\le C \nTS{\nabla(p_k - \ak p)} = C \nTS{(P - \Id) \Delta \ak(u - u_k)}\\
		&\leq C \nTS{\ak(u - u_k)}_{\sL2 \sV2} \leq \Cas{stokes1} k^2.\qedhere
	\end{align*}
\end{proof}

The previous theorem considered the pressure error in an integral sense. For pointwise-in-time pressure errors we need to increase the regularity assumptions:

\begin{assumption}\label{assumption:stokes2}
	We assume $\Omega$ has a $C^6$-boundary, the right-hand side has regularity $f\in
  C^0(\bar I, \sH4(\Omega))\cap C^1(\bar I, \sH2(\Omega))\cap
	C^2(\bar I, \sL2(\Omega))\cap \sH3(I, \sV{-1})$ and the initial data $u^0 \in \sV6$ satisfies the compatibility conditions \eqref{comp:stokes1} and \eqref{comp:stokes2}. We write
  \[
	\Cas{stokes2} :=
	C \left( \nTS{u^0}_{\sV6}
  +\nTS{f}_{C^0\sH4}
  +\nTS{f}_{C^1\sH2}
  +\nTS{f}_{C^2\sL2}
	+\nTS{f}_{\sH3\sV{-1}} \right)
  \]
	where $C > 0$ is independent of the data and, by abuse of notation, may change with each occurrence of $\Cas{stokes2}$.
\end{assumption}
It is shown in~\cite[theorem 2.1 and theorem
  2.2]{Temam1982} that under these conditions the solution
satisfies
\begin{equation}\label{reg:higher}
	\nTS{u}_{\sC0\sV6} + \nTS{u}_{\sC1 \sV4} + \nTS{u}_{\sC2\sV2} + \nTS{u}_{\sH2\sV3} +
	\nTS{p}_{\sC2 \sQ1} \le \Cas{stokes2}. 
\end{equation}

\begin{theorem}\label{th:st:error-linf}
	Let assumption~\ref{assumption:stokes2} hold. Then, the Crank-Nicolson time discretization \eqref{eq:st:disc} of the Stokes equations \eqref{eq:st} satisfies the a priori error estimate
  \[ \nTS{u - u_k}_{\sL\infty \sV2} + \nTS{\ak (u - u_k)}_{\sL2 \sV3} + \nTS{\mk p - p_k}_{\sL\infty \sQ1} \leq \Cas{stokes2} k^2. \]
	In particular, the discrete pressure $p_k$ must be compared to the continuous pressure $p$ evaluated at interval midpoints for second order convergence.
\end{theorem}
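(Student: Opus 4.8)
The plan is to repeat the argument of theorem~\ref{th:st:error-l2} one regularity level higher, applying the discrete stability estimate \eqref{eq:disc-stab:div} with $s = 2$ rather than $s = 1$, and then to handle the pointwise-in-time pressure error by routing it through the average $\ak p$, from which the midpoint value $\mk p$ differs only at order $k^2$.

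For the velocity I would split $u - u_k = (u - \ik u) + (\ik u - u_k)$ and apply lemma~\ref{th:st:disc-stab} with $s = 2$ to the error identity \eqref{eq:st:error-u}, i.e.\ with $v_k \coloneqq u_k - \ik u$ and $r_k \coloneqq -P\Delta\ak(u - \ik u)$. Since $v_k^0 = u^0 - u^0 = 0$, this bounds $\nTS{u_k - \ik u}_{\sL\infty \sV2} + \nTS{\ak(u_k - \ik u)}_{\sL2 \sV3}$ by $C\nTS{r_k}_{\sL2 \sV1} = C\nTS{\ak(u - \ik u)}_{\sL2 \sV3}$, using that $-P\Delta$ maps $\sV3$ isometrically onto $\sV1$ under the graph-norm identification. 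The interpolation terms $\nTS{u - \ik u}_{\sL\infty \sV2}$ and $\nTS{\ak(u - \ik u)}_{\sL2 \sV3} \le \nTS{u - \ik u}_{\sL2 \sV3}$ are each $\le Ck^2$ by lemma~\ref{th:nodal-int-properties} with $m = 2$, while \eqref{reg:higher} controls $\nTS{\partial_{tt}u}_{\sL\infty \sV2}$ and $\nTS{\partial_{tt}u}_{\sL2 \sV3}$ by $\Cas{stokes2}$. Together these give $\nTS{u - u_k}_{\sL\infty \sV2} + \nTS{\ak(u - u_k)}_{\sL2 \sV3} \le \Cas{stokes2}k^2$.

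For the pressure I would write $\mk p - p_k = (\mk p - \ak p) + (\ak p - p_k)$. The first summand is controlled by lemma~\ref{th:midpoint-quadratic-estimate} with $X = \sQ1$, giving $\nTS{\mk p - \ak p}_{\sL\infty \sQ1} \le Ck^2\nTS{\partial_{tt}p}_{\sL\infty \sQ1} \le \Cas{stokes2}k^2$ from the bound $\nTS{p}_{\sC2 \sQ1} \le \Cas{stokes2}$ in \eqref{reg:higher}. For the second summand I would use the pressure error identity \eqref{eq:st:error-p}, Poincaré's inequality on $\sQ1$, and the bound $\nTS{(P - \Id)\Delta w}_{\sL2} \le C\nTS{w}_{\sV2}$ already used in theorem~\ref{th:st:error-l2}, but now taking the supremum in time, so that $\nTS{\ak p - p_k}_{\sL\infty \sQ1} \le C\nTS{\ak(u - u_k)}_{\sL\infty \sV2}$.

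The step I expect to be the crux is the last one: I need $\ak(u - u_k)$ in the $\sL\infty$-in-time $\sV2$-norm, whereas the stability estimate only delivers the average in $\sL2 \sV3$. The resolution is that the averaging operator $\ak$ is a contraction in every $\sL\infty X$, so $\nTS{\ak(u - u_k)}_{\sL\infty \sV2} \le \nTS{u - u_k}_{\sL\infty \sV2} \le \Cas{stokes2}k^2$ by the velocity estimate above, and no control of a time derivative of the pressure error itself is required. Combining the two summands yields $\nTS{\mk p - p_k}_{\sL\infty \sQ1} \le \Cas{stokes2}k^2$. Conceptually the whole point is that the error identity places $p_k$ at distance $O(k^2)$ from the average $\ak p$, while the average is itself $O(k^2)$ from the midpoint value $\mk p$ by lemma~\ref{th:midpoint-quadratic-estimate}; comparing $p_k$ instead to nodal or generic pointwise values of $p$ would lose one order, which is exactly why the interval midpoints must appear.
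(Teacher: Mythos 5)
Your proposal is correct and follows essentially the same route as the paper: the velocity bound via lemma~\ref{th:st:disc-stab} with $s=2$ applied to \eqref{eq:st:error-u}, the pressure split $\mk p - p_k = (\mk p - \ak p) + (\ak p - p_k)$ handled by lemma~\ref{th:midpoint-quadratic-estimate} and the identity \eqref{eq:st:error-p}, and the passage $\nTS{\ak(u-u_k)}_{\sL\infty\sV2}\le\nTS{u-u_k}_{\sL\infty\sV2}$ that the paper uses implicitly. The only difference is that you make this last contraction step explicit, which is a useful clarification rather than a deviation.
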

\begin{proof}
	For the velocity error we can repeat the arguments from theorem~\ref{th:st:error-l2} with one order of regularity higher, yielding
	\begin{align*}
		\MoveEqLeft\norm{u - u_k}_{\sL\infty \sV2} + \nTS{\ak (u - u_k)}_{\sL2 \sV3} \leq \norm{u - \ik u}_{\sL\infty \sV2} + C \norm{\ak (u - \ik u)}_{\sL2 \sV3} \\
		&\leq C k^2 \left( \norm{\partial_{tt} u}_{\sL\infty \sV2} + \nTS{\partial_{tt} u}_{\sL2 \sV3} \right) \leq \Cas{stokes2} k^2.
	\end{align*}
	For the pressure error we use \eqref{eq:st:error-p} to get
	\begin{equation*}
		\nTS{\ak p - p_k}_{\sL\infty \sQ1} \leq C \nTS{(P - \Id) \Delta \ak (u - u_k)}_{\sL\infty \sL2} \leq C \nTS{u - u_k}_{\sL\infty \sV2}.
	\end{equation*}
	Using the already established estimate for $u - u_k$ and lemma~\ref{th:midpoint-quadratic-estimate} we conclude that
	\begin{align*}
		\MoveEqLeft\nTS{\mk p - p_k}_{\sL\infty \sQ1} \leq \nTS{\ak p - \mk p}_{\sL\infty \sQ1} + \nTS{\ak p - p_k}_{\sL\infty \sQ1}\\
		&\leq C k^2 \nTS{\partial_{tt} p}_{\sL\infty \sQ1} + C \nTS{u - u_k}_{\sL\infty \sV2} \leq \Cas{stokes2} k^2. \qedhere
	\end{align*}
\end{proof}

\begin{remark}
	The proof of theorem~\ref{th:st:error-linf} shows that $\mk p$ in the pressure estimate could be replaced with $\ak p$, the latter in fact being more natural in view of \eqref{eq:st:error-u}. We formulate all $\sL\infty$-in-time pressure estimates with $\mk p$ due to its simpler evaluation.
\end{remark}

\subsection{Navier-Stokes Equations}

In this section let $u$ and $p$ denote a solution to the Navier-Stokes equations
\begin{equation}\label{eq:ns}
	\partial_t u - \Delta u + u \cdot \nabla u + \nabla p = f, \quad \div u = 0 \qquad \text{in $\Omega$},
\end{equation}
with $u(0) = u^0$, $\restr[0]{u}{\partial\Omega} =
0$ and $\int_\Omega p = 0$. If $n = 3$ we require $T$ or the data to
be sufficiently small to guarantee existence of weak solutions. As in the
last section we will assume that the data is sufficiently regular to
allow the minimal required regularity for quadratic pressure estimates
without smoothing techniques. Throughout this section, the problem
data has to satisfy the 
following assumption:
\begin{assumption}\label{assumption:ns1}
  We assume that $\Omega$ has a $C^5$-boundary, the
  right hand side is given in $f\in C^0(\bar I, \sH3(\Omega))\cap
  C^1(\bar I, \sH1(\Omega))\cap 
  \sH2(I, \sL2(\Omega))$ and that the initial data is given in
  $u^0\in \sV5$ and satisfies the
  compatibility condition that $p^0$ and $q^0$ with 
  \begin{align}\label{comp:ns1}
		&\left\{
		\begin{aligned}
			\Delta p^0 &= \nabla\cdot \big(f(0)-u^0\cdot\nabla
			u^0\big),\\
			\Delta q^0 &= \nabla\cdot
			\partial_t f(0) - 2 \operatorname{tr}\left(\left(\nabla
			f(0)+\nabla\Delta u^0 - \nabla(u^0\cdot\nabla u^0) -
			\nabla^2 p^0 \right)\nabla
			u^0\right)
		\end{aligned}
		\right.
		\intertext{in $\Omega$ can be chosen such that}
		\label{comp:ns2}
		&\left\{
    \begin{aligned}
      \nabla p^0 &= f(0)+\Delta u^0,\\
			\nabla q^0 &= \partial_t f(0) +\Delta f(0) + \Delta^2 u^0 -
      2\Delta u^0\cdot\nabla u^0
      -f\cdot\nabla u^0\\
			&\qquad - \Delta \nabla p^0 + \nabla p^0\cdot\nabla u^0
    \end{aligned}
		\right.
  \end{align}
	on $\partial \Omega$. We write
  \[
  \Cas{ns1}:=
	C ( \nTS{u^0}_{\sV5}
  ,\nTS{f}_{C^0\sH3}
  ,\nTS{f}_{C^1\sH1}
	,\nTS{f}_{\sH2\sL2} )
  \]
	where $C(\cdots)$ is some function independent of the data and, by abuse of notation, may change with each occurrence of $\Cas{ns1}$.
\end{assumption}
In~\cite[theorem 3.1 and theorem
  3.2]{Temam1982} it is shown that exactly under these conditions
there exists a solution to the Navier-Stokes equations that satisfies
the bound
\begin{equation}\label{reg:ns}
	\nTS{u}_{\sC0 \sV5} + \nTS{u}_{\sC1 \sV3} + \nTS{u}_{\sC2 \sV1} + \nTS{u}_{\sH2\sV2} + \nTS{p}_{\sH2\sQ1} \le
  \Cas{ns1}. 
\end{equation}
Like in the case of the Stokes equations the compatibility
condition requires the solution of an
overdetermined problem. Again we assume the validity of
assumption~\ref{assumption:ns1} throughout this section. 
In section~\ref{sec:low-reg:ns} we will introduce a
smoothing version of all estimates that will allow us to derive
optimal order error bounds without relying on such non-local
compatibility conditions.

We call functions $u_k \in \scG1(\grid_k, \sV1)$ and $p_k \in \sdG0(\grid_k, \sQ0)$ a solution to the time-discrete Navier-Stokes equations if there holds
\begin{equation}\label{eq:ns:disc}
	\partial_t u_k - \Delta u_k + (\ak u_k) \cdot \nabla (\ak u_k) + \nabla p_k = f \quad\text{in $\sdG0(\grid_k, \sHz1(\Omega))'$}
\end{equation}
with $u_k(0) = u^0$. As a classical timestepping scheme this is equivalent to
\begin{equation*}
	u_k^n - u^{n-1}_k - \tfrac{k_n}{2} \Delta (u_k^n + u_k^{n-1}) + \tfrac{k_n}{4} (u_k^n + u_k^{n-1}) \cdot \nabla (u_k^n + u_k^{n-1}) + k_n \nabla p_k^n  = \int_{I^n} f \dif t,
\end{equation*}
for $n = 1, \ldots, N$ with $u_k^0 \coloneqq u^0$. 

\begin{remark}
	The quadratic form of the nonlinearity makes the approximation
	$(\ak u_k) \cdot \nabla (\ak u_k)$ instead of $u_k \cdot
	\nabla u_k$ feasible. This not only simplifies numerical quadrature, but also the analysis since some terms will cancel.
\end{remark}

Before we prove a priori estimates we collect some technical results, sometimes in greater generality than needed for this section.

\begin{lemma}[Gronwall's inequality]\label{th:gronwall}
	Let $\alpha_n > 0$, $\beta_n \ge 0$, $\gamma_n \ge 0$, $\delta \ge 0$ and $x_n \ge 0$ for $n = 0, \ldots, N$. If $x_n$ satisfies
	\[ \alpha_n x_n + \beta_n \le \sum_{k = 0}^{n-1} \gamma_k x_k + \delta \qquad \text{for $n = 0, \ldots, N$} \] 
	then there holds
	\[ \alpha_n x_n + \beta_n \le \delta \euler^{\sum_{k = 0}^{n-1} \frac{\gamma_k}{\alpha_k}} \qquad \text{for $n = 0, \ldots, N$}. \]
\end{lemma}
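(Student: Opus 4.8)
The plan is to argue by induction on $n$, working with the right-hand side of the hypothesis rather than with $x_n$ directly. First I would introduce the partial sums $S_n \coloneqq \delta + \sum_{k=0}^{n-1} \gamma_k x_k$, so that the assumed inequality reads simply $\alpha_n x_n + \beta_n \le S_n$. Since $\beta_n \ge 0$ and $\alpha_n > 0$, this immediately yields the pointwise bound $x_n \le S_n / \alpha_n$, which is the only place the hypothesis is used; note that all the $S_n$ are nonnegative because every term defining them is.

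Next I would set up and iterate the recursion for $S_n$. By definition $S_{n+1} = S_n + \gamma_n x_n$, and substituting $x_n \le S_n/\alpha_n$ together with $\gamma_n \ge 0$ gives $S_{n+1} \le S_n\,(1 + \gamma_n/\alpha_n)$. The one genuinely analytic ingredient is to convert this additive recursion into the claimed exponential bound via the elementary inequality $1 + t \le \euler^t$, valid at $t = \gamma_n/\alpha_n \ge 0$, which replaces each factor $1 + \gamma_n/\alpha_n$ by $\euler^{\gamma_n/\alpha_n}$. Since all factors are nonnegative and $S_n \ge 0$, the chained inequalities are legitimate, and iterating from the base value $S_0 = \delta$ (empty sum) produces
\[ S_n \le \delta \prod_{k=0}^{n-1} \euler^{\gamma_k/\alpha_k} = \delta\, \euler^{\sum_{k=0}^{n-1} \gamma_k/\alpha_k}. \]
Combining this with $\alpha_n x_n + \beta_n \le S_n$ then gives exactly the asserted estimate.

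I do not expect a real obstacle here, as the argument is elementary; the care needed is purely in the bookkeeping. Specifically, I would check that the empty-sum conventions are consistent at $n=0$, where both the hypothesis and the conclusion reduce to $\alpha_0 x_0 + \beta_0 \le \delta$; that every quantity being multiplied in the recursion is nonnegative so the inequalities may be chained; and that the assumption $\beta_n \ge 0$ plays a dual role, allowing $\beta_n$ to be discarded when extracting $x_n \le S_n/\alpha_n$ while still being retained on the left-hand side of the final conclusion $\alpha_n x_n + \beta_n \le \delta\,\euler^{\sum_{k=0}^{n-1}\gamma_k/\alpha_k}$.
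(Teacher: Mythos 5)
Your proof is correct and complete: the partial-sum recursion $S_{n+1} \le S_n(1+\gamma_n/\alpha_n) \le S_n \euler^{\gamma_n/\alpha_n}$ together with $S_0 = \delta$ is exactly the standard discrete Gronwall argument, and your bookkeeping of the signs and of the role of $\beta_n$ is accurate. The paper itself states this lemma without proof, so there is nothing to compare against; your argument is the expected one.
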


\begin{lemma}\label{th:ns:nonlin-props}
	Let $u, v \in \sV1$ with additional regularity if necessary.
	\begin{enumerate}
		\item 
		For $s = -3, \ldots, 0$ there holds
			\begin{subequations}\label{eq:nl:sl1}
				\begin{align}
					\nS{u \cdot \nabla v}_{\sH{s}} &\leq C \nS{u}_{\sL2} \nS{v}_{\sV{s+3}}, \label{eq:nl:1}\\
					\nS{u \cdot \nabla v}_{\sH{s}} &\leq C \nS{u}_{\sV2} \nS{v}_{\sV{s+1}}, \label{eq:nl:2}\\
					\nS{u \cdot \nabla v}_{\sH{s}} &\leq C \nS{u}_{\sV{s+1}} \nS{v}_{\sV2}, \label{eq:nl:3}
				\end{align}
			\end{subequations}
			where \eqref{eq:nl:2} and \eqref{eq:nl:3} can be sharpened for $s = 0$ to
			\begin{subequations}\label{eq:nl:s0}
				\begin{align}
					\nS{u \cdot \nabla v}_{\sL2} &\leq C \nS{\nabla u}_{\sL2}^{\frac 1 2} \nS{\Delta u}_{\sL2}^{\frac 12} \nS{v}_{\sV1}, \label{eq:nl:2b}\\
					\nS{u \cdot \nabla v}_{\sL2} &\leq C \nS{u}_{\sV1} \nS{\nabla v}_{\sL2}^{\frac 1 2} \nS{\Delta v}_{\sL2}^{\frac 12}. \label{eq:nl:3b}
				\end{align}
			\end{subequations}

		\item
			For $s \geq 1$ we have
			\begin{equation}\label{eq:nl:sge1}
				\nS{\nabla^s (u \cdot \nabla v)}_{\sL2} \leq C \sum_{i = 1}^s \nS{u}_{\sV{i+1}} \nS{v}_{\sV{s-i+2}} = C \sum_{i = 1}^s \nS{v}_{\sV{i+1}} \nS{u}_{\sV{s-i+2}}.
			\end{equation}
	\end{enumerate}	
\end{lemma}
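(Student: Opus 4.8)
The plan is to treat the two parts of the lemma separately, in each case reducing the claimed bounds to H\"older's inequality together with the Sobolev embeddings available in space dimension $d \le 3$, namely $\sH2 \hookrightarrow \sL\infty$, $\sH1 \hookrightarrow \sL6$ and $\sH1 \hookrightarrow \sL4$, and the elliptic-regularity equivalences $\norm{u}_{\sV1} \sim \norm{\nabla u}_{\sL2}$, $\norm{u}_{\sV2} \sim \norm{\Delta u}_{\sL2} \sim \norm{u}_{\sH2}$, and $\norm{u}_{\sV{s}} \sim \norm{u}_{\sH{s}}$ for solenoidal $u$ satisfying the Dirichlet condition. I would first establish the sharpened $s = 0$ estimates, since the rest of Part~1 descends from them. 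For \eqref{eq:nl:2b} I would write $\norm{u \cdot \nabla v}_{\sL2} \le \norm{u}_{\sL\infty} \norm{\nabla v}_{\sL2}$ and bound $\norm{u}_{\sL\infty}$ by Agmon's inequality $\norm{u}_{\sL\infty} \le C \norm{u}_{\sH1}^{1/2} \norm{u}_{\sH2}^{1/2}$, then convert the two factors into $\norm{\nabla u}_{\sL2}^{1/2}$ and $\norm{\Delta u}_{\sL2}^{1/2}$. For \eqref{eq:nl:3b} I would instead use $\norm{u \cdot \nabla v}_{\sL2} \le \norm{u}_{\sL6} \norm{\nabla v}_{\sL3}$, embed $\norm{u}_{\sL6} \le C \norm{u}_{\sV1}$, and interpolate $\norm{\nabla v}_{\sL3} \le C \norm{\nabla v}_{\sL2}^{1/2} \norm{\nabla v}_{\sH1}^{1/2}$ by Gagliardo--Nirenberg, identifying $\norm{\nabla v}_{\sH1} \sim \norm{\Delta v}_{\sL2}$.

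The unsharpened $s = 0$ cases of \eqref{eq:nl:1}, \eqref{eq:nl:2}, \eqref{eq:nl:3} then follow at once: \eqref{eq:nl:2} and \eqref{eq:nl:3} by bounding the half-powers in the sharpened estimates by $\norm{\cdot}_{\sV2}$, and \eqref{eq:nl:1} directly from $\norm{u \cdot \nabla v}_{\sL2} \le \norm{u}_{\sL2} \norm{\nabla v}_{\sL\infty}$ together with $\norm{\nabla v}_{\sL\infty} \le C \norm{v}_{\sV3}$.

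For the negative indices $s = -1, -2, -3$ I would argue by duality, writing $\norm{u \cdot \nabla v}_{\sH{s}} = \sup\{\spL{u \cdot \nabla v, \phi} : \norm{\phi}_{\sH{-s}} \le 1\}$ over test fields in $\sHz{-s}$, and then integrate by parts. Using $\div u = 0$ and $u \cdot \nabla v = \div(u \otimes v)$ the boundary and divergence terms drop and $\spL{u \cdot \nabla v, \phi} = -\spL{v, u \cdot \nabla \phi}$, which shifts one derivative from $v$ onto the test function. Each inequality then reduces to a single H\"older split of $\int v\,(u \cdot \nabla \phi)$ matched to the regularity budget: for \eqref{eq:nl:1} one keeps $\norm{u}_{\sL2}$ and pairs $v$ with $\nabla \phi$ through $\sH2 \hookrightarrow \sL\infty$ and $\sH1 \hookrightarrow \sL4$ as the index decreases; for \eqref{eq:nl:2} one places $u$ in $\sL\infty$ via $\sV2 \hookrightarrow \sL\infty$; for \eqref{eq:nl:3} one exploits the $u$--$v$ symmetry of the split. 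The step I expect to cost the most care is making the integration by parts legitimate when $v$ itself lies in a negative space $\sV{s+1}$, as for $s = -2, -3$ in \eqref{eq:nl:2}: there one should read $u \cdot \nabla v$ as $\div(u \otimes v)$ from the outset and verify that $u \cdot \nabla \phi$ is an admissible pairing partner in the appropriate space, which is precisely where the embeddings and the $\sV{}$--$\sH{}$ norm equivalences are needed.

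For Part~2 I would expand $\nabla^s(u \cdot \nabla v)$ by the Leibniz rule into a sum of terms $\nabla^a u \cdot \nabla^{b+1} v$ with $a + b = s$, estimate each in $\sL2$ by H\"older placing the higher-order factor in $\sL2$ and the other in $\sL\infty$ (or both in $\sL4$ in the balanced case), and absorb the $\sL\infty$ factor using $\norm{\nabla^a u}_{\sL\infty} \le C \norm{u}_{\sH{a+2}} \le C \norm{u}_{\sV{a+2}}$ and the analogous bound for $v$, together with Gagliardo--Nirenberg interpolation to redistribute derivatives. Collecting the products and relabelling the summation index yields $\sum_{i=1}^{s} \norm{u}_{\sV{i+1}} \norm{v}_{\sV{s-i+2}}$, and the second equality in \eqref{eq:nl:sge1} is just the reindexing $i \mapsto s+1-i$. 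The only real work is the bookkeeping: checking that every Leibniz term falls under one of these H\"older splits without exceeding the right-hand side, which is routine for $d \le 3$ but is where the index ranges and the generic constant $C$ must be tracked.
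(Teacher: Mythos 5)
Your proposal follows essentially the same route as the paper's proof: the $s=0$ cases via Agmon's and the Gagliardo--Nirenberg inequalities, the negative cases by duality and integration by parts shifting a derivative onto the test function (using $\div u = 0$), and the case $s\ge 1$ by the Leibniz rule with the embeddings $\sH1(\Omega)\hookrightarrow \sL4(\Omega)$ and $\sH2(\Omega)\hookrightarrow\sL\infty(\Omega)$. The Hölder splits you sketch for each index match those used in the paper, so the argument is correct as proposed.
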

\begin{proof}
	We use the embeddings $\sH1(\Omega) \hookrightarrow \sL6(\Omega) \hookrightarrow \sL3(\Omega)$ and $\sH2(\Omega) \hookrightarrow \sL\infty(\Omega)$.

	\begin{enumerate}
		\item[$s = 0$:]
			With the Gagliardo-Nirenberg inequality $\norm{u}_{\sL3} \leq C \norm{u}_{\sL2}^{\frac 12} \norm{\nabla u}_{\sL2}^{\frac 12}$ for $u \in \sH1(\Omega)$ and Agmon's inequality $\norm{u}_{\sL\infty} \leq C \norm{\nabla u}_{\sL2}^{\frac 12} \norm{\Delta u}_{\sL2}^{\frac 12}$ for $u \in \sV2$ we get \eqref{eq:nl:1}, \eqref{eq:nl:2b} and \eqref{eq:nl:3b}:
			\begin{alignat*}{3}
				\nS{u \cdot \nabla v}_{\sL2} &\leq C \nS{u}_{\sL2} \nS{\nabla v}_{\sL\infty} &&\leq C \nS{u}_{\sL2} \nS{v}_{\sV3},\\
				\nS{u \cdot \nabla v}_{\sL2} &\leq C \nS{u}_{\sL\infty} \nS{\nabla v}_{\sL2} &&\leq C \nS{\nabla u}_{\sL2}^{\frac 12} \nS{\Delta u}_{\sL2}^{\frac 12} \nS{v}_{\sV1},\\ 
				\nS{u \cdot \nabla v}_{\sL2} &\leq C \nS{u}_{\sL6} \nS{\nabla v}_{\sL3} &&\leq C \nS{u}_{\sV1} \nS{\nabla v}_{\sL2}^{\frac 12} \nS{\Delta v}_{\sL2}^{\frac 12}.
			\end{alignat*}

		\item[$s= -1$:]
			Let $w \in \sHz1(\Omega)$. Then $(u \cdot \nabla v, w) = -(u \otimes v, \nabla w)$ and thus \eqref{eq:nl:2} since
			\[ \nS{u \cdot \nabla v}_{\sH{-1}} \leq C \nS{u \otimes v}_{\sL2} \leq C \nS{u}_{\sL\infty} \nS{v}_{\sL2} \leq C \nS{u}_{\sV2} \nS{v}_{\sL2}. \]
			\eqref{eq:nl:3} follows by using $\nS{u \otimes v}_{\sL2} \leq C \nS{u}_{\sL2} \nS{v}_{\sL\infty}$ instead. \eqref{eq:nl:3} implies \eqref{eq:nl:1}. 
	
		\item[$s=-2$:]
			Let $w \in \sHz1(\Omega) \cap \sH2(\Omega)$. Then \eqref{eq:nl:1} follows from 
			\begin{equation*}
				(u \cdot \nabla v, w) \leq \nS{u \cdot \nabla v}_{\sL1} \nS{w}_{\sL\infty} \leq \nS{u}_{\sL2} \nS{\nabla v}_{\sL2} \nS{w}_{\sH2}.
			\end{equation*}
			For \eqref{eq:nl:2} we estimate
			\begin{equation}\label{eq:th:ns:nonlin-props:tmp}
				(u \cdot \nabla v, w) = - (u \cdot \nabla w, v) \leq C \nS{u \cdot \nabla w}_{\sH1} \nS{v}_{\sV{-1}}
			\end{equation}
			and \eqref{eq:nl:2} follows from $\nS{u \cdot \nabla w}_{\sL2} \leq C \nS{u}_{\sL\infty} \nS{\nabla w}_{\sL2}$ together with
			\begin{equation*}
				\nS{\nabla (u \cdot \nabla w)}_{\sL2} \leq C \left( \nS{\nabla u}_{\sL3} \nS{\nabla w}_{\sL6} + \nS{u}_{\sL\infty} \nS{\nabla^2 w}_{\sL2} \right) \leq C \nS{u}_{\sV2} \nS{w}_{\sH2}.
			\end{equation*}
			If in \eqref{eq:th:ns:nonlin-props:tmp} we use instead $\spL{u \cdot \nabla w, v} = \spL{u, \nabla w v} \leq \nS{u}_{\sV{-1}} \nS{\nabla w v}_{\sH1}$, similar arguments yield \eqref{eq:nl:3}.

		\item[$s = -3$:] 
			Let $w \in \sHz1(\Omega) \cap \sH3(\Omega)$. Then \eqref{eq:nl:1} follows from 
			\[ (u \cdot \nabla v, w) = - (u \otimes v, \nabla w) \leq \nS{u \otimes v}_{\sL1} \nS{\nabla w}_{\sL\infty} \leq C \nS{u}_{\sL2} \nS{v}_{\sL2} \nS{w}_{\sH3}. \] 
			For \eqref{eq:nl:2} we have $(u \cdot \nabla v, w) \leq C \nS{u \cdot \nabla w}_{\sH2} \nS{v}_{\sV{-2}}$ similarly as in \eqref{eq:th:ns:nonlin-props:tmp}. Using the estimates for $\norm{u \cdot \nabla w}_{\sL2}$ and $\norm{\nabla (u \cdot \nabla w)}_{\sL2}$ from $s = -2$ and
			\begin{align*}
				\MoveEqLeft\nS{\nabla^2 (u \cdot \nabla w)}_{\sL2}\\
				&\leq C \left( \nS{\nabla^2 u}_{\sL2} \nS{\nabla w}_{\sL\infty} + \nS{\nabla u}_{\sL3} \nS{\nabla^2 w}_{\sL6} + \nS{u}_{\sL\infty} \nS{\nabla^3 w}_{\sL2} \right)\\
				&\leq C \nS{u}_{\sV2} \nS{w}_{\sH3}
			\end{align*}
			we arrive at \eqref{eq:nl:2}. The estimate \eqref{eq:nl:3} follows by a similar argument.
			
		\item[$s \geq 1$:] We have by the product rule
			\[ \nS{\nabla^s (u \cdot \nabla v)} \leq C \sum_{i = 0}^s \nS{\nabla^{i} u}_{\sL{p_i}} \nS{\nabla^{s - i +1} v}_{\sL{q_i}} \] 
			with $\frac{1}{p_i} + \frac{1}{q_i} = \frac 1 2$ for $i = 0, \ldots, s$. For $i > 0$ we set $p_i = q_i = 4$ and use $\sH1(\Omega) \hookrightarrow \sL4(\Omega)$, for $i = 0$ we set $p_0 = \infty$, $q_0 = 2$ and use $\sH2(\Omega) \hookrightarrow \sL\infty(\Omega)$. This yields
			\[ \nS{\nabla^s (u \cdot \nabla v)} \leq C \sum_{i = 1}^s \nS{u}_{\sV{i+1}} \nS{v}_{\sV{s-i+2}} + C \nS{u}_{\sV2} \nS{v}_{\sV{s+1}} \] 
			and since the last term corresponds to $i = 1$, \eqref{eq:nl:sge1} follows.\qedhere
	\end{enumerate}
\end{proof}

\begin{lemma}
	With the operator
	\[ \oCdiff(u, v) \coloneqq (u - v) \cdot \nabla(u - v) + (u - v) \cdot \nabla v + v \cdot \nabla (u - v). \]
	we have the velocity error identity
	\begin{equation}\label{eq:ns:error-u}
		\begin{aligned}
			\MoveEqLeft\partial_t(u_k - \ik u) - P \Delta \ak (u_k - \ik u) \\
			&= -P \Delta \ak(u - \ik u) + \ak P \oCdiff(u, \ak \ik u) - P \oCdiff(\ak u_k, \ak \ik u)
		\end{aligned}
	\end{equation}
	and the pressure error identity
	\begin{equation}\label{eq:ns:error-p}
		\begin{aligned}
			\nabla(p_k - \ak p) = (P - \Id) \left( \Delta \ak (u - u_k) - \ak \oCdiff(u, \ak \ik u) + \oCdiff(\ak u_k, \ak \ik u) \right).
		\end{aligned}
	\end{equation}
\end{lemma}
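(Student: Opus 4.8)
The plan is to mirror the proof of the Stokes error identities in Lemma~\ref{th:st:error-id}, with the convective terms absorbed into the operator $\oCdiff$. The starting point is a purely algebraic observation: since $(a,b) \mapsto a \cdot \nabla b$ is bilinear, expanding the three defining terms of $\oCdiff$ and collecting contributions shows that all mixed terms cancel, leaving
\[ \oCdiff(u, v) = u \cdot \nabla u - v \cdot \nabla v \]
for sufficiently regular $u, v$. Thus $\oCdiff$ is nothing but the difference of two convection terms, and the reference nonlinearity $(\ak \ik u) \cdot \nabla (\ak \ik u)$ — which is piecewise constant in time, hence left invariant by $\ak$ — will serve as a common pivot that cancels under averaging.

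First I would establish Galerkin orthogonality. Writing the discrete equation \eqref{eq:ns:disc} in its $\ak$-form as in \eqref{eq:st:disc-avg} and subtracting the $\ak$-projection of the continuous equation \eqref{eq:ns} yields, as an identity of $\sdG0$-functions,
\[ \ak \partial_t(u_k - u) - \Delta \ak(u_k - u) + \big[(\ak u_k)\cdot\nabla(\ak u_k) - \ak(u\cdot\nabla u)\big] + \nabla(p_k - \ak p) = 0. \]
The time-derivative term is handled exactly as in Lemma~\ref{th:st:error-id}: since $\partial_t(u_k - \ik u)$ is already a $\sdG0$-function and $\ak \partial_t(u - \ik u) = 0$, one has $\ak\partial_t(u_k - u) = \partial_t(u_k - \ik u)$. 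For the convective term I would insert the pivot using the collapse of $\oCdiff$ together with the $\ak$-invariance of $(\ak\ik u)\cdot\nabla(\ak\ik u)$, obtaining
\[ (\ak u_k)\cdot\nabla(\ak u_k) - \ak(u\cdot\nabla u) = \oCdiff(\ak u_k, \ak\ik u) - \ak\,\oCdiff(u, \ak\ik u). \]

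Applying the Helmholtz projection $P$ — which annihilates $\nabla(p_k - \ak p)$ and commutes with the temporal operators $\ak$ and $\partial_t$ — and rewriting $P\Delta\ak(u_k - u)$ as $P\Delta\ak(u_k - \ik u) - P\Delta\ak(u - \ik u)$ then produces the velocity identity \eqref{eq:ns:error-u} directly. For the pressure identity \eqref{eq:ns:error-p} I would instead keep the unprojected orthogonality, solve it for $\nabla(p_k - \ak p)$, and substitute the expression for $\partial_t(u_k - \ik u)$ furnished by \eqref{eq:ns:error-u}; after regrouping, every surviving term picks up a factor $(P - \Id)$ and the stated identity follows.

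The expansions themselves are routine; the step requiring care is the bookkeeping of temporal projections. One must track precisely which quantities already lie in $\sdG0(\grid_k, \cdot)$ — namely $\partial_t u_k$, $p_k$, and the piecewise-constant nonlinearities $(\ak u_k)\cdot\nabla(\ak u_k)$ and $(\ak\ik u)\cdot\nabla(\ak\ik u)$ — and which acquire an $\ak$ only upon restriction to discrete test functions, so that the pivot nonlinearity cancels cleanly and the identity $\ak\partial_t(u_k-u) = \partial_t(u_k - \ik u)$ is invoked correctly. Everything here is parallel to the Stokes case; the one genuinely new ingredient is the collapse of $\oCdiff$ to a difference of convection terms, which is exactly what makes the pivot cancellation go through.
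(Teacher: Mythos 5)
Your proposal is correct and follows essentially the same route as the paper: the paper likewise reduces everything to the Stokes-case argument of lemma~\ref{th:st:error-id} plus the elementary identity $u\cdot\nabla u - (\ak u_k)\cdot\nabla(\ak u_k) = \oCdiff(u,\ak\ik u) - \oCdiff(\ak u_k,\ak\ik u)$, which is exactly your observation that $\oCdiff(u,v)$ collapses to $u\cdot\nabla u - v\cdot\nabla v$ combined with the $\ak$-invariance of the piecewise-constant pivot. Your write-up simply makes explicit the bookkeeping of temporal projections that the paper leaves as ``elementary calculations.''
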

\begin{proof}
	We proceed just as in the linear case, lemma~\ref{th:st:error-id}. The only exception is the nonlinear term in the Galerkin orthogonality, for which we use the identity
	\[ u \cdot \nabla u - \ak u_k \cdot \nabla \ak u_k = \oCdiff(u, \ak \ik u) - \oCdiff(\ak u_k, \ak \ik u) \]	
	which follows by elementary calculations.
\end{proof}

\begin{lemma}\label{th:ns:nonlin-approx-error}
	Under assumption~\ref{assumption:ns1} there holds
	\begin{align*}
		\nTS{\ak \oCdiff(u, \ak \ik u)}_{\sL2\sH{-1}} \leq \Cas{ns1} k^2, \quad \nTS{\ak\oCdiff(u, \ak \ik u)} \leq \Cas{ns1} k^2.
	\end{align*}
\end{lemma}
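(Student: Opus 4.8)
The plan is to start from the algebraic structure of $\oCdiff$. A direct computation from its definition shows that, writing $w\coloneqq\ak\ik u$, one has $\oCdiff(u,w)=u\cdot\nabla u-w\cdot\nabla w$ (this is the special case behind the identity used in the proof of the previous lemma). Since $w=\ak\ik u$ is piecewise constant in time, so is $w\cdot\nabla w$, and the averaging operator acts as the identity on it. Hence
\[ \ak\oCdiff(u,\ak\ik u)=\ak(u\cdot\nabla u)-(\ak\ik u)\cdot\nabla(\ak\ik u). \]
The whole difficulty is that the naive bound, writing $u\cdot\nabla u-w\cdot\nabla w=(u-w)\cdot\nabla u+w\cdot\nabla(u-w)$, is only linear in $u-w=u-\ak\ik u$, which is merely $O(k)$ by lemma~\ref{th:nodal-midpoint-estimates}. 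The missing order must therefore be extracted from the averaging $\ak$, that is, from the symmetry of the quadrature about the interval midpoint.

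The mechanism I would use is to compare both surviving terms to the common reference $(\mk u)\cdot\nabla(\mk u)=\mk(u\cdot\nabla u)$, the last equality holding because $\mk$ merely samples at $t_{n-\ahalf}$. This produces the splitting
\[ \ak\oCdiff(u,\ak\ik u)=\bigl(\ak(u\cdot\nabla u)-\mk(u\cdot\nabla u)\bigr)+\bigl((\mk u)\cdot\nabla(\mk u)-(\ak\ik u)\cdot\nabla(\ak\ik u)\bigr). \]
For the first bracket I apply lemma~\ref{th:midpoint-quadratic-estimate} with $g\coloneqq u\cdot\nabla u$, bounding it by $Ck^2\nTS{\partial_{tt}(u\cdot\nabla u)}_{\sL\infty X}$; expanding $\partial_{tt}(u\cdot\nabla u)=\partial_{tt}u\cdot\nabla u+2\partial_tu\cdot\nabla\partial_tu+u\cdot\nabla\partial_{tt}u$ and applying the bilinear bounds of lemma~\ref{th:ns:nonlin-props} for $s=0$ together with the regularity $u\in\sC0\sV5\cap\sC1\sV3\cap\sC2\sV1$ from \eqref{reg:ns} controls this by $\Cas{ns1}$. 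For the second bracket I telescope into $(\mk u-\ak\ik u)\cdot\nabla(\mk u)+(\ak\ik u)\cdot\nabla(\mk u-\ak\ik u)$ and use that $\mk u-\ak\ik u=(\mk u-\ak u)+\ak(u-\ik u)$ is $O(k^2)$ in $\sL\infty\sV1$, combining lemma~\ref{th:midpoint-quadratic-estimate} for the first summand with lemma~\ref{th:nodal-int-properties} and the $\sL\infty$-stability of $\ak$ for the second; the bilinear estimates \eqref{eq:nl:2}--\eqref{eq:nl:3} then bound this bracket by $\Cas{ns1}k^2$, the smooth factors $\mk u,\ak\ik u$ being controlled in $\sV2$ by $\nTS{u}_{\sC0\sV2}$.

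Since both bounds are uniform in $t$, passing to the $\sL2(I,X)$-norm costs only a factor $\sqrt T$; taking $X=\sL2(\Omega)$ yields the second claimed estimate, and the first follows immediately from the continuous embedding $\sL2(\Omega)\hookrightarrow\sH{-1}(\Omega)$ (alternatively, repeating the splitting with the $s=-1$ bounds \eqref{eq:nl:sl1}, which would also be the route under reduced regularity). I expect the main obstacle to be conceptual rather than computational: one must recognize that the two surviving terms cannot be estimated individually at order $k^2$ and that inserting the common midpoint reference is precisely what exposes the cancellation supplied by $\ak$. Once that splitting is fixed, the remainder is routine bookkeeping of the bilinear estimates against the regularity \eqref{reg:ns}.
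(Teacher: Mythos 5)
Your proof is correct, but it takes a genuinely different route from the paper's. The paper starts from the same observation that multiplication by the piecewise\--constant $\ak\ik u$ commutes with $\ak$, but then exploits the resulting representation
\[
\ak \oCdiff(u, \ak \ik u) = \ak\bigl((u - \ak \ik u) \cdot \nabla (u - \ak \ik u)\bigr) + \ak (u - \ik u) \cdot \nabla \ak \ik u + \ak \ik u \cdot \nabla \ak (u - \ik u),
\]
in which each term is \emph{individually} of order $k^2$: the first is a product of two $O(k)$ factors, and the other two are linear in $\ak(u - \ak\ik u) = \ak(u-\ik u)$, which is $O(k^2)$ by lemma~\ref{th:nodal-int-properties} and the stability of $\ak$ --- that projection identity is where the paper extracts the extra order from the averaging, rather than from midpoint symmetry. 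You instead insert $\mk(u\cdot\nabla u) = (\mk u)\cdot\nabla(\mk u)$ as a common reference and invoke lemma~\ref{th:midpoint-quadratic-estimate} for the composite $u\cdot\nabla u$, which costs you a bound on $\partial_{tt}(u\cdot\nabla u)$ in $\sL\infty\sL2$; this is available under \eqref{reg:ns} (all three terms of the Leibniz expansion are controlled by \eqref{eq:nl:sl1}, \eqref{eq:nl:s0} together with the bounds on $\nTS{u}_{\sC0\sV5}$, $\nTS{u}_{\sC1\sV3}$, $\nTS{u}_{\sC2\sV1}$), so nothing breaks. What your route buys is a genuinely pointwise\--in\--time ($\sL\infty$) bound, which the paper only derives later by rerunning its argument in theorem~\ref{th:ns:error-linf}; what it costs is that all factors are measured in $\sL\infty$ in time, which is slightly more demanding than the paper's mixed $\sL\infty$/$\sL2$ pairings and would transfer less cleanly to the weighted low\--regularity setting of section~\ref{sec:low-reg:ns}, where lemma~\ref{th:ns:sm:nonlin} reuses the paper's decomposition. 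Your reduction of the $\sH{-1}$ bound to the $\sL2$ bound via $\sL2(\Omega)\hookrightarrow\sH{-1}(\Omega)$ is legitimate; the paper instead runs one argument for $s\in\set{-1,0}$ to keep the bookkeeping uniform.
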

\begin{proof}
	By definition of $\oCdiff$ and the properties of $\ak$ we have that
	\begin{equation}\label{eq:ak-nonlin}
		\begin{aligned}
			\ak \oCdiff(u, \ak \ik u) &= \ak \left( (u - \ak \ik u) \cdot \nabla (u - \ak \ik u) \right) + \ak (u - \ik u) \cdot \nabla \ak \ik u \\
			&\qquad+ \ak \ik u \cdot \nabla \ak (u - \ik u).
		\end{aligned}
	\end{equation}
	Together with \eqref{eq:nl:2}, \eqref{eq:nl:3} and the results from lemmata \ref{th:nodal-int-properties} and~\ref{th:nodal-midpoint-estimates} this yields
	\begin{align*}
		\MoveEqLeft\nTS{\ak \oCdiff(u, \ak \ik u)}_{\sL2 \sH{s}} \\
		&\leq C \left( \nTS{u - \ak \ik u}_{\sL\infty \sV{s+1}} \nTS{u - \ak \ik u}_{\sL2 \sV{2}} + \nTS{u}_{\sC0 \sV{2}} \nTS{\ak (u - \ik u)}_{\sL2\sV{s+1}} \right)\\
		&\leq C k^2 \left( \nTS{\partial_t u}_{\sL\infty \sV{s+1}} \nTS{\partial_t u}_{\sL2\sV{2}} + \nTS{u}_{\sC0 \sV{2}} \nTS{\partial_{tt} u}_{\sL2\sV{s+1}} \right)
	\end{align*}
	if $s \in \{ -1, 0 \}$. The claim follows since by \eqref{reg:ns} the right side is bounded by $\Cas{ns1} k^2$.
\end{proof}

\begin{lemma}\label{th:ns:approx-error-h1}
	Under assumption~\ref{assumption:ns1} there exists $C_0 > 0$ such that there holds
	\[ \nTS{\ik u - u_k}_{\sL\infty \sL2} + \nHH{\ak (\ik u - u_k)}_{\sL2\sV1} \le \Cas{ns1} k^2 \]
	if the stepsize condition $k < C_0 \nTS{\nabla u}_{\sL\infty\sL2}^{-1} \nTS{\Delta u}_{\sL\infty\sL2}^{-1}$ is satisfied.
\end{lemma}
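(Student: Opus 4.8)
The plan is to run a discrete energy argument on the velocity error identity \eqref{eq:ns:error-u}, exploiting the antisymmetry hidden in $\oCdiff$ and closing with the discrete Gronwall lemma~\ref{th:gronwall}. I write $e_k \coloneqq u_k - \ik u$, so that $\nTS{\ik u - u_k} = \nTS{e_k}$ and $e_k^0 = u^0 - u^0 = 0$, and I note that $e_k(t)$, and hence $\ak e_k$, are solenoidal with zero trace. Following the $s=0$ case of lemma~\ref{th:st:disc-stab}, I would test \eqref{eq:ns:error-u} against $e_k \indicatorfunc_{(0, t_n]}$: partial integration in time turns the derivative term into $\tfrac12\nS{e_k^n}_{\sL2}^2$ (the initial contribution drops since $e_k^0 = 0$), and the projection property of $\ak$ turns the Stokes term into $\nTS{\ak e_k}_{\sLJ2\sV1}^2$ with $J = (0, t_n]$. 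Since every term on the right of \eqref{eq:ns:error-u} already lies in $\sdG0$, the same projection property lets me pair each of them against $\ak e_k$ rather than against $e_k$.

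The two data-driven terms are handled exactly as in theorem~\ref{th:st:error-l2}. The linear term $-P\Delta\ak(u - \ik u)$ is estimated by Cauchy-Schwarz in the $\sV1$-inner product and Young's inequality, absorbing a fraction of $\nTS{\ak e_k}_{\sLJ2\sV1}^2$ and leaving $C\nTS{\ak(u - \ik u)}_{\sL2\sV1}^2 \le Ck^4$ via lemma~\ref{th:nodal-int-properties} and \eqref{reg:ns}. For the consistency term $\ak P\oCdiff(u, \ak\ik u)$, I use that $\ak e_k$ is solenoidal to replace $P\ak e_k$ by $\ak e_k$, bound the pairing by $\nTS{\ak\oCdiff(u, \ak\ik u)}_{\sL2\sH{-1}}\nTS{\ak e_k}_{\sL2\sV1}$, and invoke lemma~\ref{th:ns:nonlin-approx-error} together with Young's inequality to again absorb a fraction of $\nTS{\ak e_k}_{\sLJ2\sV1}^2$ at the cost of $Ck^4$.

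The decisive term is $-P\oCdiff(\ak u_k, \ak\ik u)$. Expanding $\oCdiff(\ak u_k, \ak\ik u) = \ak e_k\cdot\nabla\ak e_k + \ak e_k\cdot\nabla\ak\ik u + \ak\ik u\cdot\nabla\ak e_k$ and testing against the solenoidal $\ak e_k$, the two trilinear forms carrying $\ak e_k$ in both the differentiated and outer slots vanish by the standard identity $\spL{a\cdot\nabla b, b} = 0$ for solenoidal $a$ with zero trace, leaving only $-\spL{\ak e_k\cdot\nabla\ak\ik u, \ak e_k}$. This I estimate with \eqref{eq:nl:3b}, $\nS{\ak e_k\cdot\nabla\ak\ik u}_{\sL2} \le C\nS{\ak e_k}_{\sV1}\nS{\nabla\ak\ik u}_{\sL2}^{\frac12}\nS{\Delta\ak\ik u}_{\sL2}^{\frac12}$, followed by Young's inequality on the $\sV1$-factor, absorbing a further fraction of $\nTS{\ak e_k}_{\sLJ2\sV1}^2$ and producing the Gronwall weight $C\nS{\nabla\ak\ik u}_{\sL2}\nS{\Delta\ak\ik u}_{\sL2}\nS{\ak e_k}_{\sL2}^2$. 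Collecting all contributions gives, for each $n$,
\[ \nS{e_k^n}_{\sL2}^2 + \nTS{\ak e_k}_{\sLJ2\sV1}^2 \le Ck^4 + C\int_0^{t_n} \nS{\nabla\ak\ik u}_{\sL2}\nS{\Delta\ak\ik u}_{\sL2}\nS{\ak e_k}_{\sL2}^2 \dif t. \]

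The main obstacle is that, unlike for a purely explicit scheme, the averaging conceals the endpoint value: on $I^m$ one has $\ak e_k = \tfrac12(e_k^{m-1} + e_k^m)$, hence $\nS{\ak e_k}_{\sL2}^2 \le \tfrac12(\nS{e_k^{m-1}}_{\sL2}^2 + \nS{e_k^m}_{\sL2}^2)$, so the integral over $I^n$ feeds back a multiple of the current node value $\nS{e_k^n}_{\sL2}^2$. Bounding the weight by $\nS{\nabla\ak\ik u}_{\sL2}\nS{\Delta\ak\ik u}_{\sL2} \le W$ with $W \coloneqq \nTS{\nabla u}_{\sL\infty\sL2}\nTS{\Delta u}_{\sL\infty\sL2}$, this feedback contribution is $\le Ck_n W\,\nS{e_k^n}_{\sL2}^2$, which can be absorbed into the left-hand side precisely when $k < C_0 W^{-1} = C_0\nTS{\nabla u}_{\sL\infty\sL2}^{-1}\nTS{\Delta u}_{\sL\infty\sL2}^{-1}$; this is exactly the source of the stepsize condition. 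After absorption the remaining sum runs only over nodes $0, \dots, n-1$, so lemma~\ref{th:gronwall} applies with $x_m = \nS{e_k^m}_{\sL2}^2$, $\delta = Ck^4$, and an exponent bounded by $CTW \le \Cas{ns1}$ through \eqref{reg:ns}. Taking the maximum over $n$ yields $\nTS{e_k}_{\sL\infty\sL2} \le \Cas{ns1}k^2$, while the already-absorbed $\sV1$-contribution gives $\nHH{\ak e_k}_{\sL2\sV1} \le \Cas{ns1}k^2$, which is the claim.
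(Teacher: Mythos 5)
Your proposal is correct and follows essentially the same route as the paper's proof: testing the error identity \eqref{eq:ns:error-u} with the error itself on $(0,t_n]$, absorbing the linear and consistency residuals via lemma~\ref{th:nodal-int-properties} and lemma~\ref{th:ns:nonlin-approx-error} with Young's inequality, reducing the discrete nonlinearity to the single term $\spL{\ak e_k\cdot\nabla\ak\ik u,\ak e_k}$ by antisymmetry, and closing with lemma~\ref{th:gronwall} after absorbing the current-node feedback under the stated stepsize condition. Your explicit identification of that feedback term as the origin of the stepsize restriction matches what the paper leaves implicit.
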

\begin{proof}
	Just as in the proof of linear stability, lemma~\ref{th:st:disc-stab} for $s = 0$, we test \eqref{eq:ns:error-u} with $v_k \indicatorfunc_J$ where $v_k \coloneqq \ik u - u_k$ and $J \coloneqq (0, t_n]$ for $n = 1, \dots, N$. This implies
	\begin{align*}
		\MoveEqLeft[1]\tfrac 1 2 \nS{v_k^n}_{\sL2}^2 + \nTS{\ak v_k}_{\sLJ2\sV1}^2 \\
		&= - \spLL{\nabla \ak(u - \ik u), \nabla \ak v_k}_J + \spLL{\ak P \oCdiff(u, \ak \ik u), v_k}_J - \spLL{P \oCdiff(\ak u_k, \ak \ik u), v_k}_J\\
		&\leq C \left(\nTS{u - \ik u}_{\sLJ2 \sV1} + \nTS{\ak P \oCdiff(u, \ak \ik u)}_{\sLJ2 \sV{-1}} \right) \nTS{\ak v_k}_{\sLJ2 \sV1} \\
		&\qquad - \spLL{P \oCdiff(\ak u_k, \ak \ik u), v_k}_J.
	\end{align*}
	Lemma~\ref{th:nodal-int-properties} for the interpolation error and lemma~\ref{th:ns:nonlin-approx-error} for $\oCdiff(u, \ak \ik u)$ imply
	\begin{equation*} 
		\nTS{P \Delta \ak(u - \ik u)}_{\sL2\sV{-1}} \le \Cas{ns1} k^2, \quad \nTS{\ak P \oCdiff(u, \ak \ik u)}_{\sL2 \sV{-1}} \le \Cas{ns1} k^2.
	\end{equation*}
	and hence by Young's inequality
	\begin{equation}\label{eq:th:ns:approx-error-h1:tmp}
		\tfrac 1 2 \nS{v_k^n}_{\sL2}^2 + \tfrac 1 2 \nTS{\ak v_k}_{\sLJ2\sV1}^2 \leq \Cas{ns1} k^2  - \spLL{P \oCdiff(\ak u_k, \ak \ik u), v_k}_J.
	\end{equation}
	For the last term we use the antisymmetry of the nonlinearity, hence implicitly its time discretization, and \eqref{eq:nl:2} with $s = 0$ to estimate 
	\begin{align}\label{eq:th:ns:approx-error-h1:tmp2}
		\MoveEqLeft\abs{\spLL{P \oCdiff(\ak u_k, \ak \ik u), v_k}_J} = \abs{\spLL{\ak v_k \cdot \nabla \ak \ik u, \ak v_k}_J}\\ 
		&\le C \nTS{\nabla u}_{\sL\infty \sL2}^\ahalf \nTS{\Delta u}_{\sL\infty \sL2}^\ahalf \nTS{\ak v_k}_{\sLJ2\sV1} \nTS{\ak v_k}_{\sLJ2\sL2}\nonumber\\
		&\le C \nTS{\nabla u}_{\sL\infty \sL2} \nTS{\Delta u}_{\sL\infty \sL2} \nTS{\ak v_k}_J^2 + \tfrac 14 \nTS{\ak v_k}^2_{\sLJ2\sV1}.\nonumber
	\end{align}
	The last term will be moved to the left of \eqref{eq:th:ns:approx-error-h1:tmp}. Since
	\[
		\begin{aligned}
			\nTS{\ak v_k}_J^2 = \frac 1 4 \sum_{j = 1}^n k_j\nS{v_k^j + v_k^{j-1}}_{\sL2}^2 \le C \sum_{j = 1}^n k_j \nS{v_k^j}_{\sL2}^2
		\end{aligned}
	\]
	we can use \eqref{eq:th:ns:approx-error-h1:tmp2} in \eqref{eq:th:ns:approx-error-h1:tmp} and conclude the proof with Gronwall's inequality from lemma~\ref{th:gronwall} if $k < C_0 \nTS{\nabla u}_{\sL\infty\sL2}^{-1} \nTS{\Delta u}_{\sL\infty\sL2}^{-1}$ for some $C_0 > 0$ independent of the data. 
\end{proof}

\begin{lemma}\label{th:ns:approx-error-l2}
	Under assumption~\ref{assumption:ns1} there exists $C_0 > 0$ such that there holds
	\begin{equation}\label{eq:ns:approx-error-l2}
		\nTS{\ik u - u_k}_{\sL\infty \sV1} + \nTS{\ak (\ik u - u_k)}_{\sL2\sV2} \le \Cas{ns1} k^2
	\end{equation}
	if the stepsize condition $k < C_0 \nTS{\nabla u}_{\sL\infty\sL2}^{-1} \nTS{\Delta u}_{\sL\infty\sL2}^{-1}$ is satisfied.
\end{lemma}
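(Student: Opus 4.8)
The plan is to run the energy argument of lemma~\ref{th:ns:approx-error-h1} one order higher in space. Writing $v_k\coloneqq u_k-\ik u$ (so that $\nTS{\ik u-u_k}=\nTS{v_k}$) and $e_k\coloneqq\ak v_k$, I would test the velocity error identity~\eqref{eq:ns:error-u} with $(-P\Delta)v_k\indicatorfunc_J$, $J\coloneqq(0,t_n]$, which is exactly the test function of the discrete stability estimate, lemma~\ref{th:st:disc-stab}, for $s=1$. Since $v_k^0=u^0-u^0=0$ and every summand on the right of~\eqref{eq:ns:error-u} is a $\sdG0$-function, the projection property of $\ak$ lets me replace the test function by $(-P\Delta)e_k$ in each pairing; partial integration in time and $\spLL{-P\Delta\ak v_k,(-P\Delta)e_k}_J=\nTS{e_k}_{\sLJ2\sV2}^2$ then give the energy identity
\[
\tfrac12\nS{v_k^n}_{\sV1}^2+\nTS{e_k}_{\sLJ2\sV2}^2=\spLL{-P\Delta\ak(u-\ik u)+\ak P\oCdiff(u,\ak\ik u)-P\oCdiff(\ak u_k,\ak\ik u),\,(-P\Delta)e_k}_J.
\]

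The first two terms on the right are consistency errors. By Cauchy--Schwarz in time and Young's inequality they are bounded by $\epsilon\nTS{e_k}_{\sLJ2\sV2}^2$ plus $\tfrac{C}{\epsilon}$ times $\nTS{\ak(u-\ik u)}_{\sL2\sV2}^2$ and $\nTS{\ak\oCdiff(u,\ak\ik u)}^2$ respectively, which are $\le\Cas{ns1}k^4$ by lemma~\ref{th:nodal-int-properties} and lemma~\ref{th:ns:nonlin-approx-error}. Expanding $\oCdiff(\ak u_k,\ak\ik u)=e_k\cdot\nabla e_k+e_k\cdot\nabla(\ak\ik u)+(\ak\ik u)\cdot\nabla e_k$, the two contributions that are linear in $e_k$ are estimated with~\eqref{eq:nl:2} and~\eqref{eq:nl:3} by $C\nS{\ak\ik u}_{\sV2}\nS{e_k}_{\sV1}\nS{e_k}_{\sV2}$; since $\nTS{\ak\ik u}_{\sL\infty\sV2}\le\nTS{u}_{\sC0\sV2}\le\Cas{ns1}$ and, crucially, $\nTS{e_k}_{\sL2\sV1}\le\Cas{ns1}k^2$ is \emph{already known} from lemma~\ref{th:ns:approx-error-h1}, a further Young step turns them into $\Cas{ns1}k^4+\epsilon\nTS{e_k}_{\sLJ2\sV2}^2$. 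All the $\epsilon$-terms are absorbed on the left.

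The remaining quadratic self-interaction $\spLL{e_k\cdot\nabla e_k,(-P\Delta)e_k}_J$ is the main obstacle. In lemma~\ref{th:ns:approx-error-h1} the analogous term vanished identically, because there the test function was $e_k$ itself and $\spL{e_k\cdot\nabla e_k,e_k}=0$ by antisymmetry; here the test function is $(-P\Delta)e_k$ and no cancellation is available. Using $\sH2(\Omega)\hookrightarrow\sL\infty(\Omega)$ as in lemma~\ref{th:ns:nonlin-props}, I would estimate
\[
\abs{\spLL{e_k\cdot\nabla e_k,\,(-P\Delta)e_k}_J}\le C\int_J\nS{e_k}_{\sL\infty}\nS{\nabla e_k}_{\sL2}\nS{e_k}_{\sV2}\dif t\le C\nTS{e_k}_{\sL\infty\sV1}\nTS{e_k}_{\sLJ2\sV2}^2,
\]
so that both factors of the bound are precisely the norms being estimated (note $\nTS{e_k}_{\sL\infty\sV1}\le\nTS{v_k}_{\sL\infty\sV1}$). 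This term can therefore only be absorbed once $\nTS{v_k}_{\sL\infty\sV1}$ is known to be small, which is exactly where the stepsize condition enters.

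I expect the hard part to be closing this absorption rigorously. The natural route is a continuation (bootstrap) argument in $n$: on the largest initial set of nodes on which $C\nTS{v_k}_{\sL\infty\sV1}\le\tfrac12$, the quadratic term is absorbed into $\nTS{e_k}_{\sLJ2\sV2}^2$ and the remaining estimates yield $\nS{v_k^n}_{\sV1}^2+\nTS{e_k}_{\sLJ2\sV2}^2\le\Cas{ns1}k^4$; the stepsize condition $k<C_0\nTS{\nabla u}_{\sL\infty\sL2}^{-1}\nTS{\Delta u}_{\sL\infty\sL2}^{-1}$ provides exactly the smallness needed to keep this a priori bound below the threshold $\tfrac12$, so—just as the analogous condition closed Gronwall's inequality, lemma~\ref{th:gronwall}, in lemma~\ref{th:ns:approx-error-h1}—the admissible range must reach $n=N$. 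A maximum over $n=1,\dots,N$ and a square root then give~\eqref{eq:ns:approx-error-l2}.
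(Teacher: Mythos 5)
Your overall architecture matches the paper's: apply the $s=1$ energy/stability argument to \eqref{eq:ns:error-u}, bound the two consistency terms by $\Cas{ns1}k^2$ via lemmata~\ref{th:nodal-int-properties} and~\ref{th:ns:nonlin-approx-error}, handle the terms of $\oCdiff(\ak u_k,\ak\ik u)$ that are linear in $\ak v_k$ using the already established bound $\nTS{\ak v_k}_{\sL2\sV1}\le\Cas{ns1}k^2$ from lemma~\ref{th:ns:approx-error-h1}, and absorb what remains. The deviation is in the quadratic self-interaction, and it is where your argument has a real gap. Estimating $\nS{\ak v_k\cdot\nabla\ak v_k}_{\sL2}$ by $\nS{\ak v_k}_{\sL\infty}\nS{\nabla\ak v_k}_{\sL2}$ leaves you with a term $C\nTS{\ak v_k}_{\sL\infty\sV1}\nTS{\ak v_k}_{\sL2\sV2}^2$ whose coefficient in front of the quantity to be absorbed is itself solution-dependent; closing the resulting bootstrap requires $C\,\Cas{ns1}k^2\le\tfrac12$, i.e.\ a smallness condition on $k$ relative to the \emph{full} data constant $\Cas{ns1}$ (which contains $\nTS{u^0}_{\sV5}$, $\nTS{f}_{C^0\sH3}$, etc.). That is not implied by, and is of a different nature than, the stated condition $k<C_0\nTS{\nabla u}_{\sL\infty\sL2}^{-1}\nTS{\Delta u}_{\sL\infty\sL2}^{-1}$, which in the paper enters only through lemma~\ref{th:ns:approx-error-h1}. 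So as written you prove a weaker lemma with an extra stepsize restriction.

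The paper avoids the bootstrap entirely. It bounds the whole nonlinear residual in $\sL2\sL2$ using the sharper Agmon-type estimates \eqref{eq:nl:2b}, \eqref{eq:nl:3b}, so that only a \emph{half} power of the top norm appears:
\begin{equation*}
\nTS{\oCdiff(\ak u_k,\ak\ik u)}\le C\bigl(\nTS{\ak v_k}_{\sL\infty\sV1}\nTS{\ak v_k}_{\sL2\sV1}^{\frac12}\nTS{\ak v_k}_{\sL2\sV2}^{\frac12}+\nTS{u}_{\sC0\sV2}\nTS{\ak v_k}_{\sL2\sV1}\bigr),
\end{equation*}
then uses the inverse inequality $\nTS{\ak v_k}_{\sL\infty\sV1}\le Ck^{-\frac12}\nTS{\ak v_k}_{\sL2\sV1}\le\Cas{ns1}$ and Young's inequality to obtain $\Cas{ns1}(\delta)k^2+\delta\nTS{\ak v_k}_{\sL2\sV2}$ with an \emph{absolute} $\delta$, which the fixed constant of the stability estimate \eqref{eq:disc-stab:div} absorbs unconditionally. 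If you replace your $\sL\infty$-embedding step by \eqref{eq:nl:2b} in your energy identity, the same mechanism works there and your continuation argument becomes unnecessary; without that replacement the absorption cannot be closed under the stated hypothesis.
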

\begin{proof}
	We use the stability estimate \eqref{eq:disc-stab:div} with $s = 1$ for the error equation \eqref{eq:ns:error-u}: 
	\begin{equation}\label{eq:th:ns:approx-error-l2:tmp0}
		\begin{aligned}
			\MoveEqLeft\nTS{\ik u - u_k}_{\sL\infty \sV1} + \nTS{\ak (\ik u - u_k)}_{\sL2\sV2} \\
			&\le C \left( \nTS{P \Delta \ak (u - \ik u)} + \nTS{\ak P \oCdiff(u, \ak \ik u)} + \nTS{P \oCdiff(\ak u_k, \ak \ik u)} \right)
		\end{aligned}
	\end{equation}
	For the first and second term on the right we have by lemmata~\ref{th:nodal-int-properties} and \ref{th:ns:nonlin-approx-error}:
	\begin{equation}\label{eq:th:ns:approx-error-l2:tmp1}
		\nTS{P \Delta \ak (u - \ik u)} \le \Cas{ns1} k^2, \quad \nTS{\ak P \oCdiff(u, \ak \ik u)} \leq \Cas{ns1} k^2.
	\end{equation}
	For the third term on the right we abbreviate $v_k \coloneqq \ik u - u_k$ and have by \eqref{eq:nl:2b}, \eqref{eq:nl:3b}:
	\begin{align*}
		\nTS{\oCdiff(\ak u_k, \ak \ik u)} &\leq C \left( \nTS{\ak v_k}_{\sL\infty \sV1} \nTS{\ak v_k}^{\frac 12}_{\sL2 \sV1} \nTS{\ak v_k}^{\frac 12}_{\sL2 \sV2} + \nTS{u}_{\sC0 \sV2} \nTS{\ak v_k}_{\sL2 \sV1} \right)\\
		&\leq C(\delta) \nTS{\ak v_k}_{\sL2 \sV1} \left( \nTS{\ak v_k}^2_{\sL\infty \sV1} + \nTS{u}_{\sC0 \sV2} \right) + \delta \nTS{\ak v_k}_{\sL2 \sV2}
	\end{align*}
	for arbitrary $\delta > 0$. By lemma~\ref{th:ns:approx-error-h1} we have $\nTS{\ak v_k}_{\sL2 \sV1} \leq \Cas{ns1} k^2$ and with the inverse inequality also $\nTS{\ak v_k}_{\sL\infty \sV1} \leq C k^{-\frac12} \nTS{\ak v_k}_{\sL2 \sV1} \leq \Cas{ns1}$, hence
	\begin{equation}\label{eq:th:ns:approx-error-l2:nonlin}
		\nTS{\oCdiff(\ak u_k, \ak \ik u)} \leq \Cas{ns1}(\delta) k^2 + \delta \nTS{\ak v_k}_{\sL2 \sV2}.
	\end{equation}
	Using \eqref{eq:th:ns:approx-error-l2:tmp1} and \eqref{eq:th:ns:approx-error-l2:nonlin} in the stability estimate we arrive at \eqref{eq:ns:approx-error-l2} after choosing $\delta$ small enough such that $\nTS{\ak v_k}_{\sL2 \sV2}$ in \eqref{eq:th:ns:approx-error-l2:nonlin} can be moved to the left-hand side of \eqref{eq:th:ns:approx-error-l2:tmp0}.
\end{proof}

\begin{theorem}\label{th:ns:error-l2}
	Let assumption~\ref{assumption:ns1} hold. Then there exists $C_0 > 0$ such that the Crank-Nicolson time discretization \eqref{eq:ns:disc} of the Navier-Stokes equations \eqref{eq:ns} satisfies the a priori bound
  \[ \nTS{u - u_k}_{\sL\infty \sV1} + \nTS{\ak (u - u_k)}_{\sL2\sV2} +
  \nTS{\ak p - p_k}_{\sL2 \sQ1} \le  \Cas{ns1} k^2 \]
	if the stepsize condition $k < C_0 \nTS{\nabla u}_{\sL\infty\sL2}^{-1} \nTS{\Delta u}_{\sL\infty\sL2}^{-1}$ is satisfied.
\end{theorem}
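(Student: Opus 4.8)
The plan is to mirror the proof of theorem~\ref{th:st:error-l2} for the Stokes case, replacing the linear stability argument and error identities by their nonlinear counterparts. The velocity part is already essentially packaged in the preparatory lemmas, and the pressure part is read off from the nonlinear pressure error identity~\eqref{eq:ns:error-p}. Throughout I assume the stepsize condition $k < C_0 \nTS{\nabla u}_{\sL\infty\sL2}^{-1} \nTS{\Delta u}_{\sL\infty\sL2}^{-1}$ required by lemmata~\ref{th:ns:approx-error-h1} and~\ref{th:ns:approx-error-l2}.

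First I would treat the velocity error through the splitting $u - u_k = (u - \ik u) + (\ik u - u_k)$. For the interpolation part I invoke lemma~\ref{th:nodal-int-properties} to bound $\nTS{u - \ik u}_{\sL\infty\sV1}$ and, using stability of $\ak$, $\nTS{\ak(u - \ik u)}_{\sL2\sV2}$ by $Ck^2\nTS{\partial_{tt}u}$, which is $\le \Cas{ns1}k^2$ by the regularity bound~\eqref{reg:ns}. For the discrete part $\ik u - u_k$ I apply lemma~\ref{th:ns:approx-error-l2} directly, obtaining $\nTS{\ik u - u_k}_{\sL\infty\sV1} + \nTS{\ak(\ik u - u_k)}_{\sL2\sV2} \le \Cas{ns1}k^2$. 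The triangle inequality then yields the first two terms of the claimed bound.

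For the pressure I would start from~\eqref{eq:ns:error-p} and apply Poincar\'e's inequality on $\sQ1$ to reduce $\nTS{\ak p - p_k}_{\sL2\sQ1}$ to the $\sL2(\sL2)$-norm of $\nabla(\ak p - p_k)$, hence to the three contributions on the right of~\eqref{eq:ns:error-p}, using that $P - \Id$ is bounded on $\sL2$. The term $(P-\Id)\Delta\ak(u - u_k)$ is controlled by $\nTS{\ak(u - u_k)}_{\sL2\sV2}$, just established; the term $(P-\Id)\ak\oCdiff(u,\ak\ik u)$ is bounded by the second estimate of lemma~\ref{th:ns:nonlin-approx-error}. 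The remaining nonlinear term $\oCdiff(\ak u_k, \ak\ik u)$ is exactly the quantity estimated in~\eqref{eq:th:ns:approx-error-l2:nonlin}, so combined with the already-known bound $\nTS{\ak(\ik u - u_k)}_{\sL2\sV2} \le \Cas{ns1}k^2$ it also satisfies $\nTS{\oCdiff(\ak u_k,\ak\ik u)} \le \Cas{ns1}k^2$. Collecting the three contributions gives $\nTS{\ak p - p_k}_{\sL2\sQ1} \le \Cas{ns1}k^2$.

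The main obstacle is the discrete-solution-dependent nonlinear term $\oCdiff(\ak u_k, \ak\ik u)$, which enters both the velocity stability argument and the pressure identity. Its control hinges on the superconvergence estimate for $\ak(\ik u - u_k)$ in $\sL2\sV2$, which is itself only available after bootstrapping from the weaker $\sL2\sV1$-bound of lemma~\ref{th:ns:approx-error-h1}; the stepsize condition enters precisely to absorb this term via Gronwall's inequality, lemma~\ref{th:gronwall}. Once those lemmas are in place, this final theorem is merely an assembly of the pieces, and no genuinely new difficulty arises at this step.
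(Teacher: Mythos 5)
Your proposal is correct and follows essentially the same route as the paper's own proof: the identical splitting $u - u_k = (u - \ik u) + (\ik u - u_k)$ handled by lemmata~\ref{th:nodal-int-properties} and~\ref{th:ns:approx-error-l2} for the velocity, and the pressure identity~\eqref{eq:ns:error-p} with Poincar\'e's inequality, lemma~\ref{th:ns:nonlin-approx-error}, and the reuse of~\eqref{eq:th:ns:approx-error-l2:nonlin} for the discrete nonlinear term. No further comment is needed.
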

\begin{proof}
	With $u - u_k = (u - \ik u) + (\ik u - u_k)$ the estimates for the velocity follow by lemma~\ref{th:ns:approx-error-l2} and the interpolation error estimate from lemma~\ref{th:nodal-int-properties}. To estimate the pressure error we use \eqref{eq:ns:error-p} and Poincar\'e's inequality to get
	\begin{equation*}
		\nTS{\ak p - p_k}_{\sL2 \sQ1} \leq C \left( \nTS{\ak (u - u_k)}_{\sL2 \sV2} + \nTS{\ak \oCdiff(u, \ak \ik u)} + \nTS{\oCdiff(\ak u_k, \ak \ik u)} \right).
	\end{equation*}
	The $k^2$-bound follows for the first term using the established velocity estimate and for the second by lemma~\ref{th:ns:nonlin-approx-error}. For the third term we use \eqref{eq:th:ns:approx-error-l2:nonlin} from the proof of lemma~\ref{th:ns:approx-error-l2}:
	\begin{equation}\label{eq:th:ns:error-l2:nonlin}
		\nTS{\oCdiff(\ak u_k, \ak \ik u)} \leq \Cas{ns1} k^2 + C \nTS{\ak (u_k - \ik u)}_{\sL2 \sV2} \leq \Cas{ns1} k^2.
	\end{equation}
	This concludes the proof that $\nTS{\ak p - p_k}_{\sL2 \sQ1} \leq \Cas{ns1} k^2$.
\end{proof}

We extend the assumptions on the data to obtain
$\sL\infty$-estimates for the pressure.
\begin{assumption}\label{assumption:ns2}
	We assume $\Omega$ has a $C^6$-boundary, the right-hand side has regularity
  $f\in
  C^0(\bar I, \sH4(\Omega))\cap C^1(\bar I, \sH2(\Omega))\cap
  C^2(\bar I, \sL2(\Omega))\cap \sH3(I, \sV{-1})$ and the initial
	 data $u^0 \in \sV6$ satisfies the compatibility conditions \eqref{comp:ns1} and \eqref{comp:ns2}. We write
  \[
  \Cas{ns2} \coloneqq
	C( \nTS{u^0}_{\sV6}, \nTS{f}_{C^0\sH4}, \nTS{f}_{C^1\sH2}, \nTS{f}_{C^2\sL2}, \nTS{f}_{\sH3\sV{-1}} )
  \]
	where $C(\cdots) > 0$ is a function independent of the data and, by abuse of notation, may change with each occurrence of $\Cas{ns2}$.
\end{assumption}
It is shown in~\cite[theorem 3.1 and theorem
  3.2]{Temam1982} that under exactly these conditions the solution
satisfies
\begin{equation}\label{reg:ns:higher}
	\nTS{u}_{\sC0 \sV6} + \nTS{u}_{\sC1 \sV4} + \nTS{u}_{\sC2\sV2} + \nTS{u}_{\sH2\sV3} 
	+ \nTS{p}_{\sC2\sQ1} + \nTS{p}_{\sH2 \sQ2}
\le \Cas{ns2}. 
\end{equation}

\begin{theorem}\label{th:ns:error-linf}
	Let assumption~\ref{assumption:ns2} hold. Then there exists $C_0 > 0$ such that the Crank-Nicolson time discretization \eqref{eq:ns:disc} of the Navier-Stokes equations \eqref{eq:ns} satisfies the a priori error estimate
  \[ \nTS{u - u_k}_{\sL\infty \sV2} + \nTS{\ak(u - u_k)}_{\sL2 \sV3} +
  \nTS{\mk p - p_k}_{\sL\infty \sQ1} \leq \Cas{ns2} k^2, \]
	if the stepsize condition $k < C_0 \nTS{\nabla u}_{\sL\infty\sL2}^{-1} \nTS{\Delta u}_{\sL\infty\sL2}^{-1}$ is satisfied.
	In particular, the discrete pressure $p_k$ must be compared to the continuous pressure $p$ evaluated at interval midpoints for second order convergence.
\end{theorem}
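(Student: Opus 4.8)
The plan is to follow the three-part strategy of the Stokes $\sL\infty$-estimate in theorem~\ref{th:st:error-linf}, now built on top of the Navier--Stokes $\sL2$-machinery of theorem~\ref{th:ns:error-l2}: first upgrade the velocity estimate by one order of regularity, then feed this into the pressure identity \eqref{eq:ns:error-p} measured in $\sL\infty\sL2$, and finally pass from $\ak p$ to the midpoint values $\mk p$ via lemma~\ref{th:midpoint-quadratic-estimate}. Throughout, the higher regularity \eqref{reg:ns:higher} available under assumption~\ref{assumption:ns2} replaces \eqref{reg:ns}, and the stepsize condition is inherited from the lower-order estimates the argument relies on.

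For the velocity I would re-run lemmas~\ref{th:ns:approx-error-h1} and~\ref{th:ns:approx-error-l2} one order higher, applying the discrete stability estimate \eqref{eq:disc-stab:div} with $s = 2$ to the error identity \eqref{eq:ns:error-u} with $v_k \coloneqq \ik u - u_k$. This requires controlling the right-hand side $-P\Delta\ak(u-\ik u) + \ak P\oCdiff(u,\ak\ik u) - P\oCdiff(\ak u_k,\ak\ik u)$ in $\sL2\sV1$. The interpolation term is handled by lemma~\ref{th:nodal-int-properties} together with $\nTS{u}_{\sH2\sV3}\le\Cas{ns2}$; the term $\ak P\oCdiff(u,\ak\ik u)$ by a version of lemma~\ref{th:ns:nonlin-approx-error} in $\sL2\sV1$, which follows from the splitting \eqref{eq:ak-nonlin} now estimated with \eqref{eq:nl:sge1} for $s=1$ and the higher norms in \eqref{reg:ns:higher}; and the genuinely discrete term $P\oCdiff(\ak u_k,\ak\ik u)$ exactly as in lemma~\ref{th:ns:approx-error-l2}, separating a small multiple $\delta\nTS{\ak v_k}_{\sL2\sV3}$ that is absorbed on the left while the remaining factors are bounded by the already-established lower-order estimates $\nTS{\ak v_k}_{\sL2\sV2},\nTS{\ak v_k}_{\sL\infty\sV1}\le\Cas{ns1}k^2$. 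This yields the first two terms $\nTS{u-u_k}_{\sL\infty\sV2} + \nTS{\ak(u-u_k)}_{\sL2\sV3}\le\Cas{ns2}k^2$ of the claim, and in particular $\nTS{\ak v_k}_{\sL\infty\sV2}\le\Cas{ns2}k^2$ since $\ak$ is stable in $\sL\infty\sV2$.

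For the pressure I would apply $(P-\Id)$ to \eqref{eq:ns:error-p} and measure in $\sL\infty\sL2$, so that Poincar\'e's inequality on $\sQ1$ gives $\nTS{\ak p - p_k}_{\sL\infty\sQ1}\le C\nTS{\nabla(\ak p - p_k)}_{\sL\infty\sL2}$. The linear contribution $(P-\Id)\Delta\ak(u-u_k)$ is bounded by $C\nTS{u-u_k}_{\sL\infty\sV2}$ from the velocity estimate. The two nonlinear contributions need $\sL\infty$-in-time analogues of the earlier bounds: $\ak\oCdiff(u,\ak\ik u)$ in $\sL\infty\sL2$ from the splitting \eqref{eq:ak-nonlin} with lemmas~\ref{th:nodal-int-properties} and~\ref{th:nodal-midpoint-estimates} taken at $p=\infty$, and $\oCdiff(\ak u_k,\ak\ik u)$ in $\sL\infty\sL2$ estimated termwise by \eqref{eq:nl:2b} and~\eqref{eq:nl:3b}, reducing each term to products of $\nTS{\ak v_k}_{\sL\infty\sV1}$, $\nTS{\ak v_k}_{\sL\infty\sV2}$ and $\nTS{u}_{\sC0\sV2}$, all $\le\Cas{ns2}k^2$ by the step above. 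Adding the midpoint comparison $\nTS{\ak p - \mk p}_{\sL\infty\sQ1}\le Ck^2\nTS{\partial_{tt}p}_{\sL\infty\sQ1}\le\Cas{ns2}k^2$ from lemma~\ref{th:midpoint-quadratic-estimate} and \eqref{reg:ns:higher} closes the pressure bound.

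The main obstacle is the passage from $\sL2$- to $\sL\infty$-in-time control of the discrete nonlinear term $\oCdiff(\ak u_k,\ak\ik u)$: unlike the linear Stokes case, this quantity is not directly supplied by the discrete stability estimate, which only furnishes $\ak v_k$ in $\sL2\sV3$. The resolution is to obtain $\ak v_k$ in $\sL\infty\sV2$ from the upgraded velocity estimate (via stability of $\ak$ in $\sL\infty\sV2$) and to exploit the superconvergence of all difference factors, so that the quadratic and mixed terms carry enough powers of $k$ to remain $O(k^2)$; closing the upgraded velocity estimate itself, with the $\delta$-absorption of $\nTS{\ak v_k}_{\sL2\sV3}$ bootstrapped against the lower-order bounds, is the technically delicate point.
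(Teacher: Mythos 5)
Your proposal follows essentially the same route as the paper: stability estimate \eqref{eq:disc-stab:div} with $s=2$ applied to \eqref{eq:ns:error-u}, the nonlinear residuals handled through \eqref{eq:ak-nonlin}, \eqref{eq:nl:sge1} and the already-established lower-order bounds, then the pressure identity \eqref{eq:ns:error-p} in $\sL\infty\sL2$ and lemma~\ref{th:midpoint-quadratic-estimate} for the midpoint comparison. The only cosmetic deviation is that the paper avoids a $\delta$-absorption at the $\sV3$-level by bounding $\nTS{\ak(\ik u - u_k)}_{\sL\infty\sV2}$ via the inverse inequality, but your absorption variant works equally well.
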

\begin{proof}
	We again split $u - u_k = (u - \ik u) + (\ik u - u_k)$. By lemma~\ref{th:nodal-int-properties} there holds
	\begin{equation*}
		\nTS{u - \ik u}_{\sL\infty \sV2} + \nTS{\ak(u - \ik u)}_{\sL2 \sV3} \leq C k^2 \left( \nTS{\partial_{tt} u}_{\sL\infty \sV2} + \nTS{\partial_{tt} u}_{\sL2 \sV3} \right) \leq \Cas{ns2} k^2.
	\end{equation*}
	For $\ik u - u_k$ we use the stability estimate \eqref{eq:disc-stab:div} from lemma~\ref{th:st:disc-stab} with $s = 2$ for \eqref{eq:ns:error-u}. This implies, abbreviating $v_k \coloneqq \ik u - u_k$:
	\begin{equation}\label{eq:th:ns:error-linf:tmpA}
		\begin{aligned}
			\nTS{v_k}_{\sL\infty \sV2} + \nTS{\ak v_k}_{\sL2\sV3} \leq C \bigl( &\nTS{\ak (u - \ik u)}_{\sL2 \sV3} + \nTS{\ak \oCdiff(u, \ak \ik u)}_{\sL2 \sH1} \\
			&\quad+ \nTS{\oCdiff(\ak u_k, \ak \ik u)}_{\sL2 \sH1} \bigr).
		\end{aligned}
	\end{equation}
	The first term on the right can be estimated as above. For the nonlinear terms we only have to derive estimates for the gradients due to lemma~\ref{th:ns:nonlin-approx-error} and \eqref{eq:th:ns:error-l2:nonlin}. Combining \eqref{eq:nl:sge1} and the representation \eqref{eq:ak-nonlin} of $\ak \oCdiff(u, \ak \ik u)$ we get
	\begin{align*}
		\MoveEqLeft\norm{\nabla \ak \oCdiff(u, \ak \ik u)} \\
		&\leq C \left( \nTS{u - \ak \ik u}_{\sL\infty \sV2} \nTS{u - \ak \ik u}_{\sL2 \sV2} + \nTS{\ak \ik u}_{\sL\infty \sV2} \nTS{\ak (u- \ik u)}_{\sL2 \sV2} \right)\\
		&\leq C k^2 \left( \nTS{\partial_t u}_{\sL\infty \sV2} \nTS{\partial_t u}_{\sL2 \sV2} + \nTS{u}_{\sC0 \sV2} \nTS{\partial_{tt} u}_{\sL2 \sV2} \right) \leq \Cas{ns2} k^2.
	\end{align*}
	For the second nonlinear term in \eqref{eq:th:ns:error-linf:tmpA} we proceed similarly, yielding
	\[ \nTS{\nabla \oCdiff(\ak u_k, \ak \ik u)} \leq C \nTS{\ak v_k}_{\sL2 \sV2} \left( \nTS{\ak v_k}_{\sL\infty \sV2} + \nTS{u}_{\sC0 \sV2} \right) \leq \Cas{ns2} k^2 \]
	using that $\nTS{\ak v_k}_{\sL2 \sV2} \leq \Cas{ns2} k^2$ by lemma~\ref{th:ns:approx-error-l2} and, by the inverse inequality, also $\nTS{\ak v_k}_{\sL\infty \sV2} \leq \Cas{ns2}$. Inserting the previous estimates into \eqref{eq:th:ns:error-linf:tmpA} we arrive at
	\begin{equation*}
		\nTS{v_k}_{\sL\infty \sV2} + \nTS{\ak v_k}_{\sL2\sV3} \leq \Cas{ns2} k^2
	\end{equation*}
	which together with the estimates for $u - \ik u$ concludes the proof for the velocity errors. For the pressure error we get from the error identity~\eqref{eq:ns:error-p} that
	\begin{equation}\label{eq:th:ns:error-linf:tmp1}
		\begin{aligned}
			\MoveEqLeft\nTS{\ak p - p_k}_{\sL\infty \sQ1} \\
			&\leq C \left( \nTS{u - u_k}_{\sL\infty \sV2} + \nTS{\oCdiff(u, \ak \ik u)}_{\sL\infty \sL2} + \nTS{\oCdiff(\ak u_k, \ak \ik u)}_{\sL\infty \sL2} \right).
		\end{aligned}
	\end{equation}
	For the first term on the right we use the established velocity estimate. For the second term on the right of \eqref{eq:th:ns:error-linf:tmp1} we repeat the arguments from the proof of lemma~\ref{th:ns:nonlin-approx-error} with $\sL\infty$-in-time estimates:
	\[ \nTS{\oCdiff(u, \ak \ik u)}_{\sL\infty\sL2} \leq C k^2 \left ( \nTS{\partial_t u}_{\sL\infty\sV1} \nTS{\partial_t u}_{\sL\infty\sV2} + \nTS{\partial_{tt} u}_{\sL\infty\sV1} \nTS{u}_{\sC0\sV2} \right). \]
	Using lemma~\ref{th:ns:nonlin-props} we get for the third term on the right of \eqref{eq:th:ns:error-linf:tmp1}:
	\[ \nTS{\oCdiff(\ak u_k, \ak \ik u)}_{\sL\infty \sL2} \leq C \nTS{\ak v_k}_{\sL\infty \sV1} \left( \nTS{\ak v_k}_{\sL\infty \sV2} + \nTS{u}_{\sC0 \sV2} \right) \leq \Cas{ns2} k^2 \]
	using $\nTS{v_k}_{\sL\infty \sV1} \leq \Cas{ns2} k^2$ by theorem~\ref{th:ns:error-l2} and $\nTS{\ak v_k}_{\sL\infty \sV2} \leq \Cas{ns2}$ by the inverse inequality. Combining these estimates for the right-hand side of \eqref{eq:th:ns:error-linf:tmp1} with lemma~\ref{th:midpoint-quadratic-estimate} we can hence conclude the proof since
	\begin{equation*}
		\nTS{\mk p - p_k}_{\sL\infty \sQ1} \leq \nTS{\ak p - \mk p}_{\sL\infty \sQ1} + \nTS{\ak p - p_k}_{\sL\infty \sQ1} \leq \Cas{ns2} k^2.\qedhere
	\end{equation*}
\end{proof}

\section{Estimates with Low Regularity} \label{sec:low-reg}

The previous estimates require a priori bounds which can 
only be obtained by strong assumptions on the initial data, in particular 
nonlocal compatibility
conditions~(\ref{comp:stokes1}),~(\ref{comp:stokes2}) for the Stokes
and~(\ref{comp:ns1}),~(\ref{comp:ns2}) for the Navier Stokes equations. These
conditions are hard to verify and do not hold in general.

To avoid this requirement we derive error estimates exploiting the smoothing of the solution operator, quantified through the smoothing function $\tau\colon I \to \mathbb R$ and its discrete counterpart $\tau_k \in \sdG0(\grid_k, \mathbb R)$ defined as
\[
\tau:=\min\{t,1\}, \quad \restr{\tau_k}{I^n} \coloneqq \min\{ t_{n-1},1\} \quad\text{for $n=1,\dots,N$}.
\]
Note that in particular $\tau_k\big|_{I^1} \equiv 0$ and $\tau_k \le \tau$. We remind of remark~\ref{rm:approx-weights}: The presence of the smoothing function does not affect the error estimates for the operators $\ak$, $\ik$ and $\mk$, this will be used without mention in the following. By weighting norms with powers of the smoothing function and by introducing some few initial steps with the backward Euler scheme we will derive optimal order error estimates that do not require nonlocal compatibility conditions. 

\subsection{Stokes Equations}

Throughout this section we make\ldots

\begin{assumption}\label{as:time1+}
	In addition to assumption~\ref{as:time1} of comparable timestep sizes there also exists $\rho \geq 1$ such that $k \leq \rho k_1$. The generic constants $C > 0$ may depend on $\rho$.
\end{assumption}

For reasons which will become clear later we first perform $n_0 \in \mathbb N$ implicit Euler steps with $n_0$ only depending on the regularity assumptions. We start by formulating this timestepping scheme: Let $u_k^n \in \sV1$ and $p_k^n \in \sQ0$ solve
\begin{subequations}\label{eq:st:disc:sm}
\begin{gather}
	u_k^n - u_k^{n-1} - k_n \Delta u_k^n + k_n \nabla p_k^n = \int_{I^n} f(t) \dif t\label{eq:st:disc:sm:ie}
	\intertext{for $n = 1, \ldots, n_0$ with $u_k^0 \coloneqq u^0$. Let $\grid_k^{n_0}$ denote the time discretization $\grid_k$ starting at $t_{n_0}$. Then $u_k \in \scG1(\grid_k^{n_0}, \sV1)$ and $p_k \in \sdG0(\grid_k^{n_0}, \sQ0)$ must satisfy}
	\partial_t u_k - \Delta u_k + \nabla p_k = f \quad \text{in $\sdG0(\grid_k^{n_0}, \sHz1(\Omega))'$}
\end{gather}
\end{subequations}
with initial value $u_k(t_{n_0}) = u_k^{n_0}$ from the last implicit Euler step. The results for the Crank-Nicolson scheme from section~\ref{sec:high-reg} carry over to this time-shifted variant. We omit a formulation of this scheme in $\sdG{}$-spaces since we are only interested in pointwise-in-time results. 

In contrast to section~\ref{sec:high-reg} we combine the assumptions required for $\sL2$- and $\sL\infty$-in-time pressure a priori estimates into one single assumption:

\begin{assumption}\label{assumption:st:low}
	We assume that $u^0 \in \sV{r}$ with $r \in \set{0, 1, 2}$. Let $s_0\in\set{1,2}$ encode the regularity of the a priori estimates, in the sense that $s_0 = 1$ will yield $\sL2$-in-time and $s_0 = 2$ will yield $\sL\infty$-in-time pressure estimates. With
	\begin{equation}\label{eq:L}
		L \coloneqq 4 + s_0 - r \in \mathbb N_0
	\end{equation}
	we can formulate the remaining assumptions: We assume that $\Omega$ has a
  $C^{s_0+L+1}$-boundary and the right-hand side has the regularity
	$f \in \sC2(\bar I, \sH{s_0+L-1}(\Omega))$. We write   
  \begin{equation*}
    \Cas{st:low} \coloneqq C\left(\norm{u^0}_{\sV{r}} + \norm{f}_{\sC2\sH{s_0+L-1}}\right)
  \end{equation*}
  where $C > 0$ is independent of the data and, by abuse of notation, may change with each occurrence of $\Cas{st:low}$. If this assumption is made, all generic constants $C > 0$ may depend on $r$ and $s_0$. 
\end{assumption}

Under assumption~\ref{assumption:st:low} for $s\in \mathbb{Z}$ with $s_0 - L \le s \le s_0+L$ we define $\alpha \coloneqq \frac{(2 m + s - r)^+}{2}$ for $m \in \set{0, 1, 2}$. The solution to the Stokes equations then satisfies the bounds
\begin{equation}\label{reg:stokes:sm1}
  \nTS{\tau^\alpha \partial_t^m u}_{\sC0\sV{s}} + \nTS{\tau^\alpha \partial_t^m u}_{\sL2\sV{s+1}}   < \Cas{st:low}. 
\end{equation}
Given $s\geq 2$ it further holds
\begin{equation}\label{reg:stokes:sm2}
  \nTS{\tau^\alpha \partial_t^m p}_{\sC0\sQ{s-1}} + \nTS{\tau^\alpha \partial_t^m p}_{\sL2\sQ{s}} < \Cas{st:low}.
\end{equation}
This follows by similar arguments as in theorems 2.3--2.5 from \cite{HeywoodRannacher1982}. By limiting the required regularity of the initial data to $r\le 2$, we avoid nonlocal compatibility conditions. While this, most critical, assumption is removed, assumption~\ref{assumption:st:low} is not strictly weaker than those from section~\ref{sec:high-reg} due to the stronger assumptions on $f$ and possibly $\Omega$, although some of these can be weakened with a more involved analysis. 

To use these results for the discrete error we need discrete stability estimates with smoothing:

\begin{lemma}[Discrete stability with smoothing]\label{th:st:sm:step}
	Let $s \in \mathbb Z$, $n_0 \in \mathbb N$ and $J \coloneqq (t_{n_0}, T]$. Let $v_k \in \scG1(\grid_k^{n_0}, \sV{s+1})$ and $r_k \in \sdG0(\grid_k^{n_0}, \sV{s-1})$ be such that 
	\begin{equation}\label{eq:sm:disc-stab}
		\partial_t v_k - P \Delta \ak v_k = r_k.
	\end{equation}
	Then we have for $\ell \in \mathbb N$ the stability estimate
	\begin{equation}\label{eq:st:sm:step}
		\begin{aligned}
			\MoveEqLeft[0.5] \nTS{\tau_k^{\frac{\ell}{2}} v_k}_{\sLJ\infty \sV{s}} + \nTS{\tau_k^{\frac{\ell}{2}} \ak v_k}_{\sLJ2 \sV{s+1}} + \nTS{\tau_k^{\frac{\ell}{2}} \partial_t v_k}_{\sLJ2 \sV{s-1}} \\
			&\leq C\bigl(k^{\frac{\ell}{2}} \nS{v_k^{n_0}}_{\sV{s}} + \nTS{\tau_k^{\frac{\ell}{2}} r_k}_{\sLJ2 \sV{s-1}} + \nTS{\tau_k^{\frac{\ell - 1}{2}} \ak v_k}_{\sLJ2 \sV{s}} + k \nTS{\tau_k^{\frac{\ell-1}{2}} \partial_t v_k}_{\sLJ2 \sV{s}} \bigr).
		\end{aligned}
	\end{equation}
	We will refer to \eqref{eq:st:sm:step} as an \emph{estimate of regularity (level) $s$ and smoothing (level) $\ell$}.
\end{lemma}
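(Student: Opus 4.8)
The target estimate \eqref{eq:st:sm:step} is a weighted version of the unweighted stability estimate \eqref{eq:disc-stab:div} from lemma~\ref{th:st:disc-stab}, so the natural strategy is to repeat that proof's energy argument but with the test function carrying the discrete weight $\tau_k^{\ell/2}$. Concretely, I would test the error equation \eqref{eq:sm:disc-stab}, viewed as an element of $\sL2(J, \sV{1-s})'$, with $\phi_k \coloneqq \tau_k^{\ell} (-P\Delta)^s v_k \indicatorfunc_{(t_{n_0}, t_n]}$ for each $n$, which is admissible since $\tau_k^\ell \in \sdG0(\grid_k^{n_0}, \mathbb R_+)$ is piecewise constant and hence $\phi_k \in \sdG0(\grid_k^{n_0}, \sV{s-1})$. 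As in lemma~\ref{th:st:disc-stab}, the diffusion term yields the good term $\nTS{\tau_k^{\ell/2} \ak v_k}_{\sLJ2 \sV{s+1}}^2$ after inserting $\ak$ via the projection property (the weight commutes with $\ak$ since it is constant on each $I^n$), and the right-hand side $\spLL{\tau_k^\ell r_k, (-P\Delta)^s v_k}$ is absorbed by Young's inequality into $\nTS{\tau_k^{\ell/2} r_k}_{\sLJ2 \sV{s-1}}^2$ plus a fraction of the good term.

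\textbf{The main obstacle} is the time-derivative term $\spLL{\partial_t v_k, \tau_k^\ell (-P\Delta)^s v_k}_{(t_{n_0},t_n]}$. In the unweighted case partial integration telescoped cleanly into $\tfrac12(\nS{v_k^n}_{\sV s}^2 - \nS{v_k^{n_0}}_{\sV s}^2)$, but now $v_k$ is continuous across nodes while $\tau_k^\ell$ is discontinuous, so the piecewise partial integration on each $I^j$ produces boundary contributions at the interior nodes of the form $\tfrac12 (\tau_{k,j}^\ell - \tau_{k,j+1}^\ell)\nS{v_k^j}_{\sV s}^2$, i.e.\ a sum over the jumps of the weight. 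These jump terms are exactly the mechanism flagged in the introduction as requiring a ``cascade of estimates.'' I would write $\tau_{k,j+1}^\ell - \tau_{k,j}^\ell = (\tau_{k,j+1}^{\ell} - \tau_{k,j}^{\ell})$ and bound each jump by $C k\, \tau_{k,j}^{\ell-1}$ using $\tau_k\le\tau$, assumption~\ref{as:time1} (or \ref{as:time1+}) for comparability of steps, and the elementary bound on the discrete derivative of $\tau_k^\ell$; summing against $\nS{v_k^j}_{\sV s}^2$ over the interval then reproduces, up to the factor $k$, a term of the shape $\nTS{\tau_k^{(\ell-1)/2} \ak v_k}_{\sLJ2 \sV s}^2$ together with a correction controlled by $k\,\nTS{\tau_k^{(\ell-1)/2}\partial_t v_k}_{\sLJ2\sV s}$ — precisely the two lower-smoothing-level terms appearing on the right of \eqref{eq:st:sm:step}. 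The initial boundary term at $t_{n_0}$ carries the weight $\tau_{k,n_0+1}^\ell \le C k^\ell$ (recall $\tau_k\big|_{I^1}\equiv 0$ and comparability), producing the $k^{\ell/2}\nS{v_k^{n_0}}_{\sV s}$ contribution after taking square roots.

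The final step is bookkeeping: after taking the maximum over $n$ and square roots, I recover the $\sLJ\infty\sV s$ and $\sLJ2\sV{s+1}$ norms on the left. The weighted $\partial_t v_k$ estimate then follows exactly as in lemma~\ref{th:st:disc-stab}, by solving \eqref{eq:sm:disc-stab} for $\partial_t v_k = r_k + P\Delta\ak v_k$, multiplying by $\tau_k^{\ell/2}$, and using $\nTS{\tau_k^{\ell/2} P\Delta \ak v_k}_{\sLJ2\sV{s-1}} \le \nTS{\tau_k^{\ell/2}\ak v_k}_{\sLJ2\sV{s+1}}$ together with the bound just established. I expect no genuine difficulty here beyond keeping the jump sums organized; the whole point of the lemma is that the weighting cannot be absorbed for free but only at the cost of the lower-smoothing-level quantities on the right, which is why the estimate is stated as a recursive (cascading) bound rather than a closed one.
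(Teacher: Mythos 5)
Your proposal is correct and follows essentially the same route as the paper: testing with $\tau_k^\ell(-P\Delta)^s v_k\indicatorfunc_{(t_{n_0},t_n]}$, telescoping the time-derivative term into nodal jumps of the weight, bounding each jump by $Ck_j\restr{\tau_k^{\ell-1}}{I^j}$, and recovering the two lower-smoothing-level terms on the right, with the final $\partial_t v_k$-bound obtained by solving the equation. The only detail you leave implicit is the splitting $\nS{v_k^j}_{\sV{s}}\le\nS{\restr{\ak v_k}{I^j}}_{\sV{s}}+\tfrac k2\nS{\restr{\partial_t v_k}{I^j}}_{\sV{s}}$ that converts the nodal values in the jump sum into the $\ak v_k$- and $\partial_t v_k$-quantities, which is precisely the step the paper identifies as the origin of the cascade.
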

\begin{proof}
	Let $J' \coloneqq (t_{n_0}, t_n]$ for $n = n_0+1, \ldots, N$. We test \eqref{eq:sm:disc-stab} with $\tau_k^\ell (- P \Delta)^s v_k \indicatorfunc_{J'}$ and proceed as in the proof of lemma~\ref{th:st:disc-stab}, yielding
	\begin{equation}\label{eq:st:sm:step:tmp1}
		\spLL{\partial_t v_k, \tau_k^\ell (-P\Delta)^s v_k}_{J'} + \tfrac 1 2 \nTS{\tau_k^{\frac{\ell}{2}} \ak v_k}_{\sLJp2\sV{s+1}}^2 \leq \tfrac 1 2 \nTS{\tau_k^{\frac{\ell}{2}} r_k}_{\sLJp2 \sV{s-1}}^2.
	\end{equation}
	For the first term on the left we use the piecewise constantness of $\tau_k$ to get, with a factor $2$ for convenience 
	\begin{align*}
		\MoveEqLeft2 \spLL{\partial_t v_k, \tau_k^\ell (- P \Delta)^s v_k}_{J'} = \sum_{j = n_0 + 1}^n \int_{I^j} \frac{\dif}{\dif t} \left( \restr{\tau_k^\ell}{I^j} \nS{v_k}_{\sV{s}}^2 \right) \dif t\\
		&= \sum_{j = n_0 + 1}^n \restr{\tau_k^{\ell}}{I^j} \nS{v_k^j}_{\sV{s}}^2 - \restr{\tau_k^\ell}{I^j} \nS{v_k^{j-1}}_{\sV{s}}^2\\
		&= \restr{\tau_k^\ell}{I^n} \nS{v_k^n}_{\sV{s}}^2 - \restr{\tau_k^\ell}{I^{n_0+1}} \nS{v_k^{n_0}}_{\sV{s}}^2  + \sum_{j = n_0 + 1}^{n-1} \left( \restr{\tau_k^\ell}{I^j} - \restr{\tau_k^\ell}{I^{j+1}} \right) \nS{v_k^j}_{\sV{s}}^2.
	\end{align*}
	Moving all but the first term to the right-hand side of \eqref{eq:st:sm:step:tmp1} and using $\restr{\tau_k}{I^{n_0+1}} \leq C k$, we arrive at
	\begin{equation}\label{eq:st:sm:step:tmp2}
		\begin{aligned}
			\MoveEqLeft\restr{\tau_k^\ell}{I^n} \nS{v_k^n}_{\sV{s}}^2 + \nTS{\tau_k^{\frac{\ell}{2}} \ak v_k}_{\sLJp2\sV{s+1}}^2 \\
			&\leq C k^\ell \nS{v_k^{n_0}}_{\sV{s}}^2 + \nTS{\tau_k^{\frac{\ell}{2}} r_k}_{\sLJp2 \sV{s-1}}^2 + \sum_{j = n_0 + 1}^{n-1} \left( \restr{\tau_k^\ell}{I^{j+1}} - \restr{\tau_k^\ell}{I^j} \right) \nS{v_k^j}_{\sV{s}}^2.
		\end{aligned}
	\end{equation}
	We now want to estimate the last term on the right of \eqref{eq:st:sm:step:tmp2}. In the Crank-Nicolson scheme we have no control over $\nS{v_k^j}_{\sV{s}}$ itself, forcing us to use
	\begin{equation}\label{eq:st:sm:step:tmp3}
		\nS{v_k^j}_{\sV{s}} \leq \nS{\restr{\ak v_k}{I^j}}_{\sV{s}} + \tfrac k 2 \nS{\restr{\partial_t v_k}{I^j}}_{\sV{s}}.
	\end{equation}
	For the smoothing function jump, we have by elementary calculations that
	\begin{equation}\label{eq:st:sm:step:tmp4}
		\restr{\tau_k^\ell}{I^{j+1}} - \restr{\tau_k^\ell}{I^j} = \tau^\ell(t_j) - \tau^\ell(t_{j-1}) \leq t_j^\ell - t_{j-1}^\ell \leq \ell k_j t_j^{\ell-1} \leq C k_j \restr{\tau_k^{\ell-1}}{I^j}
	\end{equation}
	for $t_{j-1} \leq 1$, but the finale estimate also holds otherwise. From \eqref{eq:st:sm:step:tmp3} and \eqref{eq:st:sm:step:tmp4} we get
	\begin{align*}
		\sum_{j = n_0 + 1}^{n-1} \left( \restr{\tau_k^\ell}{I^{j+1}} - \restr{\tau_k^\ell}{I^j} \right) \nS{v_k^j}_{\sV{s}}^2 &\leq C \sum_{j = n_0 + 1}^{n-1} k_j \restr{\tau_k^{\ell-1}}{I^j} \left( \nS{\restr{\ak v_k}{I^j}}_{\sV{s}}^2 + k^2 \nS{\restr{\partial_t v_k}{I^j}}_{\sV{s}}^2 \right)\\
		&\leq C \left( \nTS{\tau_k^{\frac{\ell-1}{2}} \ak v_k}_{\sLJp{2} \sV{s}}^2 + k^2 \nTS{\tau_k^{\frac{\ell-1}{2}} \partial_t v_k}_{\sLJp{2} \sV{s}}^2 \right)
	\end{align*}
	and hence we can conclude for \eqref{eq:st:sm:step:tmp2} that
	\begin{align*}
		\restr{\tau_k^\ell}{I^n} \nS{v_k^n}_{\sV{s}}^2 + \nTS{\tau_k^{\frac{\ell}{2}} \ak v_k}_{\sLJp2\sV{s+1}}^2 &\leq C k^\ell \nS{v_k^{n_0}}_{\sV{s}}^2 + \nTS{\tau_k^{\frac{\ell}{2}} r_k}_{\sLJp2 \sV{s-1}}^2 \\
		&\quad+ C \nTS{\tau_k^{\frac{\ell-1}{2}} \ak v_k}_{\sLJp{2} \sV{s}}^2 + C k^2 \nTS{\tau_k^{\frac{\ell-1}{2}} \partial_t v_k}_{\sLJp{2} \sV{s}}^2.
	\end{align*}
	Taking the maximum over $n = n_0 + 1, \ldots, N$ and using 
	\begin{equation*}
		\nTS{\tau_k^{\frac{\ell}{2}} v_k}^2_{\sLJ\infty \sV{s}} \leq C \max_{n = n_0, \ldots, N} \restr{\tau_k^{\ell}}{I^n} \nS{v_k^n}^2_{\sV{s}}
	\end{equation*}
	we can conclude the claimed estimate for $\nTS{\tau_k^{\frac{\ell}{2}} v_k}_{\sLJ\infty \sV{s}}$ and $\nTS{\tau_k^{\frac{\ell}{2}} \ak v_k}_{\sLJ2 \sV{s+1}}$. Since
	\begin{equation*}
		\tau_k^{\frac{\ell}{2}} \partial_t v_k = \tau_k^{\frac{\ell}{2}} r_k + \tau_k^{\frac{\ell}{2}} P \Delta \ak v_k
	\end{equation*}
	the final estimate for $\nTS{\tau_k^{\frac{\ell}{2}} \partial_t v_k}_{\sLJ{2} \sV{s-1}}$ follows from the already established one.
\end{proof}

\begin{remark}\label{rm:sm-cascade}
	We emphasize the occurrence of $\ak v_k$ and $\partial_t v_k$ on the right-hand side of \eqref{eq:st:sm:step}. These terms can be estimated again by \eqref{eq:st:sm:step} with smoothing level $\ell - 1$ and regularity level $s - 1$ (for $\ak v_k$) and $s + 1$ (for $\partial_t v_k$). The unfavorable increase in regularity to estimate $\partial_t v_k$ is balanced by the occurrence of the factor $k$ in \eqref{eq:st:sm:step}. The procedure must be repeated until $\ell = 0$, where \eqref{eq:disc-stab:div} from lemma~\ref{th:st:disc-stab} can be used. The whole process is depicted as a cascade of estimates in figure~\ref{fig:st:sm:cascade}. The origin of the cascade is the lack of an a priori estimate for $\nTS{v_k}_{\sLJp2 \sV{s}}$ in the Crank-Nicolson scheme, circumvented by \eqref{eq:st:sm:step:tmp3}. 
\end{remark}

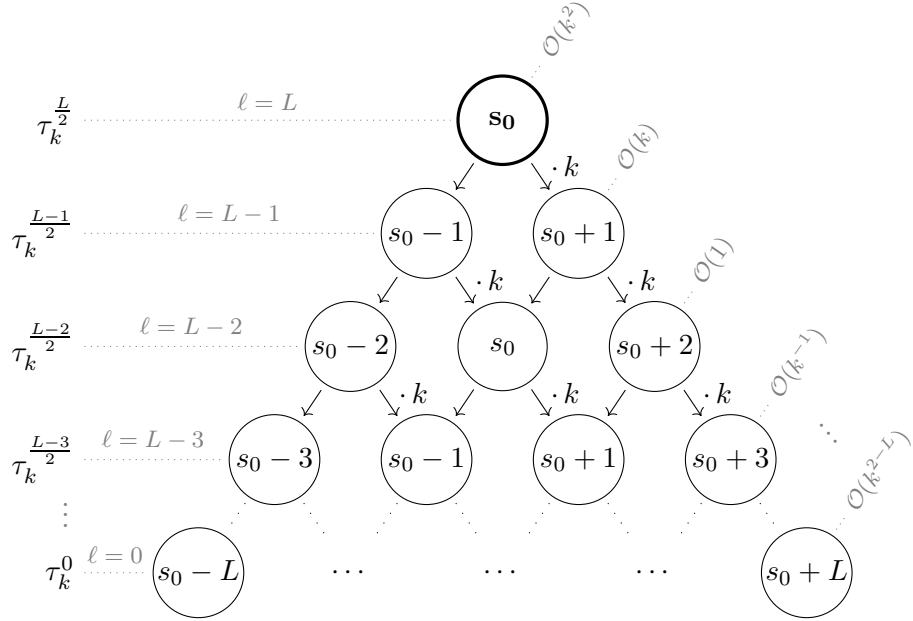
\begin{figure}
  \begin{center}
\pgfdeclarelayer{edgelayer}
\pgfdeclarelayer{nodelayer}
\pgfsetlayers{edgelayer,nodelayer,main}

\tikzstyle{none}=[inner sep=0pt]
\tikzstyle{smoothnode}=[circle,draw=black,align=center,text width=3em,inner sep=0pt,outer sep=0.25em]
\tikzstyle{topsmoothnode}=[smoothnode, very thick]
\tikzstyle{colnode}=[color=gray,rotate=55, anchor=west]
\tikzstyle{rownode}=[anchor=east]
\tikzstyle{conn}=[->]
\tikzstyle{anno}=[dotted,color=gray]
\tikzstyle{leftout}=[loosely dotted]

\begin{tikzpicture}
	\begin{pgfonlayer}{nodelayer}
		\node [style=topsmoothnode] (0) at (0, 0) {$\mathbf{s_0}$};
		\node [style=smoothnode] (1) at (-1, -1.5) {$s_0 - 1$};
		\node [style=smoothnode] (2) at (1, -1.5) {$s_0 + 1$};
		\node [style=smoothnode] (3) at (-2, -3) {$s_0 - 2$};
		\node [style=smoothnode] (4) at (2, -3) {$s_0 + 2$};
		\node [style=smoothnode] (5) at (0, -3) {$s_0$};
		\node [style=smoothnode] (6) at (-3, -4.5) {$s_0 - 3$};
		\node [style=smoothnode] (7) at (-1, -4.5) {$s_0 - 1$};
		\node [style=smoothnode] (8) at (3, -4.5) {$s_0 +3$};
		\node [style=smoothnode] (9) at (1, -4.5) {$s_0 + 1$};
		\node [style=colnode] (10) at (0.5, 0.75) {\footnotesize$\mathcal O(k^2)$};
		\node [style=colnode] (11) at (1.5, -0.75) {\footnotesize$\mathcal O(k)$};
		\node [style=colnode] (12) at (2.5, -2.25) {\footnotesize$\mathcal O(1)$};
		\node [style=colnode] (13) at (3.5, -3.75) {\footnotesize$\mathcal O(k^{-1})$};
		\node [style=rownode] (14) at (-5.5, 0) {$\tau_k^{\frac{L}2}$};
		\node [style=rownode] (15) at (-5.5, -1.5) {$\tau_k^{\frac{L-1}2}$};
		\node [style=rownode] (16) at (-5.5, -3) {$\tau_k^{\frac{L-2}2}$};
		\node [style=rownode] (17) at (-5.5, -4.5) {$\tau_k^{\frac{L-3}2}$};
		\node [style=smoothnode] (18) at (-4, -6) {$s_0 - L$};
		\node [style=smoothnode] (19) at (4, -6) {$s_0 + L$};
		\node [style=colnode] (20) at (4.5, -5.25) {\footnotesize $\mathcal O(k^{2-L})$};
		\node [style=rownode] (21) at (-5.5, -6) {$\tau_k^0$};
		\node [style=none, fill=white, inner sep=1em] (22) at (-2, -6) {$\cdots$};
		\node [style=none, fill=white, inner sep=1em] (23) at (0, -6) {$\cdots$};
		\node [style=none, fill=white, inner sep=1em] (24) at (2, -6) {$\cdots$};
		\node [style=none, color=gray, rotate=40, anchor=west, xshift=1em, yshift=0.5em] (25) at (4, -4.5) {$\vdots$};
		\node [style=none, color=gray] (26) at (-5.75, -5.125) {$\vdots$};
	\end{pgfonlayer}
	\begin{pgfonlayer}{edgelayer}
		\draw [style=conn] (0) to (1);
		\draw [style=conn] (1) to (3);
		\draw [style=conn] (3) to (6);
		\draw [style=conn] (3) to node[right, yshift=0.35em]{$\cdot \,k$} (7);
		\draw [style=conn] (1) to node[right, yshift=0.35em]{$\cdot \,k$} (5);
		\draw [style=conn] (0) to node[right, yshift=0.35em]{$\cdot \,k$} (2);
		\draw [style=conn] (2) to node[right, yshift=0.35em]{$\cdot \,k$} (4);
		\draw [style=conn] (2) to (5);
		\draw [style=conn] (5) to node[right, yshift=0.35em]{$\cdot \,k$} (9);
		\draw [style=conn] (4) to node[right, yshift=0.35em]{$\cdot \,k$} (8);
		\draw [style=conn] (4) to (9);
		\draw [style=conn] (5) to (7);
		\draw [style=anno] (0) to (10);
		\draw [style=anno] (2) to (11);
		\draw [style=anno] (4) to (12);
		\draw [style=anno] (8) to (13);
		\draw [style=anno] (14) to node[above]{\footnotesize $\ell=L$} (0);
		\draw [style=anno] (15) to node[above]{\footnotesize $\ell=L-1$} (1);
		\draw [style=anno] (16) to node[above]{\footnotesize $\ell=L-2$} (3);
		\draw [style=anno] (17)  to node[above]{\footnotesize $\ell=L-3$}(6);
		\draw [style=anno]  (21) to node[above]{\footnotesize $\ell=0$} (18);
		\draw [style=anno] (19) to (20);
		\draw [style=leftout] (6) to (18);
		\draw [style=leftout] (6) to (22);
		\draw [style=leftout] (7) to (22);
		\draw [style=leftout] (7) to (23);
		\draw [style=leftout] (9) to (23);
		\draw [style=leftout] (9) to (24);
		\draw [style=leftout] (8) to (24);
		\draw [style=leftout] (8) to (19);
	\end{pgfonlayer}
\end{tikzpicture}

  \end{center}
  \caption{Illustration of the cascade of estimates required to reduce \eqref{eq:st:sm:step} from lemma~\ref{th:st:sm:step} with regularity level $s_0$ and smoothing level $L$ (bold node) to an estimate with only the data on the right-hand side, see remark~\ref{rm:sm-cascade}. Each node  represents an application of estimate~\eqref{eq:st:sm:step} if $\ell > 0$ or \eqref{eq:disc-stab:div} if $\ell = 0$. From top-to-bottom the smoothing level decreases while the regularity decreases (estimates for $\ak v_k$) or increases with factor $k$ (estimates for $\partial_t v_k$).}
	\label{fig:st:sm:cascade}
\end{figure}

Let us now turn to the necessity of the implicit Euler steps. If we apply lemma~\ref{th:st:sm:step} to the discrete error identity \eqref{eq:st:error-u} we must control $\norm{u(t_{n_0}) - u_k^{n_0}}_{\sV{s}}$ in \eqref{eq:st:sm:step}. If $s > r$ this is impossible in the Crank-Nicolson scheme since the regularity of the initial value limits the regularity of the discrete solution due to the occurrence of $\Delta u_k^{n-1}$ in \eqref{eq:st:disc-classical}. The implicit Euler scheme has, in contrast, a smoothing property: From
\begin{equation}\label{eq:st:sm:reg}
	- k_n P \Delta u_k^n = \int_{I^n} P f(t) \dif t - u_k^n + u_k^{n-1}
\end{equation}
we can deduce that $u_k^n \in \sV{r+2n}$ for sufficiently regular $f$ and $\Omega$. The number of implicit Euler steps is consequently chosen such that $u_k^{n_0} \in \sV{s}$ for the largest $s$ in the cascade from figure~\ref{fig:st:sm:cascade}, which is the bottom-right node with $s = s_0 + L$. Therefore $n_0$ is such that $r + 2 n_0 = s_0 + L = 4 + 2 s_0 - r$, i.e.\
\begin{equation}\label{eq:n0}
	n_0 \coloneqq 2 + s_0 - r.
\end{equation}

The following lemma quantifies the control over $\norm{u(t_{n_0}) - u_k^{n_0}}_{\sV{s}}$ for the regularity levels $s$ which will appear in the smoothing cascade, cf.\ figure~\ref{fig:st:sm:cascade}. For sufficiently small $s$ we have the usual second order convergence for the local error in the implicit Euler scheme. For larger $s$ we loose convergence and ultimately stability, the loss of stability of $\norm{u_k^n}_{\sV{s}}$ for large $s$ can already be seen through the factor $k_n$ in \eqref{eq:st:sm:reg}.

\begin{lemma}\label{th:st:sm:euler}
	Let assumption~\ref{assumption:st:low} hold. Then for $s = s_0 - L, \ldots, s_0 + L$ we have
	\[ \nS{u(t_{n_0}) - u_k^{n_0}}_{\sV{s}} \leq \Cas{st:low} k^{\frac{r-s}{2}}. \]
\end{lemma}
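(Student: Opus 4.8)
The plan is to eliminate the pressure from the implicit Euler step~\eqref{eq:st:disc:sm:ie}, reduce the scheme to a pure evolution driven by the Stokes operator, and then unroll the resulting error recursion and control each contribution by the smoothing of the discrete resolvent together with the weighted regularity estimates~\eqref{reg:stokes:sm1}. Concretely, I would apply the Helmholtz projection $P$ to \eqref{eq:st:disc:sm:ie}; since $P\nabla p_k^n = 0$, writing $A \coloneqq -P\Delta$ for the Stokes operator the step becomes $(\Id + k_n A)u_k^n = u_k^{n-1} + \int_{I^n} P f(t)\dif t$ for $n = 1,\dots,n_0$. Projecting the continuous equation in the same way and subtracting gives the error recursion $(\Id + k_n A)e^n = e^{n-1} + \eta^n$ with $e^n \coloneqq u(t_n) - u_k^n$, $e^0 = 0$, and local consistency error $\eta^n \coloneqq \int_{I^n} A\bigl(u(t_n) - u(t)\bigr)\dif t$. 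Because $n_0 = 2 + s_0 - r$ is a fixed constant by~\eqref{eq:n0}, unrolling produces the finite sum
\[ e^{n_0} = \sum_{j=1}^{n_0} \Bigl( \prod_{i=j}^{n_0} (\Id + k_i A)^{-1} \Bigr)\eta^j, \]
so it suffices to bound each of the $n_0$ terms by $\Cas{st:low}\,k^{(r-s)/2}$.

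The two ingredients are a smoothing estimate for the resolvent product and a regularity estimate for $\eta^j$. For the first, since $A$ is positive and selfadjoint and the initial $n_0$ steps are mutually comparable and all $\asymp k$ by assumptions~\ref{as:time1+} and~\ref{as:time1}, spectral calculus yields $\norm{A^\beta \prod_{i=j}^{n_0}(\Id + k_i A)^{-1}} \le C k^{-\beta}$ for $0 \le \beta \le m_j$, where $m_j \coloneqq n_0 - j + 1$ is the number of resolvent factors acting on $\eta^j$; the $j$-th term thus carries a smoothing budget of $2m_j$ in the regularity index. For the second, the intervals $I^j$ with $j \le n_0$ lie in the region $\tau = t$, so \eqref{reg:stokes:sm1} controls $\partial_t u$ near $t=0$ only up to a time weight $t^{-\alpha}$ with $\alpha = \frac{(2 + \tilde s - r)^+}{2}$ at regularity level $\tilde s$. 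Writing $\eta^j = A\,w^j$ with $w^j \coloneqq \int_{I^j}\bigl(u(t_j) - u(t)\bigr)\dif t$ and distributing $A^{(s+2)/2}$ between the resolvent product and an intermediate level $\tilde s$ assigned to $w^j$, the estimate $\norm{A^{s/2} \bigl(\prod_i(\Id+k_iA)^{-1}\bigr)\eta^j} \le C k^{-(s+2-\tilde s)/2}\norm{w^j}_{\sV{\tilde s}}$ holds for $s + 2 - 2m_j \le \tilde s \le s + 2$, and reduces the goal to verifying $\norm{w^j}_{\sV{\tilde s}} \le \Cas{st:low}\,k^{(r-\tilde s)/2 + 1}$. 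Optimizing $\tilde s$ against the singularity exponent $\alpha$ — integrating the time weight over $I^j$ and using $u \in \sC0(\bar I,\sV{r})$ unweighted for the difference $u(t_j)-u(t)$ — produces the net power $k^{(r-s)/2}$; for the lowest indices this is just the standard second-order local error, so that at $s = s_0 - L = r - 4$ the bound $k^2$ is recovered.

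I expect the main obstacle to be the first step $j = 1$ at the highest regularity $s = s_0 + L = r + 2n_0$. There the smoothing budget is saturated, forcing $\tilde s = r + 2$, while the solution is only in $\sV{r}$ at $t = 0$ and the singularity exponent of $\partial_t u$ at that level attains the critical value $\alpha = 2$; consequently the crude pointwise-in-time bound on $w^1$ reduces to $\int_0^{t_1} t^{-1}\dif t$ and diverges logarithmically, and even the sharper $\sL2$-in-time part of~\eqref{reg:stokes:sm1} combined with Cauchy--Schwarz remains exactly log-critical. Closing this corner requires exploiting the discrete summation (maximal-regularity) structure of the resolvent cascade — estimating the whole sum in a discrete $\ell^2_k$-in-time norm against the weighted $\sL2\sV{s+1}$ regularity rather than bounding each time slice pointwise — which is genuinely sharper than the continuous-time bound. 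This critical case is the quantitative heart of the lemma, whereas every other pair $(j,s)$ possesses strict slack and follows from the budget-versus-singularity balancing described above.
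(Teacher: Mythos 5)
Your reduction to the error recursion $(\Id + k_nA)e^n = e^{n-1} + \eta^n$ and the resolvent smoothing bound are sound, and for $s < r$ your argument essentially coincides with the paper's (which tests the same error identity with $(-P\Delta)^s v_k^n$ and uses $\nS{\partial_t u(t)}_{\sV{s+2}} \le \Cas{st:low}\,t^{-(4+s-r)/2}$, subcritical there). The genuine gap is the corner you yourself flag, $j=1$ at $s = s_0+L$, and the fix you propose does not exist: the logarithmic divergence sits inside the single integral $\int_{I^1}\nS{u(t_1)-u(t)}_{\sV{r+2}}\dif t$, not in the sum over the $n_0$ (a fixed, bounded number of) steps, so no discrete $\ell^2$-in-time or maximal-regularity rearrangement of the outer sum can remove it; integrating by parts to $\int_0^{t_1} t\,\partial_t u\,\dif t$ or invoking the $\sL2$-in-time part of \eqref{reg:stokes:sm1} via Cauchy--Schwarz both land back on $\int_0^{t_1}t^{-1}\dif t$. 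As written, the proposal therefore does not prove the lemma for the top regularity levels.

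The missing observation is that for $s \ge r$ the asserted bound $k^{(r-s)/2}$ has a non-positive exponent, so no cancellation between $u(t_{n_0})$ and $u_k^{n_0}$ is needed at all: the paper simply applies the triangle inequality and bounds the two terms separately, $\nS{u(t_{n_0})}_{\sV{s}} \le \Cas{st:low}\,(t_{n_0})^{(r-s)/2} \le \Cas{st:low}\,k^{(r-s)/2}$ from the continuous smoothing \eqref{reg:stokes:sm1} together with $t_{n_0}\ge Ck$, and $\nS{u_k^{n_0}}_{\sV{s}} \le \Cas{st:low}\,k^{(r-s)/2}$ by an induction on the Euler steps that alternates the energy test with $(-P\Delta)^\vars u_k^n$ and the elliptic regularity of the discrete Stokes resolvent $-k_nP\Delta u_k^n = \int_{I^n}Pf\dif t - u_k^n + u_k^{n-1}$, gaining two regularity levels per step at the cost of $k^{-1}$. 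Your error-recursion framework is only used in the paper where it converges, namely $s<r$; restructure your proof along this case split and the critical corner disappears.
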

\begin{proof}
	We first consider the case $s \geq r$, which yields non-positive powers of $k$. We have by assumption~\ref{as:time1+} that $t_{n_0} \geq C k$. The smoothing estimate of $u$ from assumption~\ref{assumption:st:low} hence implies
	\begin{equation}\label{eq:th:st:sm:euler:tmp1}
		\nS{u(t_{n_0})}_{\sV{s}} = (\tau(t_{n_0}))^{\frac{r-s}{2}} (\tau(t_{n_0}))^{\frac{s-r}{2}} \nS{u(t_{n_0})}_{\sV{s}} \leq \Cas{st:low} \, (t_{n_0})^{\frac{r-s}{2}} \leq \Cas{st:low} k^{\frac{r-s}{2}}.
	\end{equation}
	To estimate $u_k^{n_0}$ we test, for $n = 1, \ldots, n_0$,
	\begin{equation}\label{eq:th:st:sm:euler:tmp2}
		u_k^n - k_n P \Delta u_k^n = \int_{I^n} Pf(t) \dif t + u_k^{n-1}
	\end{equation}
	with $(-P \Delta)^\vars u_k^n$ where $r \leq \vars \leq s - 1$ is chosen later. Then
	\begin{equation*}
		\tfrac 1 2 \norm{u_k^n}^2_{\sV\vars} + k_n \norm{u_k^n}_{\sV{\vars+1}}^2 \leq \int_{I^n} \norm{Pf(t)}_{\sV\vars}^2 \dif t + \norm{u_k^{n-1}}_{\sV{\vars}}^2
	\end{equation*}
	and hence, again using assumptions~\ref{assumption:st:low} and~\ref{as:time1+},
	\begin{equation}\label{eq:th:st:sm:euler:tmp3}
		\norm{u_k^n}_{\sV{\vars}} \leq \Cas{st:low} + C \norm{u_k^{n-1}}_{\sV{\vars}}, \quad	\norm{u_k^n}_{\sV{\vars+1}} \leq \Cas{st:low} k^{-\frac 12} + C k^{-\frac 12} \norm{u_k^{n-1}}_{\sV{\vars}}.
	\end{equation}
	By regularity of the Stokes operator we also conclude from \eqref{eq:th:st:sm:euler:tmp2} that for $r \leq \vars \leq s - 2$:
	\begin{equation}\label{eq:th:st:sm:euler:tmp4}
		\begin{aligned}
			\norm{u_k^n}_{\sV{\vars+2}} &\leq C k^{-1} \left(\int_{I^n} \norm{Pf}_{\sV{\vars}} \dif t + \norm{u^{n-1}_k}_{\sV{\vars}} + \norm{u_k^n}_{\sV{\vars}} \right) \\
			&\leq \Cas{st:low} k^{-1} + C k^{-1} \norm{u_k^{n-1}}_{\sV{\vars}}.
		\end{aligned}
	\end{equation}
	Let now $s = r + 2m$ with $m \geq 0$. Then $s \leq s_0 + L$ implies $m \leq n_0$. By induction over $m = 0, \ldots, n_0$, application of \eqref{eq:th:st:sm:euler:tmp4} yields
	\begin{align*}
		\norm{u_k^m}_{\sV{s}} &= \norm{u_k^m}_{\sV{r+2m}} \leq \Cas{st:low} k^{-1} + C k^{-1} \norm{u_k^{m-1}}_{\sV{r+2(m-1)}} \leq \cdots \\
		&\leq \Cas{st:low} k^{-m} + C k^{-m} \norm{u_k^0}_{\sV{r}} \leq \Cas{st:low} k^{-m}
	\end{align*}
	with constants depending on $n_0$. For $m < n_0$ the estimate \eqref{eq:th:st:sm:euler:tmp3} implies
	\begin{equation*}
		\norm{u_k^{n_0}}_{\sV{s}} \leq \Cas{st:low} + C \norm{u_k^{n_0-1}}_{\sV{s}} \leq \ldots \leq \Cas{st:low} + C \norm{u_k^m}_{\sV{s}} \leq \Cas{st:low} k^{-m}
	\end{equation*}
	which yields for $s = r + 2m$ that
	\begin{equation}\label{eq:th:st:sm:euler:tmp5}
		\norm{u_k^{n_0}}_{\sV{s}} \leq \Cas{st:low} k^{-m} = \Cas{st:low} k^{\frac{r-s}{2}}.
	\end{equation}
	For $s = r + 2m - 1$ we have $m \leq n_0$ and use the second estimate in \eqref{eq:th:st:sm:euler:tmp3} once to get
	\begin{equation*}
		\norm{u_k^m}_{\sV{s}} = \norm{u_k^m}_{\sV{r + 2 (m-1) + 1}} \leq \Cas{st:low} k^{-\frac 1 2} + C k^{-\frac 12} \norm{u_k^{m-1}}_{\sV{r + 2(m-1)}}.
	\end{equation*}
	Applying the already established estimate for $u_k^{m-1}$ and using the stability in $\sV{s}$ from \eqref{eq:th:st:sm:euler:tmp3} if $m < n_0$ we arrive for $s = r + 2 m - 1$ at
	\begin{equation}\label{eq:th:st:sm:euler:tmp6}
		\norm{u_k^{n_0}}_{\sV{s}} \leq \Cas{st:low} k^{-(m-1) - \frac12} = \Cas{st:low} k^{\frac{r-s}{2}}.
	\end{equation}
	Combining \eqref{eq:th:st:sm:euler:tmp5} and \eqref{eq:th:st:sm:euler:tmp6} yields $\norm{u_k^{n_0}}_{\sV{s}} \leq \Cas{st:low} k^{\frac{r-s}{2}}$ for any $s = r, \ldots, s_0 + L$. Together with the estimate \eqref{eq:th:st:sm:euler:tmp1} for $u$ this concludes the proof for $s \geq r$.

	For $s < r$ we define $v_k^n \coloneqq u(t_n) - u_k^n$ and derive from \eqref{eq:st:disc:sm:ie} the error identity
	\[ v_k^n - v_k^{n-1} - k_n P \Delta v_k^n = P \int_{I^n} \Delta u(t) - \Delta  u^n \dif t. \]
	Testing this with $(- P \Delta)^s v_k^n$, yields for $n = 1, \ldots, n_0$ that
	\begin{equation}\label{eq:th:st:sm:euler:tmp7}
		\begin{aligned}
			\nS{v_k^n}^2_{\sV{s}} + k_n \nS{v_k^n}^2_{\sV{s+1}} &\leq C \nS{v_k^{n-1}}^2_{\sV{s}} + C \left( \int_{I^n} \nS{u(t) - u^n}_{\sV{s+2}} \dif t \right)^2\\
			&\leq C \nS{v_k^{n-1}}^2_{\sV{s}} + C \left( \int_{I^n} \int_t^{t_n} \nS{\partial_t u(\tilde t)}_{\sV{s+2}} \dif \tilde t \dif t \right)^2
		\end{aligned}
	\end{equation}
	By assumption~\ref{assumption:st:low} we have $\nS{\partial_t u(t)}^2_{\sV{s+2}} \leq \Cas{st:low} t^{- (4 + s- r)}$, thus
	\[ \left( \int_{I^n} \int_t^{t_n} \nS{\partial_t u(\tilde t)}_{\sV{s+2}} \dif \tilde t \dif t \right)^2 \leq \Cas{st:low} k^{r - s} \]
	and by induction of \eqref{eq:th:st:sm:euler:tmp7} over $n = 1, \ldots, n_0$ the claim for $s < r$ follows.
\end{proof}

For simplicity we always assume in the following that $t_{n_0} \leq 1$ such that $\tau(t_n) = t_n$ for all $n = 0, \ldots, n_0$. 

\begin{theorem}\label{th:st:sm:error}
	Let assumptions~\ref{assumption:st:low} hold. If $s_0 = 1$ the Crank-Nicolson time discretization with $n_0 = 3 - r$ initial implicit Euler steps \eqref{eq:st:disc:sm} of the Stokes equations \eqref{eq:st} satisfies on $J \coloneqq (t_{n_0}, T]$ the a priori error estimate
	\begin{equation}\label{eq:st:sm:error-l2}
		\nTS{\tau_k^{\frac{5-r}{2}} (u - u_k)}_{\sLJ\infty \sV1} + \nTS{\tau_k^{\frac{5-r}{2}} \ak (u - u_k)}_{\sLJ2 \sV2} + \nTS{\tau_k^{\frac{5-r}{2}} (\ak p - p_k)}_{\sLJ2 \sQ1} \leq \Cas{st:low} k^2.
	\end{equation}
	If $s_0 = 2$ we require $n_0 = 4 - r$ implicit Euler steps and there holds 
	\begin{equation}\label{eq:st:sm:error-linf}
		\nTS{\tau_k^{\frac{6-r}{2}} (u - u_k)}_{\sLJ\infty \sV2} + \nTS{\tau_k^{\frac{6-r}{2}} \ak (u - u_k)}_{\sLJ2 \sV3} + \nTS{\tau_k^{\frac{6-r}{2}} (\mk p - p_k)}_{\sLJ\infty \sQ1} \leq \Cas{st:low} k^2.
	\end{equation}
\end{theorem}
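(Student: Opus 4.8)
The plan is to run the smoothing cascade of remark~\ref{rm:sm-cascade} on the velocity error identity~\eqref{eq:st:error-u}, and then read off both pressure estimates from the pressure identity~\eqref{eq:st:error-p}. Throughout set $v_k \coloneqq u_k - \ik u$, $r_k \coloneqq -P\Delta\ak(u - \ik u)$ and $L \coloneqq 4 + s_0 - r$, noting that the weight exponent in both claimed estimates is exactly $\tau_k^{L/2}$. Since~\eqref{eq:st:error-u} carries over verbatim to the time-shifted scheme, the pair $(v_k, r_k)$ satisfies the hypothesis~\eqref{eq:sm:disc-stab} of lemma~\ref{th:st:sm:step} on $J = (t_{n_0}, T]$.

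For integers $s_0 - L \le s \le s_0 + L$ and $0 \le \ell \le L$ I would introduce the cascade quantity
\[
	E(s,\ell) \coloneqq \nTS{\tau_k^{\ell/2} v_k}_{\sLJ\infty \sV s} + \nTS{\tau_k^{\ell/2}\ak v_k}_{\sLJ2 \sV{s+1}} + \nTS{\tau_k^{\ell/2}\partial_t v_k}_{\sLJ2 \sV{s-1}}
\]
and prove by downward induction on $\ell$ that $E(s,\ell) \le \Cas{st:low}\, k^{(\ell + r - s)/2}$. For $\ell \ge 1$, lemma~\ref{th:st:sm:step} bounds $E(s,\ell)$ by a boundary term $k^{\ell/2}\nS{v_k^{n_0}}_{\sV s}$, a data term $\nTS{\tau_k^{\ell/2} r_k}_{\sLJ2\sV{s-1}}$, and the two cascade terms $E(s-1,\ell-1)$ and $k\,E(s+1,\ell-1)$ corresponding to the down-left and down-right edges in figure~\ref{fig:st:sm:cascade}; the base case $\ell = 0$ is closed by the plain stability estimate~\eqref{eq:disc-stab:div} of lemma~\ref{th:st:disc-stab}. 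The boundary term is controlled by lemma~\ref{th:st:sm:euler}, giving $k^{\ell/2}\Cas{st:low}k^{(r-s)/2} = \Cas{st:low}k^{(\ell+r-s)/2}$. For the data term I would rewrite $\nTS{r_k}_{\sV{s-1}} = \nTS{\ak(u-\ik u)}_{\sV{s+1}}$, apply the weighted interpolation estimate of lemma~\ref{th:nodal-int-properties} (remark~\ref{rm:approx-weights}) to extract $k^2$, and then absorb any shortfall of the smoothing weight against the regularity bound~\eqref{reg:stokes:sm1} using $\tau_k \ge ck$ on $J$, which holds by assumption~\ref{as:time1+}; writing $s = s_0 - L + 2b$, with $b$ the number of down-right steps taken to reach the node, a short computation shows the data contribution is again $\Cas{st:low}k^{(\ell+r-s)/2}$. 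Down-left edges preserve the exponent $(\ell+r-s)/2$, while down-right edges raise the target exponent by one but are damped by the explicit factor $k$, so the inductive hypothesis reproduces the bound at every node.

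Evaluating the induction at the root $(s_0, L)$ gives $E(s_0,L) \le \Cas{st:low}k^2$, i.e.\ the weighted $\sV{s_0}$- and $\sV{s_0+1}$-estimates for $v_k$; adding the weighted interpolation error $\nTS{\tau_k^{L/2}(u-\ik u)}_{\sLJ\infty\sV{s_0}}$ and its $\ak$-analogue from lemma~\ref{th:nodal-int-properties} yields the stated velocity bounds for $u - u_k$. Inserting these into~\eqref{eq:st:error-p} and using boundedness of $P - \Id$ together with Poincar\'e on $\sQ1$, the quantity $\nTS{\tau_k^{L/2}(\ak p - p_k)}$ is controlled by $\nTS{\tau_k^{L/2}\ak(u-u_k)}_{\sLJ2\sV2}$ (for $s_0=1$) and by $\nTS{\tau_k^{L/2}(u-u_k)}_{\sLJ\infty\sV2}$ (for $s_0=2$), both already shown to be $\Cas{st:low}k^2$. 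For $s_0 = 2$ it remains to pass from $\ak p$ to $\mk p$ via the weighted version of lemma~\ref{th:midpoint-quadratic-estimate}, bounding $\nTS{\tau_k^{L/2}(\ak p - \mk p)}_{\sLJ\infty\sQ1} \le Ck^2\nTS{\tau_k^{L/2}\partial_{tt}p}_{\sLJ\infty\sQ1}$ and estimating the last factor by~\eqref{reg:stokes:sm2} with $m = s = 2$, whose weight $\alpha = (6-r)/2 = L/2$ matches the weight exactly.

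The main obstacle is the uniform bookkeeping across the $\mathcal{O}(L^2)$ nodes of the cascade: one must verify that the loss of a power of $k$ at each descent in smoothing level is compensated either by the factor $k$ on the $\partial_t v_k$ branch or by the gain from lemma~\ref{th:st:sm:euler} on the $\ak v_k$ branch, and that every regularity level encountered stays inside the range $[s_0 - L, s_0 + L]$ where~\eqref{reg:stokes:sm1} and lemma~\ref{th:st:sm:euler} are available --- which is precisely what forces the choice $n_0 = 2 + s_0 - r$ of initial Euler steps. The second delicate point is the conversion of insufficient smoothing weights into powers of $k$ through $\tau_k \ge ck$ on $J$: this is what allows the boundary data term at the high-regularity nodes, where lemma~\ref{th:st:sm:euler} only provides negative powers of $k$, to recombine correctly with the damping factors and still close at order $k^2$ at the root.
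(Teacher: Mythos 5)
Your proposal is correct and follows essentially the same route as the paper's proof: the same induction through the cascade nodes with target exponent $k^{(\ell+r-s)/2}$ (the paper's $k^{2-i}$, equivalent via $\ell+r-s=4-2i$), the same treatment of the boundary term via lemma~\ref{th:st:sm:euler} and of the data term via the weighted interpolation estimate combined with $\tau_k\ge ck$ on $J$, the base case $\ell=0$ closed by lemma~\ref{th:st:disc-stab}, and the same final assembly of the velocity and pressure bounds from~\eqref{eq:st:error-p} and lemma~\ref{th:midpoint-quadratic-estimate}. The only slip is the node parametrization, which should read $s = s_0 - L + \ell + 2b$ rather than $s_0 - L + 2b$; this does not affect the argument, since the stated target exponent and the edge-compensation claims are correct.
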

\begin{proof}
	We note that $n_0$ satisfies \eqref{eq:n0} and $\tau_k^{\frac L 2}$, with $L = 4 + s_0 - r$ from \eqref{eq:L}, matches the smoothing in \eqref{eq:st:sm:error-l2} and \eqref{eq:st:sm:error-linf}. We first prove, with $v_k \coloneqq \ik u - u_k$, that 
	\begin{equation}\label{eq:th:st:sm:error:tmp1}
		\nTS{\tau_k^{\frac L 2} v_k}_{\sLJ\infty \sV{s_0}} + \nTS{\tau_k^{\frac L 2} \ak v_k}_{\sLJ2 \sV{s_0+1}} + \nTS{\tau_k^{\frac L 2} \partial_t v_k}_{\sLJ2 \sV{s_0-1}} \leq \Cas{st:low} k^2.
	\end{equation}
	Specifically, we prove by induction over the levels $\ell = 0, \ldots, L$ of the cascade that
	\begin{equation}\label{eq:th:st:sm:error:tmp2}
		\nTS{\tau_k^{\frac \ell 2} v_k}_{\sLJ\infty \sV{s}} + \nTS{\tau_k^{\frac \ell 2} \ak v_k}_{\sLJ2 \sV{s+1}} + \nTS{\tau_k^{\frac \ell 2} \partial_t v_k}_{\sLJ2 \sV{s-1}} \leq \Cas{st:low} k^{2 - i}
	\end{equation}
	for $s = s_0 - L + \ell + 2i$ with $i = 0, \ldots, L - \ell$. Then \eqref{eq:th:st:sm:error:tmp1} corresponds to \eqref{eq:th:st:sm:error:tmp2} with $\ell = L$ (and $i = 0$). We remark that the order $k^{2-i}$ may be negative but is sufficient in \eqref{eq:th:st:sm:error:tmp2} since $i$ implicitly counts the number of $\partial_t v_k$-estimates in the cascade, each of which yields a factor $k$, see figure~\ref{fig:st:sm:cascade}. Before we proceed to the induction we prove that
	\begin{equation}\label{eq:th:st:sm:error:prep}
		k^{\frac{\ell}{2}} \nS{v_k(t_{n_0})}_{\sV{s}} \leq \Cas{st:low} k^{2- i}
		\quad\text{and}\quad
		\norm{\tau_k^{\frac \ell 2} (u - \ik u)}_{\sLJ2 \sV{s+1}} \leq \Cas{st:low} k^{2-i}
	\end{equation}
	for all nodes in the cascade. We first note that by definition of $s$ there holds
	\begin{equation}\label{eq:th:st:sm:error:vars}
		\ell + r - s = 4 - 2 i.
	\end{equation}
	The first estimate in \eqref{eq:th:st:sm:error:prep} follows then from lemma~\ref{th:st:sm:euler}:
	\[ k^{\frac \ell 2} \nS{v_k(t_{n_0})}_{\sV{s}} \leq \Cas{st:low} k^{\frac{\ell + r-s}{2}} = \Cas{st:low} k^{2- i}. \]
	To prove the second inequality in \eqref{eq:th:st:sm:error:prep} we use lemma~\ref{th:nodal-int-properties} to estimate
	\begin{equation}\label{eq:th:st:sm:error:tmpA}
		\norm{\tau_k^{\frac \ell 2} (u - \ik u)}_{\sLJ2 \sV{s+1}} \leq C k^2 \norm{\tau_k^{\frac \ell 2} \partial_{tt} u}_{\sLJ2 \sV{s+1}}.
	\end{equation}
	Using $\tau_k \leq \tau$, \eqref{eq:th:st:sm:error:vars} and $\tau(t_{n_0}) = t_{n_0} \geq C k$ we get
	\begin{align*} 
		\norm{\tau_k^{\frac \ell 2} \partial_{tt} u}_{\sLJ2 \sV{s+1}} \leq \norm{\tau^{\frac \ell 2} \partial_{tt} u}_{\sLJ2 \sV{s+1}} \leq \max_{t \in J} (\tau(t))^{-i} \norm{\tau^{\frac{4+s-r}{2}} \partial_{tt} u}_{\sLJ2 \sV{s+1}} \leq \Cas{st:low} k^{-i}
	\end{align*}
	which combined with \eqref{eq:th:st:sm:error:tmpA} concludes the proof of \eqref{eq:th:st:sm:error:prep}. The case $\ell = 0$ in \eqref{eq:th:st:sm:error:tmp2} now follows from the non-smoothing estimate \eqref{eq:disc-stab:div} from lemma~\ref{th:st:disc-stab} and \eqref{eq:th:st:sm:error:prep}:
	\begin{align*}
		\nTS{v_k}_{\sLJ\infty \sV{s}} + \nTS{\ak v_k}_{\sLJ2 \sV{s+1}} + \nTS{\partial_t v_k}_{\sLJ2 \sV{s-1}} &\leq C ( \nS{v_k^{n_0}}_{\sV{s}} + \nTS{u - \ik u}_{\sLJ2 \sV{s+1}} ) \\
		&\leq \Cas{st:low} k^{2 - i}.
	\end{align*}
	For $\ell = 1, \ldots, L$ and $i = 0, \ldots, L - \ell$ we use the smoothing estimate \eqref{eq:st:sm:step} from lemma~\ref{th:st:sm:step} and \eqref{eq:th:st:sm:error:prep}:
	\begin{align*}
		\MoveEqLeft \nTS{\tau_k^{\frac \ell 2} v_k}_{\sLJ\infty \sV{s}} + \nTS{\tau_k^{\frac \ell 2} \ak v_k}_{\sLJ2 \sV{s+1}} + \nTS{\tau_k^{\frac \ell 2} \partial_t v_k}_{\sLJ2 \sV{s-1}} \\
		&\leq C\Big(k^{\frac \ell 2} \nS{v_k^{n_0}}_{\sV{s}}+ \nTS{\tau_k^{\frac \ell 2} (u - \ik u)}_{\sLJ2 \sV{s+1}} \\
		&\quad\qquad + \nTS{\tau_k^{\frac{\ell-1}{2}} \ak v_k}_{\sLJ2 \sV{s}} + k \nTS{\tau_k^{\frac{\ell-1}{2}} \partial_t v_k}_{\sLJ2 \sV{s}} \Big)\\
		&\leq \Cas{st:low} k^{2-i} + C \Big( \nTS{\tau_k^{\frac{\ell-1}{2}} \ak v_k}_{\sLJ2 \sV{s}} + k \nTS{\tau_k^{\frac{\ell-1}{2}} \partial_t v_k}_{\sLJ2 \sV{s}} \Big).
	\end{align*}
	For the remaining terms on the right-hand side we have by induction hypothesis
	\[ \nTS{\tau_k^{\frac{\ell-1}{2}} \ak v_k}_{\sLJ2 \sV{s}} \leq \Cas{st:low} k^{2 - i} \quad\text{and}\quad \nTS{\tau_k^{\frac{\ell-1}{2}} \partial_t v_k}_{\sLJ2 \sV{s}} \leq \Cas{st:low} k^{2 - (i+1)} \]
	which concludes our induction and hence the proof of \eqref{eq:th:st:sm:error:tmp1}.
	
	Splitting $u - u_k = (u - \ik u) + (\ik u - u_k)$ we arrive at the velocity error estimates in \eqref{eq:st:sm:error-l2} and \eqref{eq:st:sm:error-linf} using \eqref{eq:th:st:sm:error:tmp1} and, for the norms of $\ik u - u_k$, the interpolation estimate for $\ik$ and assumption~\ref{assumption:st:low} which imply
	\begin{equation*}
		\norm{\tau_k^{\frac{L}{2}} (u - \ik u)}_{\sLJ\infty \sV{s_0}} \leq C k^2 \norm{\tau_k^{\frac{L}{2}} \partial_{tt} u}_{\sLJ\infty \sV{s_0}} \leq C k^2 \norm{\tau^{\frac{L}{2}} \partial_{tt} u}_{\sLJ\infty \sV{s_0}} \leq \Cas{st:low} k^2.
	\end{equation*}
	and similarly $\norm{\tau_k^{\frac L 2} \ak (u - \ik u)}_{\sLJ2\sV{s_0+1}} \leq \Cas{st:low} k^2$.

	For the pressure we use the error identity \eqref{eq:st:error-p}, which implies
	\begin{equation*}
		\tau_k^{\frac L 2} \nabla(p_k - \ak p) = (P - \Id) \tau_k^{\frac L 2} \ak \Delta(u - u_k)
	\end{equation*}
	and hence for $s_0 = 1$, by the already established velocity estimate from \eqref{eq:st:sm:error-l2},
	\begin{equation*}
		\nTS{\tau_k^{\frac{5-r}{2}} (\ak p - p_k)}_{\sLJ2 \sQ1} \leq C \nTS{\tau_k^{\frac{5-r}{2}} \ak (u - u_k)}_{\sLJ2 \sV2} \leq \Cas{st:low} k^2
	\end{equation*}
	and this is the pressure estimate in \eqref{eq:st:sm:error-l2}. Similarly for $s_0 = 2$ we have
	\begin{equation*}
		\nTS{\tau_k^{\frac{6-r}{2}} (\ak p - p_k)}_{\sLJ\infty \sQ1} \leq C \nTS{\tau_k^{\frac{6-r}{2}} (u - u_k)}_{\sLJ\infty \sV2} \leq \Cas{st:low} k^2
	\end{equation*}
	which implies the pressure estimate in \eqref{eq:st:sm:error-linf} using lemma~\ref{th:midpoint-quadratic-estimate}.
\end{proof}

\begin{remark}
	The techniques used in theorem~\ref{th:st:sm:error} can easily be generalized to other regularity levels $s_0 \in \mathbb N_0$ with appropriately modified assumptions on the data. The case $s_0 = 0$ and $r = 0$ yields $\norm{\tau_k^2 (u - u_k)}_{\sLJ\infty \sL2} \leq C k^2$ after $n_0 = 2$ implicit Euler steps, in agreement with the results in \cite{rannacher1984} for the fully discrete problem.
\end{remark}

\subsection{Navier-Stokes Equations}\label{sec:low-reg:ns}

We now consider the Navier-Stokes equations with smoothing and restrict
ourselves to the situation that $u^0 \in V^2$, which corresponds to $r = 2$ in our linear theory. 
\begin{assumption}\label{assumption:ns:low}
  Let assumption~\ref{assumption:st:low} hold for $r = 2$ and $s_0\in\set{1,2}$, in particular $L \coloneqq 2+s_0$. In addition, let  $\|\nabla u\|_{\sL\infty\sL2}<\infty$, which is satisfied for $d=2$  or for sufficiently small data if $d=3$. We write
  \[
  \Cas{ns:low}:=
  C(\nTS{u^0}_{V^r}, \nTS{f}_{C^2\sH{s_0+L-1}}, \norm{\nabla u}_{\sL\infty\sL2})
  \]
	where $C(\cdots) > 0$ is some function independent of the data and, by abuse of notation, may change with each occurrence of $\Cas{ns:low}$.
\end{assumption}
Under these assumptions for $s\in\mathbb{Z}$ with $-2\le s\le s_0+L$ we define $\alpha \coloneqq \frac{(2 m + s - 2)^+}{2}$ for $m \in \set{0, 1, 2}$. The solution to the Navier-Stokes equations then satisfies the bounds
\begin{equation}\label{reg:ns:sm1}
  \nTS{\tau^\alpha \partial_t^m u}_{\sC0\sV{s}} + \nTS{\tau^\alpha \partial_t^m u}_{\sL2\sV{s+1}} \leq \Cas{ns:low}. 
\end{equation}
Given $s\geq 2$ it further holds
\begin{equation}\label{reg:ns:sm2}
  \nTS{\tau^\alpha \partial_t^m p}_{\sC0\sQ{s-1}} + \nTS{\tau^\alpha \partial_t^m p}_{\sL2\sQ{s}} \leq \Cas{ns:low}.
\end{equation}
Again, this follows by similar arguments as in theorems 2.3--2.5 from \cite{HeywoodRannacher1982}. As in the linear case the required regularity is essentially reduced since we do not ask for nonlocal compatibility conditions to hold. 

In the following we apply the linear smoothing techniques to $v_k \coloneqq u_k - \ik u$ using the error identity~\eqref{eq:ns:error-u}. To bound the nonlinearity further error estimates are needed, resulting in a double-cascade which is sketched for $s_0 = 1$ in figure~\ref{fig:ns:sm:cascade}. The derivation of the necessary estimates for all nodes of this double-cascade will be the goal for much of this section. 

\begin{figure}\label{fig:ns:sm:cascade}
  \begin{center}
\pgfdeclarelayer{edgelayer}
\pgfdeclarelayer{nodelayer}
\pgfsetlayers{edgelayer,nodelayer,main}

\tikzstyle{none}=[inner sep=0pt]
\tikzstyle{smoothnode}=[circle,draw=black,align=center,text width=1.8em,inner sep=0pt,outer sep=0.25em]
\tikzstyle{topsmoothnode}=[smoothnode, very thick]
\tikzstyle{smoothnodesec}=[smoothnode, dashed, color=gray]
\tikzstyle{colnode}=[color=gray,rotate=55, anchor=west]
\tikzstyle{rownode}=[anchor=east]
\tikzstyle{conn}=[->]
\tikzstyle{connsec}=[dashed,conn,color=gray]
\tikzstyle{anno}=[dotted,color=gray]
\tikzstyle{leftout}=[loosely dotted]

\begin{tikzpicture}
	\begin{pgfonlayer}{nodelayer}
		\node [style=topsmoothnode] (0) at (0, 0) {$\mathbf 1$};
		\node [style=smoothnode] (1) at (-1, -1.5) {$0$};
		\node [style=smoothnode] (2) at (1, -1.5) {$2$};
		\node [style=smoothnode] (3) at (-2, -3) {$-1$};
		\node [style=smoothnode] (4) at (2, -3) {$3$};
		\node [style=smoothnode] (5) at (0, -3) {$1$};
		\node [style=smoothnode] (6) at (-3, -4.5) {$-2$};
		\node [style=smoothnode] (7) at (-1, -4.5) {$0$};
		\node [style=smoothnode] (8) at (3, -4.5) {$4$};
		\node [style=smoothnode] (9) at (1, -4.5) {$2$};
		\node [style=smoothnodesec] (10) at (0, -1.5) {$1$};
		\node [style=smoothnodesec] (11) at (-1, -3) {$0$};
		\node [style=smoothnodesec] (12) at (1, -3) {$2$};
		\node [style=smoothnodesec] (13) at (-2, -4.5) {$-1$};
		\node [style=smoothnodesec] (14) at (2, -4.5) {$3$};
		\node [style=smoothnodesec] (15) at (0, -4.5) {$1$};
		\node [style=rownode] (16) at (-4.0, 0) {$\tau_k^{\frac32}$};
		\node [style=rownode] (17) at (-4.0, -1.5) {$\tau_k^{1}$};
		\node [style=rownode] (18) at (-4.0, -3) {$\tau_k^{\frac12}$};
		\node [style=rownode] (19) at (-4.0, -4.5) {$\tau_k^0$};
		\node [style=colnode] (20) at (0.5, 0.75) {\footnotesize$\mathcal O(k^2)$};
		\node [style=colnode] (21) at (1.0, 0.00) {\footnotesize$\mathcal O(k^{1.5})$};
		\node [style=colnode] (22) at (1.5, -0.75) {\footnotesize$\mathcal O(k)$};
		\node [style=colnode] (23) at (2.0, -1.50) {\footnotesize$\mathcal O(k^{0.5})$};
		\node [style=colnode] (24) at (2.5, -2.25) {\footnotesize$\mathcal O(1)$};
		\node [style=colnode] (25) at (3.0, -3.00) {\footnotesize$\mathcal O(k^{-0.5})$};
		\node [style=colnode] (26) at (3.5, -3.75) {\footnotesize$\mathcal O(k^{-1})$};
	\end{pgfonlayer}
	\begin{pgfonlayer}{edgelayer}
		\draw [style=connsec] (10) to (11);
		\draw [style=connsec] (10) to (12);
		\draw [style=connsec] (11) to (13);
		\draw [style=connsec] (11) to (15);
		\draw [style=connsec] (12) to (15);
		\draw [style=connsec] (12) to (14);
		\draw [style=conn] (0) to (1);
		\draw [style=conn] (1) to (3);
		\draw [style=conn] (3) to (6);
		\draw [style=conn] (3) to (7);
		\draw [style=conn] (1) to (5);
		\draw [style=conn] (0) to (2);
		\draw [style=conn] (2) to (4);
		\draw [style=conn] (2) to (5);
		\draw [style=conn] (5) to (9);
		\draw [style=conn] (4) to (8);
		\draw [style=conn] (4) to (9);
		\draw [style=conn] (5) to (7);
		\draw [style=anno] (16) to (0);
		\draw [style=anno] (17) to (1);
		\draw [style=anno] (18) to (3);
		\draw [style=anno] (19)  to (6);
		\draw [style=anno] (0) to (20);
		\draw [style=anno] (10) to (21);
		\draw [style=anno] (2) to (22);
		\draw [style=anno] (12) to (23);
		\draw [style=anno] (4) to (24);
		\draw [style=anno] (14) to (25);
		\draw [style=anno] (8) to (26);
	\end{pgfonlayer}
\end{tikzpicture}
  \end{center}
  \caption{Double-cascade of estimates required to establish the $\sL2$-in-time pressure error estimate for the Navier-Stokes equations with smoothing in theorem~\ref{th:ns:sm:error}, corresponding to $s_0 = 1$ in assumption~\ref{assumption:ns:low} (bold node). The arrows only indicate dependencies due to the smoothing, c.f.\ figure~\ref{fig:st:sm:cascade}, those due to the nonlinearity are not shown but make the double cascade necessary. For readability the labels $\cdot k$ have been omitted.}
\end{figure}
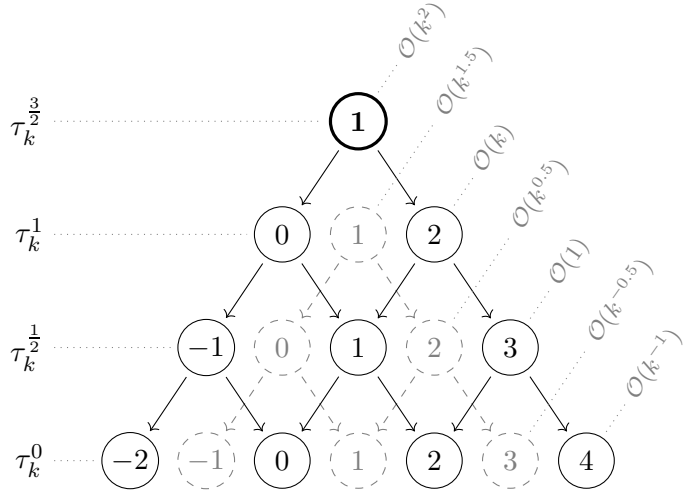

As in the linear case, we start with $n_0 = 2 + s_0 - r = s_0$ implicit Euler steps. The scheme has the form: Find $u_k^n \in \sV1$ and $q_k^n \in \sQ0$ such that
\begin{subequations}\label{eq:ns:disc:sm}
\begin{gather}
	u_k^n - u_k^{n-1} - k_n \Delta u_k^n + k_n (u_k^n \cdot \nabla u_k^n) - k_n \nabla p_k^n = \int_{I^n} f(t) \dif t\label{eq:ns:disc:sm:ie}
	\intertext{for $n = 1, \ldots, n_0$ with $u_k^0 \coloneqq u^0$. Afterwards we use the Crank-Nicolson scheme from section~\ref{sec:high-reg}, i.e.\ $u_k \in \scG1(\grid_k^{n_0}, \sV1)$ and $p_k \in \sdG0(\grid_k^{n_0}, \sQ0)$ must satisfy}
	\partial_t u_k - \Delta u_k + (\ak u_k) \cdot \nabla (\ak u_k) + \nabla p_k = f \quad \text{in $\sdG0(\grid_k^{n_0}, \sHz1(\Omega))'$}
\end{gather}
\end{subequations}
with initial value $u_k(t_{n_0}) = u_k^{n_0}$ from the last implicit Euler step.

\begin{lemma}\label{th:ns:sm:nonlin}
	Let assumption~\ref{assumption:ns:low} hold. With $J \coloneqq (t_{n_0}, T]$, we have for each $s = -2, \ldots, s_0 + L$:
	\begin{equation}\label{eq:ns:sm:nonlin}
		\nTS{\tau_k^{\frac{s+2}2} \ak \oCdiff(u, \ak \ik u)}_{\sLJ2 \sH{s-1}} \leq \Cas{ns:low} k^2.
	\end{equation}
\end{lemma}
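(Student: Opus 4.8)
The plan is to mirror the high-regularity proof of lemma~\ref{th:ns:nonlin-approx-error}, but to carry the discrete weight $\tau_k^{\frac{s+2}2}$ through every step and to reconcile it, only at the very end, with the weighted regularity bounds \eqref{reg:ns:sm1}. First I would insert the representation \eqref{eq:ak-nonlin}, writing $\ak\oCdiff(u,\ak\ik u)$ as the sum of the purely quadratic term $\ak\left((u-\ak\ik u)\cdot\nabla(u-\ak\ik u)\right)$ and the two mixed terms $\ak(u-\ik u)\cdot\nabla\ak\ik u$ and $\ak\ik u\cdot\nabla\ak(u-\ik u)$. Since $\tau_k$ is piecewise constant it commutes with $\ak$, so on the quadratic term I may pull the weight inside and drop the outer projection by its $\sL2$-boundedness, while the mixed terms are already products of pointwise-in-time factors.

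Next I would split into the two regimes dictated by the available nonlinearity estimates of lemma~\ref{th:ns:nonlin-props}. For $s\le 1$ (target index $s-1\le 0$) I apply \eqref{eq:nl:1}, \eqref{eq:nl:2}, \eqref{eq:nl:3}; for $s\ge 2$ I use the high-order estimate \eqref{eq:nl:sge1} for the top derivative together with a low-order bound for the remaining part of the $\sH{s-1}$-norm. In every case each term becomes a product of two velocity factors, and the factor $k^2$ is won on the difference factors: in the quadratic term both factors equal $u-\ak\ik u$, each contributing one power of $k$ through the midpoint estimate lemma~\ref{th:nodal-midpoint-estimates}; in the mixed terms the single difference factor $\ak(u-\ik u)$ contributes $k^2$ directly through lemma~\ref{th:nodal-int-properties} with $m=2$ and boundedness of $\ak$, while its companion $\ak\ik u$ is controlled by $u$ in $\sLJ\infty\sV2$ \emph{without} any weight, precisely because the restriction $u^0\in\sV2$ forces $\alpha=0$ for $m=0$ at regularity $\sV2$ in \eqref{reg:ns:sm1}. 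All weighted versions of these approximation estimates are legitimate by remark~\ref{rm:approx-weights}.

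The heart of the argument, and the step I expect to be the main obstacle, is the weight bookkeeping: after replacing $\tau_k\le\tau$ I must split the budget $\tau^{\frac{s+2}2}=\tau^{\gamma_1}\tau^{\gamma_2}$ across the two factors of each product so that each factor, placed in $\sLJ\infty$ or $\sLJ2$, meets the threshold $\alpha=\frac{(2m+s'-2)^+}{2}$ demanded by \eqref{reg:ns:sm1} (using $\tau\le1$ to discard any surplus). The two ends of the range are delicate. For the negative indices $s\in\set{-2,-1}$ the budget $\frac{s+2}2$ is too small to give both factors positive weight, so I would use \eqref{eq:nl:1} to route one factor into the weight-free space $\sLd2=\sV0$ and load the entire weight onto the $\sV{s+2}$-factor; a short check against \eqref{reg:ns:sm1} (for $s=-2$ passing through the embedding $\sV{-1}\hookrightarrow\sV{-2}$) then closes the estimate. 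At the top index $s=s_0+L$ I must instead confirm that every regularity level occurring in \eqref{eq:nl:sge1} remains within the admissible window $-2\le s'\le s_0+L$ of \eqref{reg:ns:sm1}; since in each summand the spatial orders of the two factors sum to $s+2$ and one power of $k$ is spent per difference, the weight actually needed is at most $\frac{s+1}2$, leaving a half-power of margin. Assembling the three contributions and taking the $\sLJ2$-norm in time then yields the bound with constant $\Cas{ns:low}$.
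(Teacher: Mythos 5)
Your proposal is correct and follows essentially the same route as the paper's proof: insert the representation \eqref{eq:ak-nonlin}, treat $s\le 1$ via \eqref{eq:nl:sl1} and $s\ge 2$ via \eqref{eq:nl:sge1}, win one power of $k$ per difference factor through lemmata~\ref{th:nodal-int-properties} and~\ref{th:nodal-midpoint-estimates}, and distribute the weight $\tau_k^{\frac{s+2}{2}}$ across the two factors so that each meets the threshold $\alpha$ of \eqref{reg:ns:sm1}. Your bookkeeping at the endpoints (no weight on the $\sL2$-factor for $s\in\set{-2,-1}$, total weight $\frac{s+1}{2}$ with a half-power of slack for large $s$) matches the paper's choice of exponents.
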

\begin{proof}
	Combining the approximation estimates from lemmata~\ref{th:nodal-int-properties} and~\ref{th:nodal-midpoint-estimates} and the regularity results for our continuous solution, we have for $-2 \leq s \leq s_0 + L$:
	\begin{equation}\label{eq:th:ns:sm:nonlin:tmp}
		\left\{ \quad \begin{alignedat}{3}
			\nTS{\tau_k^{\frac{(s-2)^+}{2}} u}_{\sLJ\infty \sV{s}} &\leq \Cas{ns:low}, &&\;& \nTS{\tau_k^{\frac{(s)^+}{2}} (u - \ak \ik u)}_{\sLJ2\sV{s+1}} &\leq \Cas{ns:low} k, \\
			\nTS{\tau_k^{\frac{(s)^+}{2}} (u - \ak \ik u)}_{\sLJ\infty\sV{s}} &\leq \Cas{ns:low} k, &&&\nTS{\tau_k^{\frac{s+2}{2}} (u - \ik u)}_{\sLJ2\sV{s+1}} &\leq \Cas{ns:low} k^2.
		\end{alignedat} \right.
	\end{equation}
	Combining the estimates for the nonlinearity from \eqref{eq:nl:sl1} with \eqref{eq:th:ns:sm:nonlin:tmp} yields \eqref{eq:ns:sm:nonlin} for $s \leq 1$:
	\begin{align*}
		\MoveEqLeft\nTS{\tau_k^{\frac{s+2}{2}} \oCdiff(u, \ak \ik u)}_{\sLJ2\sH{s-1}} \\
		&\leq C \Big( \nTS{u - \ak \ik u}_{\sLJ\infty \sL2} \nTS{\tau_k^{\frac{(s+1)^+}{2}} (u - \ak \ik u)}_{\sLJ2\sV{s+2}} \\
		&\qquad\quad+ \nTS{\ak \ik u}_{\sLJ\infty \sV2} \nTS{\tau_k^{\frac{s+2}{2}} (u - \ik u)}_{\sLJ2 \sV{s+1}} \Big)\\
		&\leq C k^2 \Big( \nTS{\partial_t u}_{\sLJ\infty \sL2} \nTS{\tau_k^{\frac{(s+1)^+}{2}} \partial_t u}_{\sLJ2\sV{s+2}} + \nTS{u}_{\sCJ0 \sV2} \nTS{\tau_k^{\frac{s+2}{2}} \partial_{tt} u}_{\sLJ2 \sV{s+1}} \Big)\\
		&\leq \Cas{ns:low} k^2.
	\end{align*}
	For $s \geq 2$ we use \eqref{eq:nl:sge1} and \eqref{eq:th:ns:sm:nonlin:tmp} to get
	\begin{align*}
		\MoveEqLeft\nTS{\tau_k^{\frac{s+2}{2}} \ak \nabla^{s-1} \oCdiff(u, \ak \ik u)}_J\\
		&\leq C \sum_{i=1}^{s-1} \Big(\nTS{\tau_k^{\frac{i+1}{2}} (u - \ak \ik u)}_{\sLJ\infty \sV{i+1}} \nTS{\tau_k^{\frac{s-i}{2}} (u - \ak \ik u)}_{\sLJ2 \sV{s-i+1}} \\
		&\quad\qquad\qquad + \nTS{\tau_k^{\frac{i+2}{2}} (u - \ik u)}_{\sLJ2\sV{i+1}} \nTS{\tau_k^{\frac{s-i-1}{2}} u}_{\sLJ\infty \sV{s-i+1}} \Big)\\
		&\leq \Cas{ns:low} k^2
	\end{align*} 
	noting in particular that the smoothing on the left-hand side is sufficient for the smoothing used on the right. Combining these estimates and \eqref{eq:ns:sm:nonlin} with $s = 1$ for the $\sL2$-term, we conclude that \eqref{eq:ns:sm:nonlin} is true for the full $\sH{s-1}$-norm.
\end{proof}

\begin{lemma}\label{th:ns:sm:euler}
	Let assumption~\ref{assumption:ns:low} hold. Then there exists $C_0 > 0$ such that for each $s = -2, \ldots, s_0 + L$ we have 
	\begin{equation}\label{eq:ns:sm:ie}
		\nS{u(t_{n_0}) - u_k^{n_0}}_{\sV{s}} \leq \Cas{ns:low} k^{\frac{2 - s}{2}}
	\end{equation}
	if the stepsize condition $k < C_0 \nTS{\nabla u}_{\sL\infty\sL2}^{-1} \nTS{\Delta u}_{\sL\infty\sL2}^{-1}$ is satisfied.
\end{lemma}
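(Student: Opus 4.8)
The plan is to mirror the proof of lemma~\ref{th:st:sm:euler} for the linear case, splitting the range $s = -2, \ldots, s_0 + L$ at the threshold $s = r = 2$ and treating the nonlinearity as a perturbation controlled by lemma~\ref{th:ns:nonlin-props} and the stepsize smallness, exactly as the nonlinear terms were absorbed in lemmata~\ref{th:ns:approx-error-h1} and~\ref{th:ns:approx-error-l2}. As in the linear case I would eliminate the pressure by applying the Helmholtz projection $P$ to the implicit Euler scheme \eqref{eq:ns:disc:sm:ie}, so that the working equation reads $u_k^n - u_k^{n-1} - k_n P\Delta u_k^n + k_n P(u_k^n \cdot \nabla u_k^n) = \int_{I^n} Pf\dif t$, and split $u(t_{n_0}) - u_k^{n_0}$ by the triangle inequality where convenient.

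For $s \geq r = 2$ (non-positive powers of $k$) I would first bound $\nS{u(t_{n_0})}_{\sV{s}} \leq \Cas{ns:low} k^{(2-s)/2}$ directly from the smoothing regularity \eqref{reg:ns:sm1} together with $t_{n_0} \geq Ck$, just as in the linear estimate. The new work is the bound on $\nS{u_k^{n_0}}_{\sV{s}}$. Testing the projected scheme with $(-P\Delta)^\sigma u_k^n$, the case $\sigma = 0$ retains the antisymmetry $\spL{u_k^n\cdot\nabla u_k^n, u_k^n} = 0$, giving unconditional $\sL2$- and $\sV1$-stability as in \eqref{eq:th:st:sm:euler:tmp3}. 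For the regularity boost I would use elliptic regularity of the Stokes operator on $k_n(-P\Delta)u_k^n = \int_{I^n}Pf\dif t - u_k^n + u_k^{n-1} - k_n P(u_k^n\cdot\nabla u_k^n)$, obtaining the analogue of \eqref{eq:th:st:sm:euler:tmp4} with the extra term $C\nS{u_k^n\cdot\nabla u_k^n}_{\sV\sigma}$, crucially \emph{not} divided by $k_n$. By \eqref{eq:nl:sge1} this nonlinear term involves only norms up to $\sV{\sigma+1}$, one level below the target $\sV{\sigma+2}$, so the inductively established $k$-powers render it subdominant and it is absorbed for $k$ below the stepsize threshold. I would then run the same induction over $m$ with $s = r+2m$, and the intermediate odd levels $s = r + 2m - 1$, as in the linear proof, now interleaving odd and even regularity levels within each step so that the nonlinearity always sees a previously-controlled norm.

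For $s < r = 2$ (positive powers of $k$, i.e.\ genuine convergence) I would set $v_k^n := u(t_n) - u_k^n$ and derive the error identity by subtracting the projected scheme from the $I^n$-integrated continuous equation; compared with the linear identity behind \eqref{eq:th:st:sm:euler:tmp7} it carries the extra nonlinear defect $\int_{I^n}P(u\cdot\nabla u)\dif t - k_n P(u_k^n\cdot\nabla u_k^n)$. I would split this into a temporal quadrature part $\int_{I^n}P\bigl(u\cdot\nabla u - u(t_n)\cdot\nabla u(t_n)\bigr)\dif t$, bounded through $\partial_t(u\cdot\nabla u)$ and the smoothing bounds \eqref{reg:ns:sm1} at the correct order, and a propagated bilinear part $k_nP\bigl(v_k^n\cdot\nabla u(t_n) + u_k^n\cdot\nabla v_k^n\bigr)$. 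Testing with $(-P\Delta)^s v_k^n$ and using lemma~\ref{th:ns:nonlin-props}, the propagated part is controlled by $\nS{v_k^n}_{\sV{s+1}}$ against the diffusion term together with the factors $\nTS{\nabla u}_{\sL\infty\sL2}$, $\nTS{\Delta u}_{\sL\infty\sL2}$ and the $\sV2$-bound on $u_k^n$ obtained in the $s\geq r$ step; the stepsize smallness then absorbs it, just as in lemma~\ref{th:ns:approx-error-h1}. Induction over the finitely many steps $n = 1, \ldots, n_0$ closes the estimate.

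The main obstacle is organising the high-regularity bootstrap so that the nonlinearity never demands a norm at the level being produced. The delicate point is that boosting to $\sV{\sigma+2}$ at step $n$ requires, through \eqref{eq:nl:sge1}, the odd level $\sV{\sigma+1}$ at the same step $n$; one must therefore interleave the odd and even regularity levels within each Euler step and check in each case that the accumulated $k$-powers keep the nonlinear contribution strictly below the dominant $k^{-1}$ linear term. For the small-$s$ regime the difficulty is instead the closure of the absorption argument, for which the stepsize condition $k < C_0\nTS{\nabla u}_{\sL\infty\sL2}^{-1}\nTS{\Delta u}_{\sL\infty\sL2}^{-1}$ is exactly what is needed, mirroring the role it played in lemmata~\ref{th:ns:approx-error-h1} and~\ref{th:ns:approx-error-l2}.
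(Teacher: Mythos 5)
There is a genuine gap in your plan for the regime $s\ge 2$, and it sits exactly at the base of your bootstrap: the uniform bound $\nS{u_k^n}_{\sV2}\le \Cas{ns:low}$ (and the corresponding $\sV1$ bound). In the linear proof this comes from \eqref{eq:th:st:sm:euler:tmp3} with $\vars=r=2$, i.e.\ from testing the scheme with $(-P\Delta)^2 u_k^n$, which is unproblematic for Stokes. For Navier--Stokes that test destroys the antisymmetry, which is why you retreat to $\vars=0$; but then the only route you offer to $\sV2$ is elliptic regularity of the Stokes operator applied to $k_n(-P\Delta)u_k^n=\dots$, i.e.\ the analogue of \eqref{eq:th:st:sm:euler:tmp4}, and that carries the factor $k_n^{-1}$ in front of the $\sL2$-level data. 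This yields only $\nS{u_k^n}_{\sV2}\le \Cas{ns:low}k^{-1}$ (in fact worse once the nonlinear term $\nS{u_k^n\cdot\nabla u_k^n}_{\sL2}\le C\nS{u_k^n}_{\sV1}^{3/2}\nS{u_k^n}_{\sV2}^{1/2}$ is absorbed with the $\sV1$ bound $\nS{u_k^n}_{\sV1}\le Ck^{-1/2}$ that $\vars=0$ testing provides). That is too weak for the claim \eqref{eq:ns:sm:ie} at $s=2$, which demands an $O(1)$ bound on $u(t_{n_0})-u_k^{n_0}$ in $\sV2$, and it also poisons the entire high-regularity induction, since every subsequent application of the elliptic-regularity step inherits the spurious extra power of $k^{-1}$.

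The paper resolves this by reversing your order of dependencies: the cases $s\le 2$ are done \emph{first}, entirely through the error identity for $v_k^n=u(t_n)-u_k^n$, tested with $(-P\Delta)^s v_k^n$ for $s\le 1$. The key point is that the dissipation term $k_n\nS{v_k^n}_{\sV{s+1}}^2$ on the left-hand side delivers the bound one regularity level above the testing level at the cost of $k^{-1/2}$, so the $s=1$ test gives $\nS{v_k^n}_{\sV2}\le\Cas{ns:low}$ and hence $\nS{u_k^n}_{\sV2}\le\nS{u^n}_{\sV2}+\Cas{ns:low}\le\Cas{ns:low}$; the nonlinearity in that computation needs only the cancellation at $s=0$ and the already established lower-level error bounds, never an a priori $\sV2$ bound on $u_k^n$. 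Only then does the paper run the high-regularity bootstrap \eqref{eq:sm:initial:pf} (which otherwise matches your interleaved odd/even scheme), seeded by these $s=1,2$ results. Your proposal has the dependency the other way around --- the $s<2$ analysis is to use ``the $\sV2$-bound on $u_k^n$ obtained in the $s\ge r$ step'' --- and as written the $s\ge r$ step cannot produce that bound. The rest of your outline (triangle inequality plus continuous smoothing for $s>2$, the bilinear splitting of the defect and the cancellation/absorption under the stepsize condition for $s<2$, the structure of the high-order induction) is in line with the paper once this base case is repaired.
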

\begin{proof}
	We first prove \eqref{eq:ns:sm:ie} for $s \leq 2$. With $v_k^n \coloneqq u^n - u_k^n$ we get from \eqref{eq:ns:disc:sm:ie} by elementary calculations the error identity 
	\begin{align*}
		v_k^n - v_k^{n-1} - k_n P \Delta v_k^n &= P \int_{I^n} \Delta (u(t) - u^n) - u(t) \cdot \nabla u(t) +  u_k^n \cdot \nabla u_k^n \dif t\\
		&= P \int_{I^n} \Delta (u(t) - u^n) - \oCdiff(u(t), u^n) + \oCdiff(u_k^n, u^n) \dif t.
	\end{align*}
	Testing this identity with $(-P \Delta)^s v_k^n$ for $s \leq 1$ yields the stability estimate
	\begin{equation}\label{eq:th:ns:sm:euler:stab}
		\tfrac 1 2 \nS{v_k^n}^2_{\sV{s}} + k_n \nS{v_k^n}^2_{\sV{s+1}} \leq \tfrac 1 2 \nS{v_k^{n-1}}^2_{\sV{s}} + (r_{k,1}^n - r_{k,2}^n + r_{k,3}^n, (-P\Delta)^s v_k^n)
	\end{equation}
	where 
	\begin{equation*}
		r_{k,1}^n \coloneqq \int_{I^n} P\Delta (u(t) - u^n) \dif t, \; r_{k,2}^n \coloneqq \int_{I^n} \oCdiff(u(t), u^n) \dif t, \; r_{k,3}^n \coloneqq \int_{I^n} \oCdiff(u_k^n, u^n) \dif t.
	\end{equation*}
	To estimate the $r_{k,1}^n$-term we use arguments as in the linear case, see \eqref{eq:th:st:sm:euler:tmp7} from lemma~\ref{th:st:sm:euler}. For $\delta  > 0$ there thus holds
	\begin{align*}
		\spL{r_{k,1}^n, (- P \Delta)^s v_k^n} &\leq C(\delta) \left( \int_{I^n} \nS{u(t) - u^n}_{\sV{s+2}} \dif t \right)^2 + \delta \nS{v_k^n}_{\sV{s}}^2 \\
		&\leq \Cas{ns:low}(\delta) k^{2-s} + \delta \nS{v_k^n}_{\sV{s}}^2.
	\end{align*}
	Using this estimate in \eqref{eq:th:ns:sm:euler:stab} and moving the $\delta$-term to the left-hand side, we get
	\begin{equation}\label{eq:th:ns:sm:euler:stab2}
		\begin{aligned}
			\tfrac 14 \nS{v_k^n}^2_{\sV{s}} + k_n \nS{v_k^n}^2_{\sV{s+1}} \leq \tfrac 1 2 \nS{v_k^{n-1}}^2_{\sV{s}} + \Cas{ns:low} k^{2-s} + (r_{k,2}^n + r_{k,3}^n, (-P\Delta)^s v_k^n).
		\end{aligned}
	\end{equation}
	To estimate the $r_{k,2}^n$-term in \eqref{eq:th:ns:sm:euler:stab2} we use two different splittings of the scalar product:
	\begin{equation}\label{eq:th:ns:sm:euler:tmp1}
		\spL{r_{k,2}^n, (-P\Delta)^s v_k^n} \leq \begin{cases} C(\delta) \nS{r_{k,2}}_{\sV{s}}^2 + \delta \nS{v_k^n}_{\sV{s}}^2 & \text{if $s = -2, 0$,}\\
		C(\delta) k^{-1} \nS{r_{k,2}^n}_{\sV{s-1}}^2 + \delta k_n \nS{v_k^n}_{\sV{s+1}}^2 &\text{if $ s= -1, 1$.}
		\end{cases}
	\end{equation}
	Note that $\nS{r_{k, 2}^n}_{\sV{s}}$ now only appears with $s = 0$ and $s = -2$. With $v(t) \coloneqq u(t) - u^n$, \eqref{eq:nl:2} and \eqref{eq:nl:3} we get for $s = 0$:
	\begin{align*}
		\int_{I^n} \nS{\oCdiff(u(t), u^n)}_{\sL2} \dif t \leq C \int_{I^n} \nS{v(t)}_{\sV1} \left( \nS{v(t)}_{\sV2} + \nS{u^n}_{\sV2} \right) \dif t \leq \Cas{ns:low} k
	\end{align*}
	since $\norm{v}_{\sC0 \sV2} \leq 2 \norm{u}_{\sC0 \sV2} \leq \Cas{ns:low}$. For $s = -2$ we have 
	\begin{align*}
		\int_{I^n} \nS{\oCdiff(u(t), u^n)}_{\sH{-2}} \dif t \leq C \int_{I^n} \nS{v}_{\sV{-1}} \left( \nS{v}_{\sV2} + \nS{u^n}_{\sV2} \right) \dif t \leq \Cas{ns:low} k^2,
	\end{align*}
	since $\nS{v(t)}_{\sV{-1}} \leq \int_{t}^{t_n} \nS{\partial_t v(\tilde t)}_{\sV{-1}} \dif \tilde t \leq \Cas{ns:low} k$. Combining these estimates with \eqref{eq:th:ns:sm:euler:tmp1} we get for \eqref{eq:th:ns:sm:euler:stab}, after choosing a suitable $\delta$, that
	\begin{equation}\label{eq:th:ns:sm:euler:stab3}
		\nS{v_k^n}^2_{\sV{s}} + k_n \nS{v_k^n}^2_{\sV{s+1}} \leq C \nS{v_k^{n-1}}^2_{\sV{s}} + \Cas{ns:low} k^{2- s} + C k_n (r_{k,3}^n, (-P\Delta)^s v_k^n).
	\end{equation}
	It remains to estimate the $r_{k,3}^n$-term. For $s = 0$ we use the cancellation property of the nonlinearity and \eqref{eq:nl:2b} to get
	\begin{align*}
		k_n (\oCdiff(u_k^n, u^n), v_k^n) &= k_n (v_k^n \cdot \nabla u^n, v_k^n) \\
		&\leq C k_n \nS{\nabla u^n}_{\sL2}^{\frac 1 2} \nS{\Delta u^n}_{\sL2}^{\frac 1 2} \nS{v_k^n}_{\sL2} \nS{v_k^n}_{\sV1}\\
		&\leq C(\delta) k_n \nS{\nabla u^n}_{\sL2} \nS{\Delta u^n}_{\sL2} \nS{v_k^n}_{\sL2}^2 + \delta k_n \nS{v_k^n}_{\sV1}^2.
	\end{align*}
	With suitable chosen $\delta > 0$ and using the stepsize condition we may move both terms to the left-hand side of \eqref{eq:th:ns:sm:euler:stab3}, implying that
	\begin{equation*}
		\nS{v_k^n}^2_{\sL2} + k_n \nS{v_k^n}^2_{\sV1} \leq C \nS{v_k^{n-1}}^2_{\sL2} + \Cas{ns:low} k^2.
	\end{equation*}
	Applying this estimate iteratively for $n = 1, \ldots, n_0$ we arrive at \eqref{eq:ns:sm:ie} for $s = 0$. Actually this also yields \eqref{eq:ns:sm:ie} for $s = 1$, but we still consider $s = 1$ in \eqref{eq:th:ns:sm:euler:stab3} to prove \eqref{eq:ns:sm:ie} for $s = 2$: We estimate  
	\begin{align*}
		k_n \spL{\oCdiff(u_k^n, u^n), (- P \Delta) v_k^n} \leq C(\delta) k \nS{\oCdiff(u_k^n, u_n)}_{\sL2}^2 + \delta k_n \nS{v_k^n}_{\sV2}^2
	\end{align*}
	and using the estimate for $s = 0$, $\nS{v_k^n}_{\sV1} \leq \Cas{ns:low}$, we get with \eqref{eq:nl:s0} and \eqref{eq:nl:sl1}:
	\begin{align*}
		\nS{\oCdiff(u_k^n, u^n)}_{\sL2} \leq C \left( \nS{v_k^n}_{\sV1}^{\frac 3 2} \nS{v_k^n}^{\frac12}_{\sV2} + \nS{v_k^n}_{\sV1} \nS{u^n}_{\sV2} \right) \leq \Cas{ns:low} \left(\nS{v_k^n}^{\frac12}_{\sV2}  + 1 \right).
	\end{align*}
	This implies
	\begin{align*}
		k_n \spL{\oCdiff(u_k^n, u^n), (- P \Delta) v_k^n} &\leq \Cas{ns:low}(\delta) k_n \left( \nS{v_k^n}_{\sV2} + 1 \right) + \delta k_n \nS{v_k^n}_{\sV2}^2\\
		&\leq \Cas{ns:low}(\delta) k + 2 \delta k_n \nS{v_k^n}_{\sV2}^2
	\end{align*}
	which inserted into \eqref{eq:th:ns:sm:euler:stab3}, moving the $\delta$-term to the left-hand side, yields
	\begin{equation*}
		\nS{v_k^n}^2_{\sV1} + k_n \nS{v_k^n}^2_{\sV2} \leq C \nS{v_k^{n-1}}^2_{\sV1} + \Cas{ns:low} k
	\end{equation*}
	and \eqref{eq:ns:sm:ie} for $s = 2$ follows. For $s = -1, -2$ we estimate the $r_{k,3}^n$-term in \eqref{eq:th:ns:sm:euler:stab3} by 
	\begin{equation*}
		k_n \spL{ \oCdiff(u_k^n, u^n), (-P\Delta)^s v_k^n} \leq \begin{cases} C(\delta) k \nS{\oCdiff(u_k^n, u^n)}^2_{\sH{-2}} + \delta k_n \nS{v_k^n}_{\sV{0}}^2 & \text{if $s = -1$},\\
		C(\delta) k^2 \nS{\oCdiff(u_k^n, u^n)}^2_{\sH{-2}} + \delta \nS{v_k^n}_{\sV{-2}}^2 & \text{if $s = -2$} \end{cases}
	\end{equation*}
	and combined with \eqref{eq:th:ns:sm:euler:stab3} this proves \eqref{eq:ns:sm:ie} for $s = -1, -2$ since we have
	\[ \nS{\oCdiff(u_k^n, u^n)}_{\sH{-2}} \leq C \left( \nS{v_k^n}_{\sL2} \nS{v_k^n}_{\sV1} + \nS{u^n}_{\sV2} \nS{v_k^n}_{\sV{-1}} \right) \leq \Cas{ns:low} k \]
	which follows from \eqref{eq:nl:sl1} and the estimate \eqref{eq:ns:sm:ie} for $s = 0$ and $s = 1$. For $s > 2$ the continuous smoothing result~\eqref{reg:ns:sm1} implies
	\begin{equation*}
		\nS{u(t_{n_0})}_{\sV{s}} = (\tau(t_{n_0}))^{\frac{r-s}{2}} (\tau(t_{n_0}))^{\frac{s-r}{2}} \nS{u(t_{n_0})}_{\sV{s}} \leq \Cas{ns:low} \, (t_{n_0})^{\frac{r-s}{2}} \leq \Cas{ns:low} k^{\frac{r-s}{2}}.
	\end{equation*}
	For the corresponding bound for $u_k$ we prove that for $m = 1, \ldots, n_0$ there holds
	\begin{equation}\label{eq:sm:initial:pf}
		\nS{u_k^n}_{\sV{2m+1}} \leq \Cas{ns:low} k^{-m + \frac 12}, \quad
		\nS{u_k^n}_{\sV{2m+2}} \leq \Cas{ns:low} k^{-m}
	\end{equation}
	for all $n = m, \ldots, n_0$. It is easy to see that \eqref{eq:sm:initial:pf} implies $\nS{u_k^{n_0}}_{\sV{s}} \leq \Cas{ns:low} k^{\frac{2-s}{2}}$ for $3 \leq s \leq s_0 + L$, e.g.\ for $s = s_0 + L = 2 + 2 s_0$ we have by \eqref{eq:sm:initial:pf} for $m \coloneqq n_0 = s_0$ that
	\begin{equation*}
		\nS{u_k^{n_0}}_{\sV{s_0 + L}} \leq \nS{u_k^{n_0}}_{\sV{2 m + 2}} \leq \Cas{ns:low} k^{-m} = \Cas{ns:low} k^{\frac{2 - s}{2}},
	\end{equation*}
	in particular the number of implicit Euler steps $n_0$ is sufficient. We prove \eqref{eq:sm:initial:pf} by induction over $m$, thus let $m = 1, \ldots, n_0$ and \eqref{eq:sm:initial:pf} be valid for smaller $m$ if $m > 1$. For the first estimate in \eqref{eq:sm:initial:pf} we use the a stability estimate, following from standard arguments:
	\begin{equation*}
		\nS{u_k^n}_{\sV{2m}}^2 + k_n \nS{u_k^n}^2_{\sV{2m+1}} \leq 2 \int_{I^n} \nS{P f(t)}^2_{\sV{2m}} \dif t + 2 \nS{u_k^{n-1}}^2_{\sV{2m}} + k_n \nS{u_k^n \cdot \nabla u_k^n}^2_{\sH{2m-1}}
	\end{equation*}
	for $n = m, \ldots, n_0$. Either by induction or, for $m = 1$, by the already proven estimate \eqref{eq:ns:sm:ie} with $s = 2$, we can bound $\nS{u_k^{n-1}}_{\sV{2m}}$ and trivially the $f$-term, yielding
	\begin{equation}\label{eq:th:ns:sm:euler:tmpA1}
		\nS{u_k^n}_{\sV{2m}}^2 + k_n \nS{u_k^n}^2_{\sV{2m+1}} \leq \Cas{ns:low} k^{2(1-m)} + k_n \nS{u_k^n \cdot \nabla u_k^n}^2_{\sH{2m-1}}.
	\end{equation}
	For the nonlinearity we combine \eqref{eq:nl:sge1} for all derivatives in the $\sH{2m-1}$-norm and \eqref{eq:nl:2} for the $\sL2$-term, to get
	\begin{equation}\label{eq:th:ns:sm:euler:tmpA1b}
		\nS{u_k^n \cdot \nabla u_k^n}_{\sH{2m-1}} \leq C \nS{u_k^n}_{\sV1} \nS{u_k^n}_{\sV2} + C \sum_{i = 1}^{2m-1} \nS{u_k^n}_{\sV{i+1}} \nS{u_k^n}_{\sV{2m-i+1}}.
	\end{equation}
	Noting that $2 \leq i + 1 \leq 2m$ and $2 \leq 2m - i + 1 \leq 2m$ for $i = 1, \ldots, 2m-1$, we can bound all terms on the right-hand side either by induction or using \eqref{eq:ns:sm:ie} for $s = 1, 2$:
	\begin{equation*}
		\nS{u_k^n \cdot \nabla u_k^n}_{\sH{2m-1}} \leq \Cas{ns:low} + \Cas{ns:low} \sum_{i = 1}^{2m-1} k^{\frac{2 - (i+1)}{2}} k^{\frac{2 - (2m + 1 -i)}{2}} \leq \Cas{ns:low} k^{\frac{2 - 2m}{2}} = \Cas{ns:low} k^{1-m}.
	\end{equation*}
	Combining this inequality with the stability estimate \eqref{eq:th:ns:sm:euler:tmpA1} we arrive at
	\begin{equation*}
		k_n \nS{u_k^n}^2_{\sV{2m+1}} \leq \nS{u_k^n}_{\sV{2m}}^2 + k_n \nS{u_k^n}^2_{\sV{2m+1}} \leq \Cas{ns:low} k^{2(1-m)}
	\end{equation*}
	which yields the first result in \eqref{eq:sm:initial:pf}. For the second estimate we employ the regularity results for the Stokes equations to get
	\begin{equation*}
		\nS{u_k^n}_{\sV{2m+2}} \leq C k^{-1} \left( \int_{I^n} \nS{P f(t)}_{\sV{2m}} \dif t + \nS{u_k^n}_{\sV{2m}} + \nS{u_k^{n-1}}_{\sV{2m}} \right) + C \nS{u_k^n \cdot \nabla u_k^n}_{\sV{2m}}.
	\end{equation*}
	Estimating the terms in brackets using \eqref{eq:sm:initial:pf} by induction, or \eqref{eq:ns:sm:ie} with $s = 2$, yields:
	\begin{equation}\label{eq:th:ns:sm:euler:tmpA2}
		\nS{u_k^n}_{\sV{2m+2}} \leq \Cas{ns:low} k^{-m} + C \nS{u_k^n \cdot \nabla u_k^n}_{\sV{2m}}.
	\end{equation}
	For the nonlinear term we proceed just as in \eqref{eq:th:ns:sm:euler:tmpA1b}, yielding
	\begin{equation*}
		\nS{u_k^n \cdot \nabla u_k^n}_{\sH{2m}} \leq C \nS{u_k^n}_{\sV1} \nS{u_k^n}_{\sV2} + C \sum_{i = 1}^{2m} \nS{u_k^n}_{\sV{i+1}} \nS{u_k^n}_{\sV{2m-i+2}}.
	\end{equation*}
	The norms on the right are at most of order $2m+1$. Hence using \eqref{eq:sm:initial:pf} by induction, the already proven first estimate in \eqref{eq:sm:initial:pf} for $m$, and estimates for $s = 1, 2$ we arrive at
	\begin{align*}
		\nS{u_k^n \cdot \nabla u_k^n}_{\sH{2m}} &\leq \Cas{ns:low} + \Cas{ns:low} \sum_{i = 1}^{2m} k^{\frac{2-(i+1)}{2}} k^{\frac{2-(2m - i + 2)}{2}} \\
		&\leq \Cas{ns:low} k^{\frac{1 - 2m}{2}} \leq \Cas{ns:low} k^{-m}.
	\end{align*}
	Combining this result with \eqref{eq:th:ns:sm:euler:tmpA2} implies $\nS{u_k^n}_{\sV{2m+2}} \leq \Cas{ns:low} k^{-m}$, i.e.\ the second estimate in \eqref{eq:sm:initial:pf}, completing the induction.
\end{proof}

\begin{theorem}\label{th:ns:sm:error}
	Let assumption~\ref{assumption:ns:low} hold. If $s_0 = 1$ there exists $C_0 > 0$ such that the Crank-Nicolson time discretization with $n_0 = 1$ implicit Euler steps \eqref{eq:ns:disc:sm} of the Navier-Stokes equations \eqref{eq:ns} satisfies on $J \coloneqq (t_1, T]$ the a priori error estimate
	\begin{equation}\label{eq:ns:sm:error-l2}
		\nTS{\tau_k^{\frac{3}{2}} (u - u_k)}_{\sLJ\infty \sV1} + \nTS{\tau_k^{\frac{3}{2}} \ak (u - u_k)}_{\sLJ2 \sV2} + \nTS{\tau_k^{\frac{3}{2}} (\ak p - p_k)}_{\sLJ2 \sQ1} \leq \Cas{ns:low} k^2
	\end{equation}
	if the stepsize condition $k < C_0 \nTS{u}_{\sL\infty\sV2}^{-2}$ is satisfied. If $s = 2$ there exists another $C_0 > 0$ such that after $n_0 = 2$ implicit Euler steps there holds with $J \coloneqq (t_2, T]$:
	\begin{equation}\label{eq:ns:sm:error-linf}
		\nTS{\tau_k^2 (u - u_k)}_{\sLJ\infty \sV2} + \nTS{\tau_k^2 \ak (u - u_k)}_{\sLJ2 \sV3} + \nTS{\tau_k^2 (\mk p - p_k)}_{\sLJ\infty \sQ1} \leq \Cas{ns:low} k^2
	\end{equation}
	if $k < C_0 \nTS{u}_{\sL\infty\sV2}^{-2}$ is satisfied. 
\end{theorem}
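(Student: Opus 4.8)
The plan is to follow the structure of theorem~\ref{th:st:sm:error}, treating the two discrete nonlinear contributions in the error identity~\eqref{eq:ns:error-u} as additional right-hand side terms that feed into the linear cascade of figure~\ref{fig:st:sm:cascade}, at the price of a \emph{second} cascade (the dashed dependencies of figure~\ref{fig:ns:sm:cascade}) generated by the nonlinearity. Since $r = 2$ we have $n_0 = 2 + s_0 - r = s_0$ and $L = 2 + s_0$, so that $\tau_k^{L/2}$ matches the weights $\tau_k^{3/2}$ and $\tau_k^2$ in~\eqref{eq:ns:sm:error-l2} and~\eqref{eq:ns:sm:error-linf}; moreover $k < C_0 \nTS{u}_{\sL\infty\sV2}^{-2}$ implies the stepsize condition of lemma~\ref{th:ns:sm:euler} because $\nTS{\nabla u}_{\sL\infty\sL2}\nTS{\Delta u}_{\sL\infty\sL2} \le \nTS{u}_{\sL\infty\sV2}^2$. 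Writing $v_k \coloneqq \ik u - u_k$, I would reduce everything to the velocity estimate
\begin{equation*}
	\nTS{\tau_k^{\frac L2} v_k}_{\sLJ\infty \sV{s_0}} + \nTS{\tau_k^{\frac L2} \ak v_k}_{\sLJ2 \sV{s_0+1}} + \nTS{\tau_k^{\frac L2} \partial_t v_k}_{\sLJ2 \sV{s_0-1}} \le \Cas{ns:low} k^2,
\end{equation*}
from which the velocity bounds follow by the splitting $u - u_k = (u - \ik u) + v_k$ and lemma~\ref{th:nodal-int-properties}, and the pressure bounds follow from the identity~\eqref{eq:ns:error-p} as in the Stokes case.

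I would prove this velocity estimate by induction over the smoothing levels $\ell = 0, \ldots, L$, establishing at each node with regularity $s = s_0 - L + \ell + 2i$ (so that $\ell + r - s = 4 - 2i$ for $i = 0, \ldots, L - \ell$) the bound
\begin{equation*}
	\nTS{\tau_k^{\frac\ell2} v_k}_{\sLJ\infty \sV{s}} + \nTS{\tau_k^{\frac\ell2} \ak v_k}_{\sLJ2 \sV{s+1}} + \nTS{\tau_k^{\frac\ell2} \partial_t v_k}_{\sLJ2 \sV{s-1}} \le \Cas{ns:low} k^{2-i}.
\end{equation*}
At each node I apply the smoothing stability estimate~\eqref{eq:st:sm:step} (or~\eqref{eq:disc-stab:div} when $\ell = 0$) to~\eqref{eq:ns:error-u}. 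The linear terms are dealt with verbatim as in theorem~\ref{th:st:sm:error}: the initial contribution is bounded by $k^{\ell/2}\nS{v_k^{n_0}}_{\sV{s}} \le \Cas{ns:low} k^{(\ell + r - s)/2} = \Cas{ns:low} k^{2-i}$ via lemma~\ref{th:ns:sm:euler}, the interpolation term by lemma~\ref{th:nodal-int-properties} and the continuous smoothing bound~\eqref{reg:ns:sm1}, and the two cascade terms at smoothing level $\ell - 1$ by the induction hypothesis. The nonlinear \emph{approximation} term $\ak P\oCdiff(u, \ak\ik u)$ is supplied by lemma~\ref{th:ns:sm:nonlin}: its weight $\tau_k^{(s+2)/2}$ combined with $\tau_k \ge Ck$ on $J$ (which holds since $\restr{\tau_k}{I^{n_0+1}} = t_{n_0} \ge Ck$) covers $\tau_k^{\ell/2} = \tau_k^{(s+2)/2}\tau_k^{-i}$ up to the factor $k^{-i}$, giving again $\Cas{ns:low} k^{2-i}$.

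The genuinely new and \emph{hardest} ingredient is the discrete nonlinear term $P\oCdiff(\ak u_k, \ak\ik u)$, which must be controlled in $\nTS{\tau_k^{\ell/2}\,\cdot\,}_{\sLJ2\sV{s-1}}$ by $\Cas{ns:low} k^{2-i}$. Using $\ak u_k - \ak\ik u = -\ak v_k$ and expanding $\oCdiff$ produces the quadratic term $(\ak v_k)\cdot\nabla(\ak v_k)$ and two terms in which a single factor $\ak v_k$ is paired with $\ak\ik u$. With the product estimates of lemma~\ref{th:ns:nonlin-props} I would bound each $\ak v_k$-factor by norms drawn from \emph{lower} nodes of the cascade and each $\ak\ik u$-factor by $\nTS{u}_{\sCJ0\sV2} \le \Cas{ns:low}$ from~\eqref{reg:ns:sm1}; the quadratic term, carrying two $\ak v_k$-factors, is of strictly higher order in $k$. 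The delicate point is closing the recursion at the top node, where the quadratic term reproduces the very norm $\nTS{\tau_k^{L/2}\ak v_k}_{\sLJ2\sV{s_0+1}}$ on the right-hand side, exactly as in~\eqref{eq:th:ns:approx-error-l2:nonlin}; here I would use an inverse inequality to keep the second factor bounded and the stepsize smallness $k < C_0\nTS{u}_{\sL\infty\sV2}^{-2}$ to absorb the offending term into the left-hand side through Young's inequality, in the manner of lemma~\ref{th:ns:approx-error-l2}.

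Finally, for the pressure I would apply $(P - \Id)\tau_k^{L/2}$ to~\eqref{eq:ns:error-p} and use Poincar\'e's inequality, the just-proven velocity estimate, lemma~\ref{th:ns:sm:nonlin}, and the bound on $P\oCdiff(\ak u_k, \ak\ik u)$ obtained in the cascade to get $\nTS{\tau_k^{3/2}(\ak p - p_k)}_{\sLJ2\sQ1} \le \Cas{ns:low} k^2$ for $s_0 = 1$ and its $\sLJ\infty$-analogue for $s_0 = 2$; in the latter case lemma~\ref{th:midpoint-quadratic-estimate} converts $\ak p$ into $\mk p$, yielding~\eqref{eq:ns:sm:error-linf}.
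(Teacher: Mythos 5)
Your global architecture is the paper's: reduce to a weighted bound for $v_k = \ik u - u_k$ by induction over a smoothing cascade, feed the initial error through lemma~\ref{th:ns:sm:euler}, the interpolation and continuous-nonlinearity residuals through lemma~\ref{th:nodal-int-properties} and lemma~\ref{th:ns:sm:nonlin}, and recover the pressure from \eqref{eq:ns:error-p}. However, there is a genuine gap in the induction you set up. You run the induction only over the sparse node set $s = s_0 - L + \ell + 2i$, $i = 0,\ldots,L-\ell$, i.e.\ the \emph{linear} cascade of figure~\ref{fig:st:sm:cascade}. But when you bound $\nTS{\tau_k^{\ell/2}\oCdiff(\ak u_k,\ak\ik u)}_{\sLJ2\sH{s-1}}$ for $s\geq 2$ via the product rule \eqref{eq:nl:sge1}, you need bounds of the form $\nTS{\tau_k^{(\ell-1)^+/2}\ak v_k}_{\sLJ2\sV{i+1}}$ for \emph{every} integer $i = 1,\ldots,s-1$, not just those of one parity. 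This is precisely why the paper proves the induction statement for all consecutive $s = \ell-2,\ldots,s_0+L-\ell$ at each level --- the double cascade of figure~\ref{fig:ns:sm:cascade}. You name the double cascade in your opening sentence, but the induction you actually formulate does not contain the intermediate (dashed) nodes, so the nonlinear estimates at the higher-regularity nodes cannot be closed from your hypothesis. Relatedly, the order in which the nodes of a single level are processed matters (the paper does $s=0$ first, postpones the $\partial_t v_k$ bound at $s=0$ until $s=1$ is done, etc.), and your plan is silent on this.

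The second gap is the treatment of $\oCdiff(\ak u_k,\ak\ik u)$ at the low-regularity nodes $s\in\set{0,-1,-2}$. Your recipe --- bound each $\ak v_k$-factor by lower nodes, keep one factor via an inverse inequality, absorb by Young --- works for $s\geq 1$, where the offending norm $\nTS{\tau_k^{\ell/2}\ak v_k}_{\sLJ2\sV{s+1}}$ reappears on the left-hand side with a small coefficient. At $s=0$ (the first node processed at each level, with nothing below it) the dangerous contribution is $\nTS{u}_{\sCJ0\sV2}^2\nTS{\tau_k^{\ell/2}\ak v_k}_{\sLJ2\sL2}^2$, which is \emph{not} controlled by the left-hand side and cannot be absorbed; the paper handles it by the cancellation identity $\spLL{\oCdiff(\ak u_k,\ak\ik u),\tau_k^\ell\ak v_k}_{J'} = \spLL{\ak v_k\cdot\nabla\ak\ik u,\tau_k^\ell\ak v_k}_{J'}$ followed by the discrete Gronwall lemma~\ref{th:gronwall}, as in lemma~\ref{th:ns:approx-error-h1}; this is also where the stepsize condition $k < C_0\nTS{u}_{\sL\infty\sV2}^{-2}$ actually enters. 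An analogous Gronwall step is needed again at $s=-1$ and $s=-2$. Without the antisymmetry-plus-Gronwall ingredient the base of your induction does not close.
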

\begin{proof}
	\newcommand\eqpre{eq:th:ns:sm:error}
	We note that $n_0 = s_0$ and the smoothing level in \eqref{eq:ns:sm:error-l2} and \eqref{eq:ns:sm:error-linf} is $L = 2 + s_0$. The procedure is just as in the linear case, i.e.\ theorem~\ref{th:st:sm:error}: For both $s_0 = 1$ and $s_0 = 2$ we first bound $v_k \coloneqq \ik u - u_k$ by induction over the smoothing cascade, use this to prove the velocity error estimates and then arrive at pressure estimates using \eqref{eq:ns:error-p}. We first prove, under the stepsize restriction not mentioned furthermore, that
	\begin{equation}\label{\eqpre:vk}
		\nTS{\tau_k^{\frac L 2} v_k}_{\sLJ\infty \sV{s_0}} + \nTS{\tau_k^{\frac L 2} \ak v_k}_{\sLJ2 \sV{s_0+1}} + \nTS{\tau_k^{\frac L 2} \partial_t v_k}_{\sLJ2 \sV{s_0-1}} \leq \Cas{ns:low} k^2.
	\end{equation}
	We proceed by induction over the levels and nodes of the (double) smoothing cascade: For $\ell = 0, \ldots, L$ and $s = \ell - 2, \ldots, s_0 + L - \ell$ we prove that
	\begin{equation}\label{\eqpre:tmp1}
		\nTS{\tau_k^{\frac \ell 2} v_k}^2_{\sLJ\infty \sV{s}} + \nTS{\tau_k^{\frac \ell 2} \ak v_k}^2_{\sLJ2 \sV{s+1}} + \nTS{\tau_k^{\frac \ell 2} \partial_t v_k}^2_{\sLJ2 \sV{s-1}} \leq \Cas{ns:low} k^{2 + \ell - s}.
	\end{equation}
	Then \eqref{\eqpre:vk} corresponds to $\ell = L$ and $s = s_0$. We prepare some estimates first: Just as in the linear case, or as in the proof of lemma~\ref{th:ns:sm:nonlin}, we get
	\begin{equation}\label{\eqpre:tmp2}
		\norm{\tau_k^{\frac \ell 2} (u - \ik u)}_{\sLJ2 \sV{s+1}} \leq C k^{\frac{2+\ell - s}{2}} \nTS{\tau^{\frac{2+s}{2}} \partial_{tt} u}_{\sLJ2 \sV{s+1}} \leq \Cas{ns:low} k^{\frac{2 + \ell - s}{2}}
	\end{equation}
	and from lemma~\ref{th:ns:sm:euler} that
	\begin{equation}\label{\eqpre:tmp3}
		k^{\frac \ell 2} \nS{v_k(t_{n_0})}_{\sV{s}} \leq \Cas{ns:low} k^{\frac{2 + \ell - s}{2}}.
	\end{equation}
	Furthermore, $t_{n_0} \geq C k$ and lemma~\ref{th:ns:sm:nonlin} implies, with $\ell - s - s \leq 0$, that
	\begin{equation}\label{\eqpre:tmp3b}
		\begin{aligned}
			\nTS{\tau_k^{\frac{\ell}{2}} \ak \oCdiff(u, \ak \ik u)}_{\sLJ2 \sH{s-1}} &\leq C \max_{t \in J} (\tau_k(t))^{\frac{\ell - 2 - s}{2}} \nTS{\tau_k^{\frac{2+s}{2}} \ak \oCdiff(u, \ak \ik u)}_{\sLJ2 \sH{s-1}} \\
			&\leq \Cas{ns:low} \restr[1]{\tau_k^{\frac{\ell - 2 -s}{2}}}{I^{n_0+1}} k^2 \leq \Cas{ns:low} k^{\frac{2 + \ell - s}{2}}.
		\end{aligned}
	\end{equation}
	The main difficulty in the proof of \eqref{\eqpre:tmp1} is the estimation of $\oCdiff(\ak u_k, \ak \ik u)$ in the error identity \eqref{eq:ns:error-u}. To resolve the dependencies we prove \eqref{\eqpre:tmp1} for $\ell = 0$ in the order $s = 0, -1, -2, 1, \ldots, s_0 + L$, for $\ell = 1$ in the order $s = 0, -1, 1, \ldots, s_0 + L - 1$ and for $\ell \geq 2$ in the order $s = -2 + \ell, \ldots, s_0 + L - \ell$. If we consider some $\ell = 0, \ldots, L$ we implicitly assume in the following that \eqref{\eqpre:tmp1} was proven for smaller $\ell$ if $\ell > 0$.

	If $\ell = 1, 2$ then $s = 0$ is a node in the smoothing cascade at level $\ell$. Proceeding for the error identity \eqref{eq:ns:error-u} just as in the smoothing stability estimate from lemma~\ref{th:st:sm:step} we have for $\ell > 0$ and $n = n_0 + 1, \ldots, N$ with $J' \coloneqq (t_{n_0}, t_n]$ that
	\begin{align*}
		\MoveEqLeft \nS{\restr[1]{\tau_k^{\frac{\ell}{2}}}{I^n} v_k(t_n)}^2_{\sL2} + \nTS{\tau_k^{\frac{\ell}{2}} \ak v_k}^2_{\sLJp2 \sV1}\\
		&\leq C \bigl( k^\ell \norm{v_k(t_{n_0})}_{\sL2}^2 + \norm{\tau_k^{\frac \ell 2}(u - \ik u)}_{\sLJp2\sV1}^2 + \norm{\tau_k^{\frac \ell 2} \ak \oCdiff(u, \ak \ik u)}_{\sLJp2 \sL2}^2 \\
		&\qquad+ \abs{\spLL{\oCdiff(\ak u_k, \ak \ik u), \tau_k^\ell \ak v_k}_{J'}} +\nTS{\tau_k^{\frac{\ell - 1}{2}} \ak v_k}_{\sLJp2\sL2}^2 + k^2 \nTS{\tau_k^{\frac{\ell -1}{2}} \partial_t v_k}_{\sLJp2\sL2}^2 \bigr).
	\end{align*}
	For the first three terms on the right-hand side we use \eqref{\eqpre:tmp2}, \eqref{\eqpre:tmp3} and \eqref{\eqpre:tmp3b} with $s = 0$. For the two last terms we use the validity of \eqref{\eqpre:tmp1} for $\ell-1$, just as in the linear case. This yields
	\begin{equation}\label{\eqpre:s0}
		\begin{aligned}
			\MoveEqLeft\nS{\restr[1]{\tau_k^{\frac{\ell}{2}}}{I^n} v_k(t_n)}^2_{\sL2} + \nTS{\tau_k^{\frac{\ell}{2}} \ak v_k}^2_{\sLJp2 \sV1} \\
			&\leq \Cas{ns:low} k^{2+\ell} + C \abs{\spLL{\oCdiff(\ak u_k, \ak \ik u), \tau_k^\ell \ak v_k}_{J'}}.
		\end{aligned}
	\end{equation}
	If $\ell = 0$ we can proceed similarly, without the terms $\ak v_k$ and $\partial_t v_k$ on the right-hand side, and also arrive at \eqref{\eqpre:s0}. For the nonlinear term in \eqref{\eqpre:s0} we use the cancellation property to get
	\begin{align*}
		\MoveEqLeft\spLL{\oCdiff(\ak u_k, \ak \ik u), \tau_k^\ell \ak v_k}_{J'} = \spLL{\ak v_k \cdot \nabla \ak \ik u, \tau_k^\ell \ak v_k}_{J'}\\
		&\leq C \nTS{u}_{\sCJp0\sV2} \nTS{\tau_k^{\frac \ell 2} \ak v_k}_{J'} \nTS{\tau_k^{\frac \ell 2} \ak v_k}_{\sLJp2 \sV1}.
	\end{align*}
	Using Young's inequality to move the last term to the left-hand side we get
	\begin{equation}\label{\eqpre:s0-1}
		\nS{\restr[1]{\tau_k^{\frac{\ell}{2}}}{I^n} v_k(t_n)}^2_{\sL2} + \nTS{\tau_k^{\frac{\ell}{2}} \ak v_k}^2_{\sLJp2 \sV1} \leq \Cas{ns:low} k^{2+\ell} + C \nTS{u}_{\sCJp0 \sV2}^2 \nTS{\tau_k^{\frac \ell 2} \ak v_k}^2_{J'}.
	\end{equation}
	With the elementary estimate
	\begin{equation}\label{\eqpre:gronprep}
		\nTS{\tau_k^{\frac \ell 2} \ak v_k}^2_{\sLJp2\sL2} = \tfrac 1 4 \sum_{j = n_0+1}^n k_j \restr{\tau_k^\ell}{I^j} \nS{v_k^j + v_k^{j-1}}_{\sL2}^2 \leq C \sum_{j = n_0}^{n} k_j \left(\restr[1]{\tau_k^{\frac \ell 2}}{I^j} \nS{v_k^j}_{\sL2}\right)^2
	\end{equation} 
	we can employ Gronwall's inequality in \eqref{\eqpre:s0-1} and conclude that
	\begin{equation}\label{\eqpre:s0-2}
		\nTS{\tau_k^{\frac \ell 2} v_k}^2_{\sLJ\infty \sL2} + \nTS{\tau_k^{\frac \ell 2} \ak v_k}^2_{\sLJ2 \sV1} \leq \Cas{ns:low} k^{2 + \ell}
	\end{equation}
	holds under the stepsize condition. This is \eqref{\eqpre:tmp1} for $s = 0$ except for the $\partial_t v_k$-term on the left-hand side, the proof of which must be postponed until $s = 1$ has been considered. Any level containing $s = 0$ also contains $s = 1$ and the $\partial_t v_k$-estimate is only used in level $\ell + 1$, so the arguments are not disturbed by this postponement.

	For $s = -1$ and $\ell = 0, 1$ or $s = -2$ and $\ell = 0$ we use arguments similar to those leading to \eqref{\eqpre:s0}, but now also norm estimates for the remaining nonlinear term, to get
	\begin{equation}\label{\eqpre:sl0}
		\begin{aligned}
			\MoveEqLeft\nS{\restr[1]{\tau_k^{\frac{\ell}{2}}}{I^n} v_k(t_n)}^2_{\sV{s}} + \nTS{\tau_k^{\frac{\ell}{2}} \ak v_k}^2_{\sLJp2 \sV{s+1}} \\
			&\leq \Cas{ns:low} k^{2+\ell-s} + C \nTS{\tau_k^{\frac{\ell}{2}} \oCdiff(\ak u_k, \ak \ik u)}_{\sLJp2 \sH{s-1}}^2.
		\end{aligned}
	\end{equation}
	Using \eqref{eq:nl:sl1} we can estimate
	\begin{equation}\label{\eqpre:pregron-pre}
		\begin{aligned}
			\MoveEqLeft\nTS{\tau_k^{\frac \ell 2} \oCdiff(\ak u_k, \ak \ik u)}_{\sLJp2\sH{s-1}}^2 \\
			&\leq C \left( \nTS{\ak v_k}^2_{\sLJp\infty\sL2} \nTS{\tau_k^{\frac \ell 2} \ak v_k}^2_{\sLJp2 \sV{s+2}} + \nTS{u}_{\sLJp\infty\sV2}^2 \nTS{\tau_k^{\frac \ell 2} \ak v_k}_{\sLJp2\sV{s}}^2 \right).
		\end{aligned}
	\end{equation}
	By \eqref{\eqpre:s0-2}, for the current $\ell$ and $\ell = 0$, we have $\nTS{\ak v_k}^2_{\sLJp\infty\sL2} \nTS{\tau_k^{\frac \ell 2} \ak v_k}^2_{\sLJp2 \sV1} \leq \Cas{ns:low} k^{4+\ell}$ from which we can conclude for $s = -1$ that
	\begin{equation}\label{\eqpre:pregron}
		\nTS{\tau_k^{\frac \ell 2} \oCdiff(\ak u_k, \ak \ik u)}_{\sLJp2\sH{s-1}}^2 \leq \Cas{ns:low} k^{2 + \ell - s} + C \nTS{u}_{\sLJp\infty\sV2}^2 \nTS{\tau_k^{\frac \ell 2} \ak v_k}_{\sLJp2\sV{s}}^2 .
	\end{equation}
	Using this estimate in \eqref{\eqpre:sl0} and proceeding just as in \eqref{\eqpre:gronprep} we can apply Gronwall's inequality to arrive at 
	\begin{equation*}
		\nTS{\tau_k^{\frac \ell 2} v_k}^2_{\sLJ\infty \sV{s}} + \nTS{\tau_k^{\frac \ell 2} \ak v_k}^2_{\sLJ2 \sV{s+1}} \leq \Cas{ns:low} k^{2 + \ell - s}
	\end{equation*}
	for $s = -1$. From \eqref{\eqpre:pregron} we get $\nTS{\tau_k^{\frac \ell 2} \oCdiff(\ak u_k, \ak \ik u)}_{\sLJp2\sH{s-1}}^2 \leq \Cas{ns:low} k^{2+\ell-s}$. This allows us to apply the	linear stability estimates from lemma~\ref{th:st:disc-stab} for $\ell = 0$ and lemma~\ref{th:st:sm:step} for $\ell = 1$ to arrive at the estimates for $\partial_t v_k$ as well, concluding the proof of \eqref{\eqpre:tmp1} for $s = -1$. For $s = -2$ we start at \eqref{\eqpre:sl0} and now use $\nTS{\ak v_k}^2_{\sLJp\infty\sL2} \nTS{\ak v_k}^2_{\sLJp2 \sL2} \leq \Cas{ns:low} k^5$ in \eqref{\eqpre:pregron-pre} from the established results for $s = 0$ and $s = -1$. Proceeding just as for $s = -1$ we arrive at \eqref{\eqpre:tmp1} for $s = -2$.
	
	For $s \geq 1$ we directly apply the linear stability estimates, from lemma~\ref{th:st:disc-stab} if $\ell = 0$ and lemma~\ref{th:st:sm:step} if $\ell > 0$, to the error identity \eqref{eq:ns:error-u}. Estimating the initial error, the linear residual $P \Delta (u - \ik u)$ and using the induction hypothesis for $\ell > 0$ just as for $s \leq 0$, we arrive at
	\begin{equation}\label{\eqpre:sge1}
		\begin{aligned}
			\MoveEqLeft\nTS{\tau_k^{\frac \ell 2} v_k}^2_{\sLJ\infty \sV{s}} + \nTS{\tau_k^{\frac \ell 2} \ak v_k}^2_{\sLJ2 \sV{s+1}} + \nTS{\tau_k^{\frac \ell 2} \partial_t v_k}^2_{\sLJ2 \sV{s-1}} \\
			&\leq \Cas{ns:low} k^{2 + \ell - s} + C \nTS{\tau_k^{\frac \ell 2} \oCdiff(\ak u_k, \ak \ik u)}^2_{\sLJ2 \sH{s-1}}.
		\end{aligned}
	\end{equation}
	For $s = 1$ and hence $0 \leq \ell \leq 3$ we have, using \eqref{eq:nl:2b} and \eqref{eq:nl:3b}, that
	\begin{align*}
		\MoveEqLeft\nTS{\tau_k^{\frac \ell 2} \oCdiff(\ak u_k, \ak \ik u)}_{\sLJ2\sL2}^2\\
		&\leq C \int_J \tau_k^\ell \nS{\ak v_k}_{\sV1}^3 \nS{\ak v_k}_{\sV2} + \tau_k^\ell \nS{\ak v_k}^2_{\sV1} \nS{\ak \ik u}^2_{\sV2} \dif t\\
		&\leq C(\delta) ( \nTS{\ak v_k}_{\sLJ\infty\sV1}^4 + \nTS{u}_{\sC0 \sV1}^2) \nTS{\tau_k^{\frac \ell 2} \ak v_k}^2_{\sLJ2\sV1} + \delta \nTS{\tau_k^{\frac \ell 2} \ak v_k}^2_{\sLJ2 \sV2}
	\end{align*}
	with $\delta > 0$ to be chosen later. Estimate \eqref{\eqpre:s0-2} on the level $0 \leq (\ell - 1)^+ \leq 2$ yields
	\begin{equation*}
		\nTS{\tau_k^{\frac \ell 2} \ak v_k}^2_{\sLJ2 \sV1} \leq \nTS{\tau_k^{\frac{(\ell-1)^+}{2}} \ak v_k}^2_{\sLJ2 \sV1} \leq \Cas{ns:low} k^{2 + (\ell - 1)^+} \leq \Cas{ns:low} k^{1 + \ell}.
	\end{equation*}
	The inverse inequality implies that $\nTS{\ak v_k}_{\sLJ\infty \sV1} \leq C k^{-\frac 12} \nTS{\ak v_k}_{\sLJ2 \sV1} \leq \Cas{ns:low}$ and thus 
	\begin{equation*}
		\nTS{\tau_k^{\frac \ell 2} \oCdiff(\ak u_k, \ak \ik u)}_{\sLJ2\sL2}^2 \leq \Cas{ns:low}(\delta) k^{1+\ell} + \delta \nTS{\tau_k^{\frac \ell 2} \ak v_k}^2_{\sLJ2 \sV2}.
	\end{equation*}
	Combining this with \eqref{\eqpre:sge1} and moving the last term to the left-hand side for $\delta$ small enough, we arrive at \eqref{\eqpre:tmp1} for $s = 1$. For $s \geq 2$ we use \eqref{eq:nl:sge1} to get 
	\begin{equation}\label{\eqpre:tmp4a}
		\begin{aligned}
			\MoveEqLeft\nS{\tau_k^{\frac \ell 2} \oCdiff(\ak u_k, \ak \ik u)}_{\sLJ2\sH{s-1}}^2 \\
			&\leq C \nS{\tau_k^{\frac \ell 2} \oCdiff(\ak u_k, \ak \ik u)}_{\sLJ2\sL2}^2 \\
			&\quad+ C \sum_{i = 1}^{s-1} \nS{\tau_k^{\frac \ell 2} \ak v_k}^2_{\sLJ2\sV{i+1}} \left( \nS{\ak v_k}^2_{\sLJ\infty\sV{s-i+1}} + \nS{\ak \ik u}_{\sLJ\infty\sV{s-i+1}}^2 \right).
		\end{aligned}
	\end{equation}
	By similar arguments as for the case $s = 1$ we have
	\begin{equation}\label{\eqpre:tmp4}
		\nTS{\tau_k^{\frac \ell 2} \oCdiff(\ak u_k, \ak \ik u)}_{\sLJ2\sL2}^2 \leq \Cas{ns:low} \nTS{\tau_k^{\frac \ell 2} \ak v_k}^2_{\sLJ2\sV1} + C \nTS{\tau_k^{\frac \ell 2} \ak v_k}^2_{\sLJ2 \sV2}.
	\end{equation}
	Using \eqref{\eqpre:s0-2} for the first term we get, since $0 \leq (\ell - s)^+ \leq 2$, that
	\begin{equation*}
		\nTS{\tau_k^{\frac \ell 2} \ak v_k}^2_{\sLJ2\sV1} \leq \nTS{\tau_k^{\frac{(\ell - s)^+}{2}} \ak v_k}^2_{\sLJ2\sV1} \leq \Cas{ns:low} k^{2+(\ell-s)^+} \leq \Cas{ns:low} k^{2 + \ell -s}
	\end{equation*}
	and using \eqref{\eqpre:tmp1} for $s = 1$, since $0 \leq (\ell - s + 1)^+ \leq 3$, that 
	\begin{equation*}
		\nTS{\tau_k^{\frac \ell 2} \ak v_k}^2_{\sLJ2 \sV2} \leq \nTS{\tau_k^{\frac{(\ell - s + 1)^+}{2}} \ak v_k}^2_{\sLJ2 \sV2} \leq \Cas{ns:low} k^{2+(\ell - s + 1)^+ -1} \leq \Cas{ns:low} k^{2 + \ell - s}.
	\end{equation*}
	From \eqref{\eqpre:tmp4} we hence conclude the $\sL2$-estimate for the nonlinear term:
	\begin{equation}\label{\eqpre:tmp5}
		\nTS{\tau_k^{\frac \ell 2} \oCdiff(\ak u_k, \ak \ik u)}_{\sLJ2\sL2}^2 \leq \Cas{ns:low} k^{2+\ell-s}.
	\end{equation}
	For the remaining terms in \eqref{\eqpre:tmp4a} we want to prove that, for $i = 1, \ldots, s -1$,
	\begin{equation}\label{\eqpre:tmp6}
		\nTS{\tau_k^{\frac{\ell}{2}} \ak v_k}^2_{\sLJ2\sV{i+1}} \nTS{\ak w_k}^2_{\sLJ\infty\sV{s-i+1}} \leq \Cas{ns:low} k^{2+\ell-s} 
	\end{equation}
	with either $w_k = v_k$ or $w_k = \ik u$. Combined with \eqref{\eqpre:tmp5} this would imply
	\begin{equation}\label{\eqpre:tmp7}
		\nS{\tau_k^{\frac \ell 2} \oCdiff(\ak u_k, \ak \ik u)}_{\sLJ2\sH{s-1}}^2 \leq \Cas{ns:low} k^{2+\ell-s}
	\end{equation}
	which together with \eqref{\eqpre:sge1} would conclude our proof of \eqref{\eqpre:tmp1} for $s \geq 2$. For the first term in \eqref{\eqpre:tmp6} we use that
	\begin{equation}\label{\eqpre:tmp8}
		\norm{\tau_k^{\frac{(\ell-1)^+}{2}} \ak v_k}^2_{\sLJ2\sV{i+1}} \leq \Cas{ns:low} k^{2+(\ell -1)^+-i}
	\end{equation}
	for $i = 1, \ldots, s-1$ by \eqref{\eqpre:tmp1}. These $i$ belong to the cascade at level $(\ell-1)^+$ since $s_0 \leq 2$ implies $(\ell - 1)^+ -2 \leq L - 3 \leq s_0 - 1 \leq 1 \leq i$ and from $s \leq s_0 + L - \ell$ it follows that $i \leq s - 1 \leq s_0 + L - (\ell - 1) \leq s_0 + L - (\ell-1)^+$. For the second factor in \eqref{\eqpre:tmp6} we combine in case $w_k = v_k$ the inverse inequality with \eqref{\eqpre:tmp1} for level $0$:
	\begin{equation*}
		\norm{\ak v_k}_{\sLJ\infty \sV{s-i+1}}^2 \leq C k^{-1} \norm{\ak v_k}_{\sLJ2 \sV{s-i+1}}^2 \leq \Cas{ns:low} k^{-1} k^{2-(s-i)} \leq \Cas{ns:low} k^{2 - (s-i+1)}.
	\end{equation*}
	For $w_k = \ik u$ we use the smoothing estimate of $u$ to get
	\begin{equation*}
		\norm{\ak \ik u}_{\sLJ\infty \sV{s-i+1}}^2 \leq C k^{-(s-i-1)^+} \norm{\tau^{\frac{(s-i-1)^+}{2}} u}_{\sCJ0 \sV{s-i+1}}^2 \leq \Cas{ns:low} k^{-(s-i-1)^+}.
	\end{equation*}
	Combining these two estimates for $w_k$ with \eqref{\eqpre:tmp8} we get for \eqref{\eqpre:tmp6} that
	\begin{equation*}
		\nTS{\tau_k^{\frac{\ell}{2}} \ak v_k}^2_{\sLJ2\sV{i+1}} \nTS{\ak w_k}^2_{\sLJ\infty\sV{s-i+1}} \leq \Cas{ns:low} k^{2+(\ell-1)^+ - i - (s-i-1)^+}
	\end{equation*}
	and a simple, but tedious, examination of all cases yields that \eqref{\eqpre:tmp6} indeed holds, implying \eqref{\eqpre:tmp7} and hence the claimed \eqref{\eqpre:tmp1} for $s \geq 2$.

	To finish the proof of \eqref{\eqpre:tmp1} we must show $\norm{\tau_k^{\frac \ell 2} \partial_t v_k}^2_{\sLJ2 \sV{-1}} \leq \Cas{ns:low} k^{2+\ell}$ from the case $s = 0$. A procedure similar to e.g.\ $s = -1$, i.e.\ using the linear stability, yields
	\begin{equation}\label{\eqpre:tmp9}
		\norm{\tau_k^{\frac \ell 2} \partial_t v_k}^2_{\sLJ2 \sV{-1}} \leq \Cas{ns:low} k^{2+\ell} + \nTS{\tau_k^{\frac \ell 2} \oCdiff(\ak u_k, \ak \ik u)}_{\sLJ2\sH{-1}}^2.
	\end{equation}
	From \eqref{\eqpre:pregron-pre} with $s = 0$ we get
	\begin{equation*}
		\nTS{\tau_k^{\frac \ell 2} \oCdiff(\ak u_k, \ak \ik u)}_{\sLJ2\sH{-1}}^2 \leq C \nTS{\ak v_k}^2_{\sLJ\infty\sL2} \nTS{\tau_k^{\frac \ell 2} \ak v_k}^2_{\sLJ2 \sV2} + \Cas{ns:low} \nTS{\tau_k^{\frac \ell 2} \ak v_k}_{\sLJ2\sL2}^2.
	\end{equation*}
	Using the estimate \eqref{\eqpre:s0-2} together with \eqref{\eqpre:tmp1} for $s = -1$ and $s = 1$ we get
	\begin{equation*}
		\nTS{\tau_k^{\frac \ell 2} \oCdiff(\ak u_k, \ak \ik u)}_{\sLJ2\sH{-1}}^2 \leq \Cas{ns:low} ( k^2 k^{2+\ell-1} + k^{2+\ell+1} ) \leq \Cas{ns:low} k^{2 + \ell}
	\end{equation*}
	which, inserted into \eqref{\eqpre:tmp9}, concludes the case $s = 0$ and hence the proof of \eqref{\eqpre:tmp1}. Splitting $u - u_k = (u - \ik u) + (\ik u - u_k)$ and using \eqref{\eqpre:tmp1} and
	\begin{equation*}
		\norm{\tau_k^{\frac{L}{2}} (u - \ik u)}_{\sLJ\infty \sV{s_0}} \leq C k^2 \norm{\tau_k^{\frac{L}{2}} \partial_{tt} u}_{\sLJ\infty \sV{s_0}} \leq C k^2 \norm{\tau^{\frac{L}{2}} \partial_{tt} u}_{\sLJ\infty \sV{s_0}} \leq \Cas{ns:low} k^2.
	\end{equation*}
	and similarly $\norm{\tau_k^{\frac L 2} \ak (u - \ik u)}_{\sLJ2\sV{s_0+1}} \leq \Cas{ns:low} k^2$, we arrive at the velocity error estimates in \eqref{eq:ns:sm:error-l2} and \eqref{eq:ns:sm:error-linf}:
	\begin{equation}\label{\eqpre:v}
		\norm{\tau_k^{\frac{L}{2}} (u - u_k)}_{\sLJ\infty \sV{s_0}} + \norm{\tau_k^{\frac{L}{2}} \ak (u - u_k)}_{\sLJ2 \sV{s_0+1}} \leq \Cas{ns:low} k^2.
	\end{equation}

	For the pressure error we use the error identity~\eqref{eq:ns:error-p} and Poincar\'e's inequality. For $s_0 = 1$ we get
	\begin{align*}
		\nTS{\tau_k^{\frac 32} (\ak p - p_k)}_{\sLJ2 \sQ1} \leq C \Bigl( &\nTS{\tau_k^{\frac 32} \ak (u - u_k)}_{\sLJ2 \sV2} \\&+ \nTS{\tau_k^{\frac 32} \ak \oCdiff(u, \ak \ik u)}_{\sLJ2 \sL2} + \nTS{\tau_k^{\frac 32} \oCdiff(\ak u_k, \ak \ik u)}_{\sLJ2 \sL2} \Bigr).
	\end{align*}
	For the terms on the right we use the velocity estimate \eqref{\eqpre:v}, lemma~\ref{th:ns:sm:nonlin} and \eqref{\eqpre:tmp5} for $s = s_0$ and $\ell = L$. This yields as claimed
	\begin{align*}
		\nTS{\tau_k^{\frac 32} (\ak p - p_k)}_{\sLJ2 \sQ1} \leq \Cas{ns:low} k^2
	\end{align*}
	which finishes the proof of \eqref{eq:ns:sm:error-l2}. For $s_0 = 2$ we use lemma~\ref{th:midpoint-quadratic-estimate} to estimate
	\begin{align*}
		\nTS{\tau_k^2 (\mk p - p_k)}_{\sLJ\infty \sQ1} &\leq \nTS{\tau_k^2 (\mk p - \ak p)}_{\sLJ\infty \sQ1} + \nTS{\tau_k^2 (\ak p - p_k)}_{\sLJ\infty \sQ1} \\
		&\leq C k^2 \nTS{\tau^2 \partial_{tt} p}_{\sLJ\infty \sQ1} + \nTS{\tau_k^2 (\ak p - p_k)}_{\sLJ\infty \sQ1}
	\end{align*}
	and by \eqref{reg:ns:sm2} there holds $\nTS{\tau^2 \partial_{tt} p}_{\sLJ\infty \sQ1} \leq \Cas{ns:low}$. It hence remains to estimate $\ak p - p_k$. The pressure error identity implies
	\begin{equation}\label{\eqpre:pinf}
		\begin{aligned}
			\nTS{\tau_k^2 (\ak p - p_k)}_{\sLJ\infty \sQ1} \leq C \Bigl( &\nTS{\tau_k^2 \ak (u - u_k)}_{\sLJ\infty\sV2} + \nTS{\tau_k^2 \ak \oCdiff(u, \ak \ik u)}_{\sLJ\infty \sL2} \\
			&+ \nTS{\tau_k^2 \oCdiff(\ak u_k, \ak \ik u)}_{\sLJ\infty \sL2} \Bigr).
		\end{aligned}
	\end{equation}
	For the first term on the right we again use the velocity estimate \eqref{\eqpre:v}. For the nonlinear terms no temporal $\sL\infty$-estimates have been derived so far. Modifying the proof leading to \eqref{eq:ns:sm:nonlin} for $s = 1$ in lemma~\ref{th:ns:sm:nonlin} we get
	\begin{align*}
		\nTS{\tau_k^2 \oCdiff(u, \ak \ik u)}_{\sLJ\infty \sL2} &\leq C k^2 (\nTS{\partial_t u}_{\sLJ\infty \sL2} \norm{\tau_k^2 \partial_t u}_{\sLJ\infty\sV3} + \nTS{u}_{\sCJ0 \sV2} \norm{\tau_k^2 \partial_{tt} u}_{\sLJ\infty \sV2}).
	\end{align*}
	Using \eqref{reg:ns:sm1} this implies that
	\begin{equation}\label{\eqpre:tmp10}
		\nTS{\tau_k^2 \oCdiff(u, \ak \ik u)}_{\sLJ\infty \sL2} \leq \Cas{ns:low} k^2.
	\end{equation}
	For the second nonlinear term we can proceed as above for $s = 1$, after \eqref{\eqpre:sge1}, to get
	\begin{equation}\label{\eqpre:tmp11}
		\begin{aligned}
			\MoveEqLeft\nTS{\tau_k^2 \oCdiff(\ak u_k, \ak \ik u)}_{\sLJ\infty \sL2} \\
			&\leq C \left( \nTS{v_k}_{\sLJ\infty \sV1} \nTS{\tau_k^{\frac 32} v_k}_{\sLJ\infty \sV1}^{\frac 12} \nTS{\tau_k^2 v_k}_{\sLJ\infty \sV2}^{\frac 12} + \nTS{\tau_k^{\frac 32} v_k}_{\sLJ\infty \sV1} \nTS{u}_{\sCJ0 \sV2} \right).
		\end{aligned}
	\end{equation}
	By \eqref{\eqpre:tmp1} for $\ell = 0$ and $s = 1$ we get $\nTS{v_k}_{\sLJ\infty \sV1} \leq \Cas{ns:low} k^{\frac 12}$, for $\ell = 3$ and $s = 1$ we get $\nTS{\tau_k^{\frac 32} v_k}_{\sLJ\infty \sV1} \leq \Cas{ns:low} k^2$ and for $\ell = 4$ and $s = 2$ we get $\nTS{\tau_k^2 v_k}_{\sLJ\infty \sV2} \leq \Cas{ns:low} k^2$. Using these estimates in \eqref{\eqpre:tmp11} implies
	\begin{equation*}
		\nTS{\tau_k^2 \oCdiff(\ak u_k, \ak \ik u)}_{\sLJ\infty \sL2} \leq \Cas{ns:low} k^2.
	\end{equation*}
	Using this estimate, the velocity estimate \eqref{\eqpre:v} and \eqref{\eqpre:tmp10} in \eqref{\eqpre:pinf} we conclude that
	\begin{equation*}
		\nTS{\tau_k^2 (\ak p - p_k)}_{\sLJ\infty \sQ1} \leq \Cas{ns:low} k^2
	\end{equation*}
	which finishes the proof of \eqref{eq:ns:sm:error-linf}.
\end{proof}

\begin{remark}
	The stepsize condition in theorem~\ref{th:ns:sm:error} is stronger than in section~\ref{sec:high-reg} without smoothing. This is due to the application of Gronwall's inequality in \eqref{eq:th:ns:sm:error:vk} for $s = -2$ where sharper estimates for the nonlinearity, like \eqref{eq:nl:s0}, could not be proven.
\end{remark}

\section{Numerical Study}\label{sec:num}

We present a numerical study illustrating the optimality of the error estimates and the necessity to consider both a weighted norm and initial Euler steps, if the initial data does not satisfy the compatibility conditions. On the unit disk $\Omega=\{(x,y)\in\mathbb{R}^2\,:\, x^2+y^2<1\}$ and the temporal interval $I=[0,2]$ we study the Navier-Stokes equations with homogeneous Dirichlet data
\[
\partial_t u - 0.01 \Delta u + u \cdot \nabla u + \nabla p = f, \quad \div u = 0 \text{ in $\Omega$},\quad 
u=0 \text{ on $\partial\Omega$}.
\]
Two different configurations are considered. First, \emph{(i)}, we
prescribe homogeneous initial data $u^0=0$ and the smooth right hand side 
\[
f(x,y,t) = 0.2 t^2\exp(-t) \big(-\sin(4x+y)y,\cos(x-4y)x\big). 
\]
It holds $f(\cdot,0)=0$ and $\partial_t f(\cdot,0)=0$ such that the data satisfies the compatibility conditions. Second, \emph{(ii)}, we consider the homogeneous right hand side $f=0$ and determine the initial condition $u^0$  as solution to the stationary Navier-Stokes problem
\[
- 0.01 \Delta u^0 + u^0 \cdot \nabla u^0 + \nabla p^0 =
0.2 \operatorname{sgn}(x)\operatorname{sgn}(y)\big(-\sin(4x+y)y,\cos(x-4y)x\big)\text{ in $\Omega$}
\]
with $u^0=0$ on the boundary. Since the domain is regular and the initial right hand side is just in $L^2(\Omega)$, it holds $u^0\in V^2$. The compatibility condition is not satisfied with the right hand side $f=0$.

Spatial discretization is accomplished with quadratic finite elements for velocity and pressure on a mesh with mesh size $h\approx 0.0025$. To cope with the missing inf-sup stability we employ the local projection stabilization, see~\cite{BeckerBraack2001}. For temporal discretization we use the Crank-Nicolson scheme as discussed in this paper. To avoid superconvergence effects by symmetry, we consider a base step size $k\in \{0.02,0.01,0.005,0.0025\}$ and add an alternating variation $0.8k,1.2k,0.8k,1.2k,\dots$ in both test cases. Without this modification, no reduction in convergence could be observed in case \emph{(ii)}, even if no initial Euler steps where performed.  The nonlinear problems are approximated with a Newton scheme, the resulting linear systems are solved with a geometric multigrid solver. For details on the implementation in \textsc{Gascoigne 3D}~\cite{Gascoigne3D} see~\cite[chapters 4, 7]{Richter2017}. A reference solution $p_{k_0,h}$ is computed on a uniform time mesh with $M_0=\frac{2}{k_0}$ steps of size $k_0=0.0005$ on the same spatial mesh. Pressure errors are evaluated in the $L^2$- and the $L^\infty$-norm, approximated by the midpoint rule on the reference subdivision with stepsize $k_0$ in time and by the Euclidean $l^2$-norm on the fixed discretization in space. With $t_m=(m-\ahalf)k_0$
\[
\begin{aligned}
  \|p_{k,h}-p_{k_0,h}\|_{L^2 l^2}&:= \left(\sum_{m=1}^{M_0}k_0 \sum_{i=1}^N|p_{k,h}(t_m,x_i)-p_{k_0,h}(t_m,x_i)|^2\right)^\frac{1}{2},\\
  \|p_{k,h}-p_{k_0,h}\|_{L^\infty l^2}&:= \max_{m=1,\dots,M_0} \left( \sum_{i=1}^N|p_{k,h}(t_m,x_i)-p_{k_0,h}(t_m,x_i)|^2\right)^\frac{1}{2},
\end{aligned}
\]
where we denote by $x_i$ for $i=1,\dots,N$ the nodes of the spatial mesh.

\begin{figure}[t]
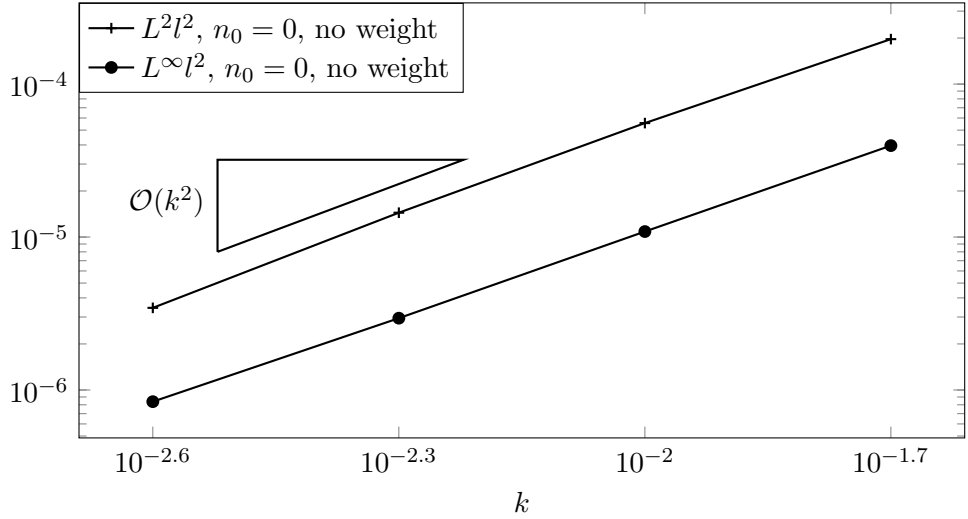

  \begin{center}
    \tikloglogpic{width=0.9\textwidth,height=0.5\textwidth,xlabel=$k$,legend style={at={(0,1)},anchor=north
        west,nodes=right},xtick={0.0025,0.005,0.01,0.02}}{
      \addplot[solid,mark=+,thick,color=black] table[row sep=crcr]{
        0.02 0.00019672953143202722\\
        0.01 5.55740877829025e-05\\
        0.005 1.4422860728274087e-05\\
        0.0025 3.4440003926816766e-06\\
      };
      \addlegendentry{$L^2l^2$, $n_0=0$, no weight}
      \addplot[solid,mark=*,thick,color=black] table[row sep=crcr]{
        0.02 3.959932059479635e-05\\
        0.01 1.0860960288764174e-05\\
        0.005 2.947917119028932e-06\\
        0.0025 8.388585709110509e-07\\
      };
      \addlegendentry{$L^\infty l^2$, $n_0=0$, no weight}
      \addplot[solid,thick,color=black] table[row sep=crcr]{
        0.003 8.e-6\\
        0.003 32.e-6\\
        0.006 32.e-6\\
        0.003 8.e-6\\
      };
      \node at (axis cs: 0.0026,17.e-6){${\cal O}(k^2)$};
    }
  \end{center}
  \caption{Results for configuration \emph{(i)} satisfying the
    compatibility condition. We observe optimal second order
    convergence in both norms. No initial Euler steps are required.} 
  \label{num:i}
\end{figure}

\begin{figure}[t]
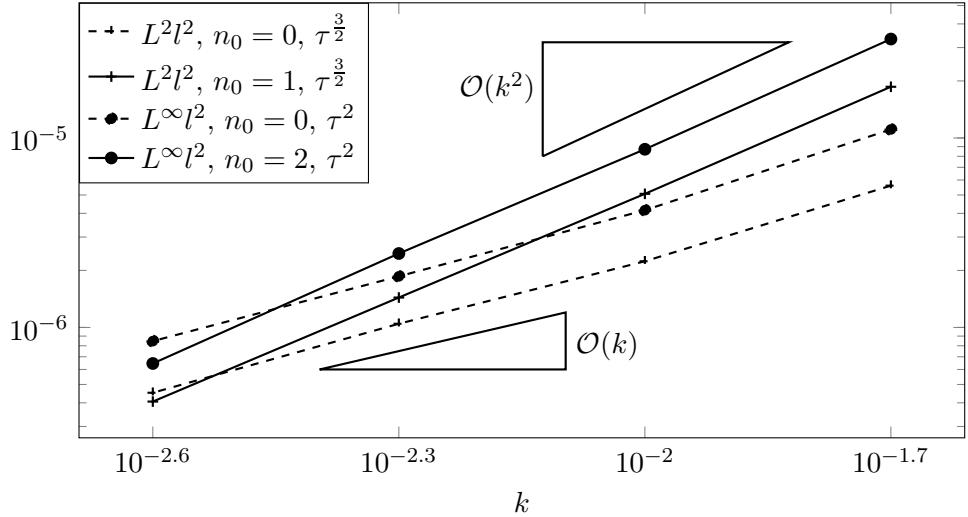

  \begin{center}
\tikloglogpic{width=0.9\textwidth,height=0.5\textwidth,xlabel=$k$,legend
  style={at={(0,1)},anchor=north
    west,nodes=right},xtick={0.0025,0.005,0.01,0.02}}{
  \addplot[dashed,mark=+,thick,color=black] table[row sep=crcr]{
    0.02 5.602058379894919e-06\\
    0.01 2.234497427972823e-06\\
    0.005 1.0455313858449752e-06\\
    0.0025 4.5201940148665017e-07\\
  };  \addlegendentry{$L^2l^2$, $n_0=0$, $\tau^\frac{3}{2}$}
  \addplot[solid,mark=+,thick,color=black] table[row sep=crcr]{
    0.02 1.8631356324343365e-05\\
    0.01 5.070903977473569e-06\\
    0.005 1.4378724265941399e-06\\
    0.0025 4.061031940478063e-07\\
  };
  \addlegendentry{$L^2l^2$, $n_0=1$, $\tau^\frac{3}{2}$}
  \addplot[dashed,mark=*,thick,color=black] table[row sep=crcr]{
    0.02 1.1093268300353136e-05\\
    0.01 4.156438299121408e-06\\
    0.005 1.8607606521684717e-06\\
    0.0025 8.466292392011256e-07\\
  };
  \addlegendentry{$L^\infty l^2$, $n_0=0$, $\tau^2$}
  \addplot[solid,mark=*,thick,color=black] table[row sep=crcr]{
    0.02 3.3272300776590364e-05\\
    0.01 8.716040443588668e-06\\
    0.005 2.455710657479571e-06\\
    0.0025 6.451721359237861e-07\\
  };
  \addlegendentry{$L^\infty l^2$, $n_0=2$, $\tau^2$}
  \addplot[solid,thick,color=black] table[row sep=crcr]{
    0.004  0.6e-6\\
    0.008  0.6e-6\\
    0.008  1.2e-6\\
    0.004  0.6e-6\\
    };
  \addplot[solid,thick,color=black] table[row sep=crcr]{
    0.0075 8.e-6\\
    0.0075 32.e-6\\
    0.015  32.e-6\\
    0.0075 8.e-6\\
  };
  \node at (axis cs: 0.009,0.8e-6){${\cal O}(k)$};
  \node at (axis cs: 0.0066,18.4e-6){${\cal O}(k^2)$};
}
  \end{center}
  \caption{Results for configuration \emph{(ii)}. Without initial Euler
    steps, $n_0=0$, only first order convergence is observed for the
    pressure 
    in both norms. By adding $n_0=1$ Euler step, we recover second
    order convergence in the $L^2l^2$ norm and by starting with
    $n_0=2$ Euler steps, we obtain second order in the $L^\infty l^2$
    norm. } 
  \label{num:ii}
\end{figure}

The resulting convergence behavior is shown in figure~\ref{num:i} for case \emph{(i)} and figure~\ref{num:ii} for configuration \emph{(ii)}. In configuration \emph{(i)} we observe optimal second order convergence without any weighting of the norms and without adding Euler steps. Configuration \emph{(ii)} shows the expected loss of optimality, as the problem regularity is not sufficient. Optimal order convergence is recovered if we add weights to the norms and if we start the procedure with implicit Euler steps according to theorem~\ref{th:ns:sm:error}. Adding a proper amount of Euler steps increases the convergence from first to second order. Without weighting the norms, convergence rates drop to approximately $\sqrt{k}$.



\section*{Acknowledgements}
TR acknowledges the financial support by the Federal Ministry of Education and Research of Germany, grant number 05M16NMA as well as the GRK 2297 MathCoRe, funded by the Deutsche Forschungsgemeinschaft, grant number 314838170. FS acknowledges the financial support by the GRK 2339 IntComSin, funded by the Deutsche Forschungsgemeinschaft.


\begin{thebibliography}{10}

\bibitem{AzizMonk1989}
A.K. Aziz and P.~Monk.
\newblock Continuous finite elements in space and time for the heat equation.
\newblock {\em Mathematics of Computation}, 186:255--274, 1989.

\bibitem{BeckerBraack2001}
R.~Becker and M.~Braack.
\newblock A finite element pressure gradient stabilization for the {S}tokes
  equations based on local projections.
\newblock {\em Calcolo}, 38(4):173--199, 2001.

\bibitem{Gascoigne3D}
R.~Becker, M.~Braack, D.~Meidner, T.~Richter, and B.~Vexler.
\newblock The finite element toolkit \textsc{Gascoigne}.
\newblock \textsc{http://www.gascoigne.uni-hd.de}.

\bibitem{ChrysafinosKaratzas2015}
K.~Chrysafinos and E.N. Karatzas.
\newblock Symmetric error estimates for discontinuous galerkin time-stepping
  schemes for optimal control problems constrained to evolutionary stokes
  equations.
\newblock {\em Comput. Optim. Appl.}, 60, 2015.

\bibitem{deFrutosArchillaJohnNovo2016}
J.~{de Frutos}, B.~Garc\'ia-Archilla, V.~John, and J.~Novo.
\newblock Grad-div stabilization for the evolutionary oseen problem with
  inf-sup stable finite elements.
\newblock {\em J. of Scientific Computing}, 66(3):991--1024, 2016.

\bibitem{deFrutosArchillaJohnNovo2018}
J.~{de Frutos}, B.~Garc\'ia-Archilla, V.~John, and J.~Novo.
\newblock Error analysis of non inf-sup stable discretizations of the
  time-dependent navier-stokes equations with local projection stabilization.
\newblock {\em IMA J. of Numer. Analysis}, 2018.

\bibitem{HeywoodRannacher1982}
J.~G. Heywood and R.~Rannacher.
\newblock {Finite Element Approximation of the Nonstationary Navier–Stokes
  Problem. I. Regularity of Solutions and Second-Order Error Estimates for
  Spatial Discretization}.
\newblock {\em SIAM Journal on Numerical Analysis}, 19(2):275--311, apr 1982.

\bibitem{HeywoodRannacher1990}
J.~G. Heywood and R.~Rannacher.
\newblock {Finite-Element Approximation of the Nonstationary Navier–Stokes
  Problem. Part IV: Error Analysis for Second-Order Time Discretization}.
\newblock {\em SIAM Journal on Numerical Analysis}, 27(2):353--384, 1990.

\bibitem{HussainSchieweckTurek2013}
S.~Hussain, F.~Schieweck, and S.~Turek.
\newblock An efficient and stable finite element solver of higher order in
  space and time for nonstationary incompressible flow.
\newblock {\em Int. J. Num. Meth. Fluids}, 73(11):927--952, 2013.

\bibitem{MeidnerVexler2011}
D.~Meidner and B.~Vexler.
\newblock A priori error analysis of the petrov-galerkin crank-nicolson scheme
  for parabolic optimal control problems.
\newblock {\em SIAM J. on Control and Optimization}, 49(5):2183--2211, 2011.

\bibitem{Rang2008}
J.~Rang.
\newblock Pressure corrected implicit $\theta$-schemes for the incompressible
  navie-–stokes equations.
\newblock {\em Applied Mathematics and Computation}, 201:747--761, 2008.

\bibitem{rannacher1984}
R.~Rannacher.
\newblock {Finite Element Solution of Diffusion Problems with Irregular Data}.
\newblock {\em Numerische Mathematik}, 43(2):309--327, 1984.

\bibitem{ReuskenEsser2013}
A.~Reusken and P.~Esser.
\newblock Analysis of time discretization methods for a stokes equations with a
  nonsmooth forcing term.
\newblock {\em Numerische Mathematik}, 126(2):293--319, 2013.

\bibitem{Richter2017}
T.~Richter.
\newblock {\em Fluid-structure Interactions. Models, Analysis and Finite
  Elements}, volume 118 of {\em Lecture Notes in Computational Science and
  Engineering}.
\newblock Springer, 2017.

\bibitem{Schieweck2010}
F.~Schieweck.
\newblock A-stable discontinuous galerkin-petrov time discretization of higher
  order.
\newblock {\em Numer. Math.}, 18:25--57, 2010.

\bibitem{sohr2001}
H.~Sohr.
\newblock {\em The Navier-Stokes Equations}.
\newblock Birkh{\"a}user Verlag, 2001.

\bibitem{Temam1982}
R.~Temam.
\newblock {Behaviour at time t = 0 of the solutions of semi-linear evolution
  equations}.
\newblock {\em Journal of Differential Equations}, 43(1):73--92, 1982.

\end{thebibliography}

\end{document}